\numberwithin{equation}{section}
\let\div\relax
\DeclareMathOperator\div{div}
\DeclareMathOperator\curl{curl}
\DeclareMathOperator\Id{Id}
\DeclareMathOperator\supp{supp}
\DeclareMathOperator\dist{dist}
\newcommand\dd[0]{\partial}
\newcommand\grad[0]{\nabla}
\newcommand\Td[0]{{\mathbb T^3}}
\newcommand\Rd[0]{{\mathbb R^3}}
\newcommand\Zd[0]{{\mathbb Z^3}}
\newcommand\eq[1]{\begin{align}{#1}\end{align}}
\newcommand\eqn[1]{\begin{align*}{#1}\end{align*}}
\newcommand\N[1]{N_{#1}}
\newcommand\M[1]{M_{#1}}
\theoremstyle{plain}
\newtheorem{thm}{Theorem}[section]
\newtheorem{lem}[thm]{Lemma}
\newtheorem{proposition}[thm]{Proposition}
\theoremstyle{definition}
\newtheorem{define}[thm]{Definition}
\theoremstyle{remark}
\newtheorem{remark}[thm]{Remark}
\title{Non-Uniqueness of Smooth Solutions of the Navier--Stokes Equations from Critical Data}
\date{March 18, 2025}
\author{Matei P. Coiculescu\thanks{Department of Mathematics, Princeton University, Princeton, NJ 08544.\\Email: coiculescu@princeton.edu}\,\, and Stan Palasek\thanks{Department of Mathematics, Princeton University and School of Mathematics, Institute for Advanced Study, Princeton, NJ 08540. Email: palasek@ias.edu}}
\begin{document}

\maketitle

\begin{abstract}
We consider the Cauchy problem for the incompressible Navier--Stokes equations in dimension three and construct initial data in the critical space $BMO^{-1}$ from which there exist two distinct global solutions, both smooth for all $t>0$. One consequence of this construction is the sharpness of the celebrated small data global well-posedness result of Koch and Tataru. This appears to be the first example of non-uniqueness for the Navier--Stokes equations with data at the critical regularity. The proof is based on a non-uniqueness mechanism proposed by the second author in the context of the dyadic Navier--Stokes equations.
\end{abstract}

\section{Introduction}\label{introduction}

Consider the initial value problem for the incompressible Navier--Stokes equations on $\Td$ or $\Rd$:
\begin{equation}\begin{aligned}\label{NSE}
\dd_tu-\nu\Delta u+u\cdot\grad u+\grad p=0\\
\div u=0\\
u(0,x)=U^0(x)
\end{aligned}\end{equation}
where the given initial data $U^0(x)$ is divergence-free, and $u(x,t)$ and $p(x,t)$ are the unknown velocity and pressure fields. By rescaling, we assume without loss of generality that $\nu=1$. Initiated in 1934 by the pioneering work of Leray~\cite{leray1934mouvement}, there is now a well-developed theory of local well-posedness of \eqref{NSE} from initial data in many reasonable classes of functions, as well as global well-posedness from data that is sufficiently small in certain scale-invariant norms; see, for instance, \cite{lemarie2002recent} for an extensive survey. The strongest result in this direction is the following well-known theorem of Koch and Tataru~\cite{koch2001well}:

\begin{thm}\label{KTtheorem}
There exists $\epsilon>0$ such that if $U^0$ is divergence-free with\footnote{See \S\ref{wellandillposednesssubsection} for the definition of $BMO^{-1}$ and further discussion on why this is a natural setting for these questions.} $\|u\|_{BMO^{-1}}<\epsilon$, then there exists a unique global-in-time solution that is regular for $t>0$.
\end{thm}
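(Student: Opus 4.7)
The plan is to apply a Picard fixed-point argument to the mild formulation of \eqref{NSE} in a scale-invariant Banach space $X$. Specifically, one solves
\eq{
u(t) = e^{t\Delta}U^0 - B(u,u)(t), \qquad B(u,v)(t) := \int_0^t e^{(t-s)\Delta}\mathbb{P}\,\div(u\otimes v)(\cdot,s)\,ds,
}
where $\mathbb{P}$ is the Leray projector. The abstract Picard lemma then yields a unique solution in a ball of $X$ of radius $O(\|U^0\|_{BMO^{-1}})$, provided (i) $\|e^{t\Delta}U^0\|_X \lesssim \|U^0\|_{BMO^{-1}}$ and (ii) $B:X\times X\to X$ is bounded.

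The correct choice of $X$ is the space with norm
\eq{
\|u\|_X := \sup_{t>0}t^{1/2}\|u(\cdot,t)\|_{L^\infty(\Rd)} + \sup_{\substack{x\in\Rd \\ r>0}}\left(\frac{1}{|B(x,r)|}\int_0^{r^2}\!\!\int_{B(x,r)}|u(y,s)|^2\,dy\,ds\right)^{1/2},
}
whose two seminorms are both invariant under the Navier--Stokes rescaling $u(x,t)\mapsto\lambda u(\lambda x,\lambda^2 t)$. Estimate (i) is then immediate: the second term is essentially the Carleson-measure characterization of $BMO^{-1}$, while $t^{1/2}\|e^{t\Delta}f\|_{L^\infty}\lesssim\|f\|_{BMO^{-1}}$ is a routine heat-semigroup smoothing bound.

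The main obstacle is the bilinear estimate (ii). Using only the kernel bound $\|e^{(t-s)\Delta}\mathbb{P}\,\div\|_{L^\infty\to L^\infty}\lesssim(t-s)^{-1/2}$ and $\|u(s)\otimes v(s)\|_{L^\infty}\le\|u\|_X\|v\|_X\,s^{-1}$ leads to the integral $\int_0^t (t-s)^{-1/2}s^{-1}ds$, which diverges at $s=0$; some of the $L^\infty$ control must therefore be traded for the Carleson control in a time-localized way. For the Carleson half of the norm I would fix a parabolic cylinder $Q=B(x_0,r)\times(0,r^2)$, decompose $u\otimes v = \mathbf{1}_{2Q}(u\otimes v) + \mathbf{1}_{(2Q)^c}(u\otimes v)$, control the local part by $L^2$-boundedness of the Calder\'on--Zygmund-type singular integral $e^{(t-s)\Delta}\mathbb{P}\,\div$ (absorbing the factor $\div$ as a half-derivative of the parabolic kernel), and control the far-field part by direct pointwise kernel bounds weighted against the $L^\infty$ half of $\|\cdot\|_X$. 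Handling the near-diagonal contribution compatibly with the parabolic Carleson structure is the most delicate step, and is the technical heart of the argument; it demands a real-variable analysis of the Oseen kernel adapted to parabolic balls.

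Regularity for $t>0$ then follows by bootstrap: the bound $\|u(t)\|_{L^\infty}\lesssim t^{-1/2}$ places $u$ in a subcritical Serrin class on $[t_0,\infty)$ for any $t_0>0$, so standard parabolic regularity theory yields $u\in C^\infty(\Rd\times(0,\infty))$. Uniqueness in the small ball comes directly from the contraction in $X$; extending uniqueness to the full natural class is a separate (easier) continuity argument.
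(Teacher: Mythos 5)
The paper does not prove Theorem~\ref{KTtheorem}; it is quoted as background and attributed to Koch and Tataru \cite{koch2001well}, so there is no internal proof to compare against. Your sketch is nonetheless a faithful outline of the strategy of the original Koch--Tataru argument: mild formulation, Picard contraction in the scale-invariant path space (your $X$ coincides with the paper's $X_{KT}$), the linear estimate \(\|e^{t\Delta}U^0\|_X\lesssim\|U^0\|_{BMO^{-1}}\) from the Carleson characterization of $BMO^{-1}$ plus heat-kernel smoothing, and the bilinear estimate as the single nontrivial obstacle. You correctly diagnose why the naive bound \(\int_0^t(t-s)^{-1/2}s^{-1}\,ds\) fails and that one must trade $L^\infty$ control against Carleson control, and the near/far decomposition over a parabolic cylinder is the right shape: Koch--Tataru handle the local part by $L^2$-boundedness of a parabolic singular integral (built from the Oseen kernel) and the far part by pointwise kernel decay.

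The honest caveat is that what you have written is an outline, not a proof: the bilinear estimate is essentially the entire content of the theorem, and your paragraph stops exactly where the real-variable analysis of the Oseen tensor adapted to parabolic balls has to begin. A secondary point: since the present paper works on $\Td$, one would need the local (periodic) version of the space, denoted $bmo^{-1}$, which the authors note in \S\ref{functionspacessection} coincides with $BMO^{-1}$ in the periodic setting; your sketch is written on $\Rd$ and would need that observation to apply here. Neither of these is a flaw in your strategy, but the proposal should be understood as a correct plan rather than a complete argument.
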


To this point, it was unknown whether any of the positive results for small data are sharp in the sense that global well-posedness can fail for large initial data. This question was asked, for instance, by Koch and Tataru in \cite{koch2001well} in relation to Theorem~\ref{KTtheorem}. We answer their question in the affirmative:
\begin{thm}\label{maintheorem}
There exists divergence-free initial data $U^0\in BMO^{-1}$ such that the Cauchy problem \eqref{NSE} admits two distinct global solutions
\eqn{
u^{(1)},\,u^{(2)}\in C_{t,x}^\infty((0,\infty)\times\Td)\cap L^\infty([0,\infty);BMO^{-1}(\Td))\cap C^0([0,\infty);\dot W^{-1,p}(\Td))
}
for all $p<\infty$.
\end{thm}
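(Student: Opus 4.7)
The plan is to import the non-uniqueness mechanism of the second author from the dyadic Navier--Stokes equations into the continuous setting via a multi-scale, frequency-localized construction adapted to the critical space $BMO^{-1}$. Since Theorem~\ref{KTtheorem} forbids non-uniqueness for small $BMO^{-1}$ data, the datum $U^0$ must be critical in size, which naturally suggests a construction supported on a lacunary sequence of dyadic scales. Because both solutions are required to be smooth for $t>0$, the two branches cannot be distinguished by any low-regularity defect; they must separate through honest dynamics, and the cleanest way to produce such a separation at the linearized level is to transplant an exponentially-growing unstable mode from the dyadic model to the full PDE.

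\textbf{Construction.} I would fix a very lacunary sequence $\lambda_1\ll\lambda_2\ll\cdots$ and divergence-free frequency-localized profiles $\phi_n$ concentrated near $|\xi|\sim\lambda_n$, with amplitudes tuned so that each $\|\phi_n\|_{BMO^{-1}}\sim 1$; critical scaling forces $\|\phi_n\|_{L^\infty}\sim\lambda_n$, and the common initial datum $U^0=\sum_n\phi_n$ lies in $BMO^{-1}$ with norm independent of the number of shells but well beyond the Koch--Tataru threshold. The two solutions are written as $u^{(i)}=\bar u+w^{(i)}$, where $\bar u$ is a common multi-scale background (heuristically the heat evolution of $U^0$ corrected shell-by-shell into an approximate NSE solution), and $w^{(i)}$ tracks one of two distinct branches of the dyadic mechanism: on each shell, the dyadic model emits two solutions from the same shell-amplitudes by exciting an unstable direction, and this bifurcation is transported into the PDE by requiring the frequency-$\lambda_n$ piece of $w^{(i)}$ to follow the corresponding dyadic branch, with a small corrector absorbing the mismatch between the tree-type coupling of the dyadic model and the true non-local NSE nonlinearity.

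\textbf{Mild solutions, passage to the limit, and the main obstacle.} With $\bar u$ and a candidate perturbation profile $\psi$ (the unstable direction transplanted from the dyadic model) in hand, I would carry out a shooting/continuity argument: solve the perturbed NSE for $w^{(i)}$ starting from a small multiple of $\psi$ at a positive time $t_0$, use dissipation at each shell and the lacunarity of $\{\lambda_n\}$ to close estimates uniformly in $t_0$, and extract a weak limit as $t_0\to 0$. This produces two solutions with $u^{(1)}(0)=u^{(2)}(0)=U^0$ in $BMO^{-1}$ but with $u^{(1)}\ne u^{(2)}$ for $t>0$. The claimed regularity $C^\infty_{t,x}((0,\infty)\times\Td)$ follows from parabolic smoothing once $L^\infty_tBMO^{-1}$ control is established, and the weaker statement $u^{(i)}\in C^0([0,\infty);\dot W^{-1,p})$ for $p<\infty$ follows from the mild-solution formulation together with the fact that this space is strictly weaker than $BMO^{-1}$. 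The chief obstacle is quantitative: the genuine non-local cross-shell interactions in the full NSE must be dominated by dissipation on each shell with errors that sum across the infinite cascade without destroying the $BMO^{-1}$ bound, and the dyadic branching must persist globally in time despite this coupling --- this is precisely where the lacunarity of $\{\lambda_n\}$ and the critical tuning of amplitudes have to be delicately balanced. A secondary difficulty is verifying the vanishing of $w^{(1)}-w^{(2)}$ as $t\to 0$ in a topology (at least $\dot W^{-1,p}$) strong enough to guarantee that the two constructed objects genuinely share the initial datum.
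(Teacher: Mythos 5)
Your sketch correctly identifies several necessary ingredients --- a lacunary multi-scale construction with critically tuned amplitudes, the impossibility of small data, the need for an ansatz-plus-perturbation decomposition, and a subcritical fixed-point argument with careful treatment of the initial trace --- but the core non-uniqueness mechanism you propose is not the one the paper uses, and I do not think it can be made to work as stated.

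You posit that the bifurcation comes from ``exciting an unstable direction'' transplanted from the dyadic model, i.e., a linearized instability around a common background $\bar u$, closed via a shooting argument from $t_0>0$ and a weak limit $t_0\to 0$. This is essentially the Jia--\v Sver\'ak mechanism, and it runs into the obstruction discussed in \S\ref{previousworksection}: one would need an explicit unstable self-similar profile, and no such profile is known without an external force. The paper's mechanism is fundamentally different and contains no linearized instability at all. The initial datum is built so that \emph{each} frequency shell $V_k^0=\curl\curl\psi_k^0$ admits \emph{two} locally consistent continuations: a heat-dominated evolution $v_k$ decaying on the time scale $\N{k}^{-2}$, and an inverse-cascade-dominated evolution $\overline v_k$ in which the self-interaction of the next shell annihilates $V_k^0$ on the much faster time scale $\N{k+1}^{-2}$. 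Because a mode that is rapidly annihilated has no time to act on the shell below it, the only self-consistent global patterns are the two alternating sequences $v^{(1)}=v_0+\overline v_1+v_2+\cdots$ and $v^{(2)}=\overline v_0+v_1+\overline v_2+\cdots$. The two solutions are thus $u^{(i)}=v^{(i)}+w^{(i)}$ with \emph{already distinct} leading parts, not a common background plus two tracked perturbations; distinctness is then read off at a single time $t_0=\N{0}^{-2}$ from the disparate decay rates of $v_0$ versus $\overline v_0$, not from an accumulated instability.

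This distinction is not cosmetic, because realizing the bistability requires structure your sketch does not supply. One must arrange $\overline v_k(0)=v_k(0)$ exactly, which forces a recursive constraint of the form $V_k^0\approx C\N{k+1}^{-2}\,P_{\sim\N{k}}\mathbb P\div(a_{k+1}^2\,\theta\otimes\theta)$; solving this requires Mikado-type building blocks together with a Nash decomposition of symmetric tensors (Lemma~\ref{nash-lemma}), and the recursion is closed in amplitude precisely because each step involves a $1/2$-power that kills accumulating constants. Frequency localization alone also does not give $BMO^{-1}$; you would land in $B^{-1}_{\infty,\infty}$. The paper upgrades to $BMO^{-1}$ by shrinking the \emph{spatial} supports geometrically, $|\widetilde\Omega_k|\le 2^{-k}|\Td|$, via the nested-pipe construction and the improved H\"older inequality (Lemma~\ref{BVlemma}, Lemma~\ref{supportlemma}). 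Finally, the perturbations $w^{(i)}$ are obtained not by shooting from $t_0>0$ but by a fixed point with zero data in a slightly subcritical weighted space, using the semigroup $S^{(i)}(t,t')$ of the linearization around $v^{(i)}$; the key technical points are that the residuals $F^{(i)}$ can be made subcritically small ($\|F^{(i)}(t)\|_{L^\infty}\lesssim t^{-1+\alpha}$, Proposition~\ref{Festimateproposition}) and that $S^{(i)}$ loses only a factor $(t/t')^\epsilon$ relative to the heat semigroup (Proposition~\ref{semigroupestimateproposition}), both of which depend on the lacunarity being strong enough. Your shooting argument, by contrast, would have to rule out the perturbation collapsing onto the other branch as $t_0\to 0$, and you offer no mechanism for that.
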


\begin{remark}
The solutions constructed in the proof of Theorem~\ref{maintheorem} belong precisely to the same class as those constructed by Koch and Tataru; the only difference is that they are not perturbative around zero. Indeed, one can verify that our solutions obey the following bounds:
\eqn{
\|u\|_{X_{KT}}&\coloneqq \sup_{t>0}t^\frac12\|u(t)\|_{L^\infty(\Td)}+\sup_{x_0\in\Td}\sup_{R>0}\left(R^{-3}\int_0^{R^2}\int_{B(R)}|u(x,t)|^2dxdt\right)^{1/2}<\infty.
}
In other words, they lie in the natural path space corresponding to solutions of \eqref{NSE} (or even the heat equation) with initial values in $BMO^{-1}$. It was shown in \cite{koch2001well} that the bilinear operator appearing in the mild form of the Navier--Stokes equations is bounded on $X_{KT}$, which allows one to use Kato's method to construct global solutions from small data.
\end{remark}

\begin{remark}
The $BMO^{-1}$ norm is critical with respect the Navier--Stokes scaling symmetry. In other words, if $u(t,x)$ is a global solution of \eqref{NSE} with initial data $U^0$, then
\eq{\label{rescaling}
u_\lambda(x,t)=\lambda u(\lambda x,\lambda^2t),\quad U_\lambda^0(x)=\lambda U^0(\lambda x)
}
is also a solution for any $\lambda>0$, and furthermore
\eqn{
\|u_\lambda\|_{X_{KT}}=\|u\|_{X_{KT}},\quad\|U_\lambda^0\|_{BMO^{-1}}=\|U^0\|_{BMO^{-1}}.
}
Critical spaces are a natural setting for the questions we consider because they are at the threshold of the well-posedness theory; there are many theorems to the effect that certain classes of critical solutions are well-behaved \cite{escauriaza2003l_3,gallagher2016blow,barker2018uniqueness,barker2020localized,chemin1999theoremes,auscher2004stability,miura2005remark,may2010extension,marchand2007remarques,lemarie2007uniqueness,zhang2009uniqueness,hou2024regularity}.
To our knowledge, Theorem~\ref{maintheorem} is the first ill-posedness result in a critical space strong enough to support well-posedness for small data. Moreover, it appears to be the first example of non-uniqueness for the Navier--Stokes equations from data in \emph{any} critical space that does not rely on the presence of an external force. See \S\ref{previousworksection} for a comparison of our solutions to those constructed previously in the literature.
\end{remark}

\begin{remark}
Let us emphasize that the solutions constructed in Theorem~\ref{maintheorem} are not in the Leray--Hopf class due to the fact that the initial velocity is not in $L^2(\Td)$. The construction here uses a simplified version of the mechanism proposed by the second author in \cite{palasek2024non} and this simplification requires the data to have infinite energy. If the full mechanism from \cite{palasek2024non} could be implemented, we expect it would yield solutions of \eqref{NSE} in a similar class as those of Jia and \v Sver\'ak~\cite{jia2014local,jia2015incompressible} (but without exact self-similarity), namely Leray--Hopf solutions belonging to improved critical spaces at least as good as $B^{-1+3/p}_{p,\infty}$ for $p>3$. There does not appear to be any obstruction in principle to realizing the mechanism proposed in \cite{palasek2024non}, but there exists a substantial technical challenge in finding the correct building blocks that embed the behavior of the Obukhov dyadic model (cf. the discussion in Section 1.3 of \cite{tao2016finite}).
\end{remark}

\begin{remark}
The initial data $U^0$ is smooth outside of a measure zero set $\Sigma\subset\Td$. Indeed, it can be written as $U^0=\sum_{k=1}^\infty V_k^0$ where each component $V_k^0$ is smooth on $\Td$ and the supports are monotone, $\supp V_k^0\subset \supp V_{k-1}^0$. Thus $\Sigma\subset\bigcap_{k\geq0}\tilde\Omega_k$, with $\tilde\Omega_k$ as in Definition~\ref{mikadodefinition}. The claim then follows because $|\tilde\Omega_k|\leq 2^{-k}$ from Lemma~\ref{supportlemma}.
\end{remark}

\begin{remark}
For convenience, we carry out the construction on the torus $\Td=\Rd/(2\pi\mathbb Z)^3$. We expect that the argument extends with straightforward modifications to the Navier--Stokes equations posed on $\Rd$ with decay at spatial infinity. In principle, our methods extend to the two-dimensional case as well (which is likewise covered by the small data well-posedness results of~\cite{koch2001well}, see also \cite{germain2006equations}), but there are technical difficulties similar to those faced in the approach to Onsager's conjecture due to the geometry of Mikado flows. We leave open the problem of non-unique solutions with critical data on $\mathbb T^2$ and $\mathbb R^2$.
\end{remark}

After preparation of the manuscript, Cheskidov, Zeng, and Zhang posted a preprint~\cite{cheskidov2025global} in which they use convex integration to construct wild weak solutions of the Navier--Stokes and MHD from \emph{any} $H^\frac12$ initial data. This is an interesting contribution that implies the lack of weak-strong uniqueness of dissipative solutions. We wish to emphasize that the solutions produced in~\cite{cheskidov2025global} do not overlap with Theorem~\ref{maintheorem} at all; while they have the advantage that the data is in $L^2(\Rd)$, they do not appear to possess any further regularity for $t>0$, nor does the energy inequality hold. The present paper is concerned with the question of well-posedness of mild solutions, which is orthogonal to the setting treated in \cite{cheskidov2025global}. This can be seen from the fact that they construct distinct solutions even for \emph{small} data in $H^\frac12(\Rd)$.

\subsection{Well-posedness and Ill-posedness at the Critical Regularity}\label{wellandillposednesssubsection}

We recall the scale of Besov spaces $B^s_{p,q}$ equipped with the norms $\|u\|_{B^s_{p,q}}=\big\|N^s\|P_Nu\|_{L^p}\big\|_{\ell^q_N}$ where $P_N$ are the Littlewood--Paley projections defined in \S\ref{littlewoodpaleysection}. The spaces $B^{-1+3/p}_{p,q}$ are critical for the three-dimensional Navier--Stokes equations for all $p,q\in[1,\infty]$, although they primarily play a role in the negative regularity regime $p>3$. They are related to the other well-known critical spaces\footnote{We focus on this hierarchy of critical spaces for simplicity of the exposition. Certainly there are many others of note: weak $L^3$, Morrey spaces, multiplier spaces, and more that can be found in \cite{lemarie2002recent}.} as follows:
\eqn{
H^{\frac12}\subset L^3\subset B^{-1+3/p}_{p,\infty}\subset BMO^{-1}\subset B^{-1}_{\infty,\infty}
}
where $3<p<\infty$. Global well-posedness from small data in the first three spaces was proved by Fujita--Kato~\cite{fujita1964navier}, Kato~\cite{kato1984strong}, Cannone~\cite{cannone1994ondelettes}, and Planchon~\cite{planchon1998asymptotic}, at which point the belief was that the natural endpoint of the hierarchy is the space $B_{\infty,\infty}^{-1}$ which is so large it contains \emph{all} translation-invariant spaces of tempered distributions at the critical regularity. The insight of Koch and Tataru~\cite{koch2001well} was that $BMO^{-1}$, defined as the distributions with
\eqn{
\|u\|_{BMO^{-1}}\coloneqq \sup_{R>0}\sup_{x_0\in\Rd}\left(\int_0^{R^2}\!\int_{B(x_0,R)}|e^{t\Delta}u|^2dxdt\right)^\frac12<\infty,
}
falls in between $B_{p,\infty}^{-1+3/p}$ and $B_{\infty,\infty}^{-1}$ and is such that the Navier--Stokes bilinear operator
\eqn{
u\mapsto -\int_0^te^{(t-t')\Delta}\mathbb P\div u(t')\otimes u(t')dt'
}
is bounded, allowing one to implement Kato's method for constructing mild solutions from small data. Figure~\ref{table} summarizes the current understanding of well-posedness of \eqref{NSE} in the critical setting and how our main theorem fits into that picture.

\begin{figure}\label{table}\begin{center}

{\small\renewcommand{\arraystretch}{1.7}
\begin{tabular}{r|c|c|c|}
  \multicolumn{1}{l}{} & 
  \multicolumn{1}{c}{$H^{1/2}$, $L^3$, $B_{p,\infty}^{-1+3/p}$} & 
  \multicolumn{1}{c}{$BMO^{-1}$} & 
  \multicolumn{1}{c}{$B^{-1}_{\infty,\infty}$} \\
  \cline{2-4}
  Small data  g.w.p. & \multirow{2}{2.8cm}{\parbox{2.8cm}{\centering Yes; Fujita--Kato, etc. \cite{fujita1964navier,kato1984strong,cannone1994ondelettes}}} & Yes; Koch--Tataru~\cite{koch2001well} & \multirow{3}{2.8cm}{\parbox{2.8cm}{\centering No; Bourgain--Pavlovi\'c~\cite{bourgain2008ill}}} \\
  \cline{3-3}
  Large data l.w.p. &  & \multirow{2}{2.8cm}{\parbox{2.8cm}{\centering No; Theorem~\ref{maintheorem}}} &  \\
  \cline{2-2}
  Large data g.w.p. & Open &  &  \\
  \cline{2-4}
\end{tabular}}

\caption{For initial data in various critical spaces, one can ask whether the Navier--Stokes equations are locally or globally well-posed. In some cases, the answer depends on the size of the data. For the ``open'' case, the answer may depend on the exact space; for instance a negative answer for $B_{p,\infty}^{-1+3/p}$ is suggested by \cite{jia2014local,jia2015incompressible,guillod2023numerical}, while a positive or negative answer in $H^{1/2}$ or $L^3$ would resolve the Clay problem.}
\end{center}
\end{figure}

One can argue that $BMO^{-1}$ is the largest reasonable critical space with this property. Indeed, to make sense of the nonlinearity, even in the sense of distributions, $u$ must be $L^2_{loc}$ in spacetime. This must be true in particular for the zeroth Picard iterate $e^{t\Delta}u_0$, so we need $\|e^{t\Delta}u_0\|_{L_{t,x}^2([0,1]\times B(1))}=C<\infty$. Due to the symmetries of \eqref{NSE}, this bound must be invariant under spatial translation as well as the scaling defined in \eqref{rescaling}. Thus, one is led precisely to the above definition of $BMO^{-1}$, corresponding to the Carleson formulation of $BMO$.

In critical spaces weaker than $BMO^{-1}$ in which Theorem~\ref{KTtheorem} does not apply, there has been progress in showing ill-posedness in the form of norm inflation. The first such result was due to Bourgain and Pavlovi\'c~\cite{bourgain2008ill}, who showed that arbitrarily small data in $B^{-1}_{\infty,\infty}$ can become arbitrarily large in a short time. In other words, the data-to-solution map is not continuous at $0$ in the $B^{-1}_{\infty,\infty}$ topology. Germain~\cite{germain2008second} ruled out the fixed point argument for constructing mild solutions from data in $B^{-1}_{\infty,q}$ for $q>2$. Explicit ill-posedness in this class was proven by Yoneda~\cite{yoneda2010ill} following the ideas of \cite{bourgain2008ill}. Later, Wang~\cite{wang2015ill} proved the surprising result that norm inflation from small data is possible in the spaces $\dot B_{\infty,q}^{-1}$ even for $q\in[1,2]$, which are continuously embedded in $BMO^{-1}$. Thus, although these solutions are globally regular and stay small in the $BMO^{-1}$ norm, they nonetheless become large in the stronger norm $B_{\infty,q}^{-1}$. The fact that the presence of norm inflation is not ``monotone'' in the class of data is interesting and suggests that non-uniqueness (or the lack thereof) may be a better benchmark for the well-posedness of \eqref{NSE}. Let us also mention a slight strengthening of these results in a logarithmically subcritical Besov space due to Cui~\cite{cui2015sharp}.

When one allows a force, recent works by C\'ordoba, Mart\'inez-Zoroa, and Zheng demonstrate that finite-time blow-up is possible for Euler, hypodissipative Navier--Stokes, and other related models. For instance, see \cite{cordoba2023blow} and \cite{cordoba2024finite}. These constructions have a similar spirit to the one in the present work in the sense that they build an explicit multi-scale initial datum whose evolution forward in time can be reasonably well-understood.

\subsection{Known Approaches towards Non-Uniqueness}\label{previousworksection}

Up to this point, there have been two strategies for producing non-unique solutions to \eqref{NSE} and other equations of fluid dynamics. Here we briefly summarize the state of the art.

\subsubsection{Convex Integration}

Inspired by Nash's $C^1$ isometric embedding theorem~\cite{nash1954c}, De Lellis and Sz\'ekelyhidi introduced convex integration to the field of mathematical fluid dynamics in their seminal papers ~\cite{de2009euler,de2013dissipative}. Whereas for the Euler equations these methods produced essentially sharp results (culminating, for instance, in the resolution of Onsager's conjecture~\cite{isett2018proof}), for the Navier--Stokes equations there remain some persistent gaps between the regularity reached by convex integration schemes and the known well-posedness results. This area has been highly active so we refer the reader to \cite{buckmaster2020convex} for a more thorough survey.

The first example of non-unique weak solutions of \eqref{NSE} was constructed in the space $C_tH_x^{\epsilon}$ for $\epsilon>0$ small in the breakthrough work of Buckmaster and Vicol~\cite{buckmaster2019nonuniqueness}. For comparison, the critical space in this scale in dimension three is $C_tH_x^{1/2}$, and Leray--Hopf solutions additionally lie in $L_t^2H_x^1$. Tao~\cite{taoblognavierstokesconvexintegration} observed that one can reach regularity $C_tH_x^{\frac12-\epsilon}$ by taking the dimension $n$ large depending on $\epsilon$, but then the critical Sobolev space is $C_tH_x^{\frac n2-1}$, which is far out of reach. Among the subsequent convex integration constructions following \cite{buckmaster2019nonuniqueness}, for instance \cite{buckmaster2021wild,luo2020non,cheskidov2022sharp,cheskidov20232}, the sharpest in terms of the scaling of the space are those of Cheskidov and Luo: first in \cite{cheskidov2022sharp}, they construct solutions in $L_t^{2-\epsilon}L_x^\infty(\mathbb R^n)$ for $n\geq2$ and all $\epsilon>0$. Later in \cite{cheskidov20232}, they exhibit solutions in the space $C_tL_x^{2-\epsilon}(\mathbb R^2)$, where $\epsilon>0$ can be taken arbitrarily small. For comparison, $L_t^2L_x^\infty$ and $L_t^\infty L_x^n$ are critical in dimension $n$; thus, their solutions can get arbitrarily close to certain critical spaces on the Prodi--Serrin--Ladyzhenskaya scale and possess some additional favorable properties outside of a small set of times.

Unfortunately, convex integration at or above critical regularity appears out of reach. Indeed, the nonlinearity must be at least as large as the Laplacian in order for the error correction to function, so there is no room to lose anything during the iteration. Moreover, convex integration is only known to be applicable in settings where the solution space has flexibility, which is certainly not the case for solutions that immediately regularize like the ones in Theorem~\ref{maintheorem}, nor (one might expect) for Leray--Hopf solutions, which exhibit weak-strong uniqueness. There are additional difficulties related to finding sufficiently intermittent building blocks which present an obstacle to, among other things, proving the sharpness of the full Prodi--Serrin--Ladyzhenskaya scale.

\subsubsection{The Jia--\v Sver\'ak Program}

The other known strategy for demonstrating non-uniqueness was set forth by Jia and \v Sver\'ak in \cite{jia2014local,jia2015incompressible}. They propose to consider \eqref{NSE} in self-similar coordinates $\xi=x/t^{1/2}$, $\tau=\log t$ with the ansatz $u(t,x)=t^{-1/2}U(\xi,\tau)$. Thus the question of non-uniqueness for $u$ at $t=0$ is transferred to $U$ at $\tau=-\infty$, which would be settled if one can find a steady solution $U(\xi,\tau)=\overline U(\xi)$ that is nonlinearly unstable in the self-similar dynamics. This scenario seems plausible in light of compelling numerical evidence by Guillod and \v Sver\'ak~\cite{guillod2023numerical}. Recently, Albritton, Bru\'e, and Colombo~\cite{albritton2022non} used this framework to construct non-unique Leray--Hopf solutions to the forced Navier--Stokes equations by building a profile $\overline U$ out of the unstable vortex constructed by Vishik in \cite{vishik1,vishik2}. The force is necessary for their construction because $\overline U$ does not satisfy the Navier--Stokes in self-similar coordinates. Removing the force appears extremely difficult because the only suspected unstable self-similar profile is the one numerically discovered in \cite{guillod2023numerical}. It follows that one would have to verify spectral properties of the equation linearized around a numerical profile.

\subsection{Strategy of the Proof}\label{strategysubsection}

The non-uniqueness mechanism that leads to Theorem~\ref{maintheorem} is based on the one proposed by the second author in the context of the Obukhov dyadic model in~\cite{palasek2024non}, with certain simplifications to make the construction feasible in the full PDE setting. Let us now sketch the strategy of the proof, neglecting some details for the purpose of the exposition.

\subsubsection{Non-Uniqueness Mechanism}

The initial data\footnote{Let us mention for the reader trying to reconcile this sketch with the proof in the sequel that we shall not in practice work with $V_k^0$; for technical reasons the potentials $\psi_k^0$ satisfying $V_k^0=\curl\curl\psi_k$ are more favorable.} is constructed as a lacunary series $U^0=\sum_{k\geq0}V_k^0$ where each $V_k^0$ is (approximately) localized in Fourier space to a band around $|\xi|=\N{k}$, with $\N{k}$ a rapidly growing sequence of frequency scales. The essence of the construction is the following claim: there exists a choice of the $(V_k^0)_{k\in\mathbb  N}$ such that at each frequency level $k$, there exist two evolutions forward in time that are consistent with \eqref{NSE}, up to a small error:

\begin{itemize}
    \item \emph{The Heat-Dominated Flow}: let $v_k(x,t)$ be the flow of the data under the heat equation, namely
    \begin{equation}\begin{aligned}\label{heatdominated}
    \dd_tv_k-\Delta v_k=l.o.t.\\
    v_k|_{t=0}=V_k^0
    \end{aligned}\end{equation}
    where we add some lower order terms as a matter of convenience. Clearly $\|v_k(t)\|_{L^\infty}$ decays exponentially on the time scale $\N{k}^{-2}$.
    \item \emph{The Inverse Cascade-Dominated Flow}: let $\overline v_k(x,t)$ be the flow of the data under the equation
    \begin{equation}\begin{aligned}\label{cascadedominated}
    \dd_t\overline v_k+P_{\sim\N{k}}\mathbb P\div v_{k+1}\otimes v_{k+1}=0\\
    \overline v_k|_{t=0}=V_k^0
    \end{aligned}\end{equation}
    where $v_{k+1}$ is the ``heat-dominated flow'' emanating from the data $V_{k+1}^0$ as described above, and $P_{\sim \N{k}}$ is a Littlewood--Paley-type projection to a frequency shell around $\N{k}$. Crucially, by a particular choice of $V_{k+1}^0$, we can arrange that $\|\overline v_k(t)\|_{L^\infty}$ decays exponentially to zero on the time scale $\N{k+1}^{-2}$.
\end{itemize}

Taking for granted the assertions made so far (which, indeed, most of the paper will be occupied with justifying), we can write down two distinct \emph{approximate} solutions of \eqref{NSE} with data $U^0=\sum_{k\geq0}V_k^0$:
\eq{\label{v1v2def}
v^{(1)}\coloneqq v_0+\overline v_1+v_2+\overline v_3+\cdots\quad\text{and}\quad v^{(2)}\coloneqq \overline v_0+v_1+\overline v_2+v_3+\cdots.
}
The distinctness is immediate from the fact that $v_k$ and $\overline v_k$ have different decay rates.

Let us give a more heuristic description of how the non-uniqueness arises. Fixing $k\geq1$, one should interpret \eqref{cascadedominated} as asserting that $V_{k+1}^0$ is capable of using its nonlinear self-interaction to ``annihilate'' the mode below that has initial data $V_k^0$, and this annihilation occurs on time scale $\N{k+1}^{-2}$. Likewise, $V_k^0$ is capable of annihilating the mode at $V_{k-1}^0$ on time scale $\N{k}^{-2}$. However, these two events are incompatible because the $k$th mode would vanish on a time scale (i.e., $\N{k+1}^{-2}$) much shorter than the time it needs to act on $v_{k-1}$ (i.e., $\N{k}^{-2}$). Thus, if $V_k^0$ is in fact annihilated by $V_{k+1}^0$ (i.e., its continuation $\overline v_k(t)$ satisfies \eqref{cascadedominated}), then there are no nonlinear dynamics to act on $V_{k-1}^0$, which then evolves as the heat-dominated flow $v_{k-1}(t)$ according to \eqref{heatdominated}.

The conclusion of the discussion above is that these dynamics are only self-consistent if \eqref{heatdominated} and \eqref{cascadedominated} occur at alternating sequences of $k$'s, which justifies the choice \eqref{v1v2def}. The non-uniqueness then is a consequence of the fact that \eqref{heatdominated} can occur at \emph{either} every odd $k$ \emph{or} every even $k$.

\subsubsection{Structure of the Principal Part}

An obvious difficulty is that even if we can arrange \eqref{heatdominated}--\eqref{cascadedominated}, there remains a plethora of nonlinear interactions between the $v_k$'s and $\overline v_k$'s that need to be small for us to claim these as approximate solutions. It turns out that if the data is chosen appropriately, the terms appearing in \eqref{heatdominated}--\eqref{cascadedominated} are the only ones that are non-perturbative. Toward this end, we borrow certain elements from convex integration constructions\footnote{We wish to emphasize that despite having some elements in common with convex integration, the construction here is not in any sense an example of convex integration. Indeed, we only have freedom to choose the data. Outside of the alternative between \eqref{heatdominated} and \eqref{cascadedominated}, for positive times there is no flexibility and the solutions are smooth.} where a central difficulty is also minimizing nonlinear interactions between solution components at different frequencies.

Now we give the rough ideas for how to select data obeying \eqref{heatdominated}--\eqref{cascadedominated} while keeping all other interactions minimized. The initial data $V_k^0(x)$ is (very roughly, and locally) of the form $a_k(x)\theta\sin(\N{k}x\cdot\eta)$ for some fixed $\eta,\theta\in\Zd$ with $\eta\cdot\theta=0$, where $a_k(x)$ is a scalar coefficient that is principally supported at frequencies $|\xi|\ll\N{k}$. Then, according to \eqref{heatdominated}, we should take $v_k(x,t)=a_k(x)\theta\sin(\N{k}x\cdot\eta)\exp(-\N{k}^2|\eta|^2t)$ up to various small errors. Having defined the $(v_k)_{k\in\mathbb N}$ in this way, the requirement \eqref{cascadedominated} becomes
\eqn{
\dd_t\overline v_k=-P_{\sim\N{k}}\mathbb P\div\Big(a_{k+1}^2\theta\otimes\theta\sin^2(\N{k+1}x\cdot\eta)\Big)\exp(-2\N{k+1}^2|\eta|^2t)\\
\overline v_k|_{t=0}=V_k^0.
}
One recalls that $a_{k+1}^2(x)$ is mainly localized to frequencies $\ll \N{k+1}$, while $\sin^2(\N{k+1}x\cdot\eta)$ is supported at frequencies $|\xi|=0$ and $|\xi|=2|\eta|\N{k+1}$. Thus the interaction of $a_{k+1}^2$ with the high-frequency part of $\sin^2$ cannot concentrate significantly in the shell $|\xi|\sim \N{k}$, so only the zeroth mode of $\sin^2$ contributes. This lets us reduce \eqref{cascadedominated} by integrating in time to 
\eqn{
\overline v_k(t)=V_k^0-C\N{k+1}^{-2}P_{\sim\N{k}}\mathbb P\div(a_{k+1}^2\theta\otimes\theta)(1-\exp(-2\N{k+1}^2|\eta|^2t))
}
up to errors, where $C$ is a computable constant. Thus, our claim that $\|\overline v_k(t)\|_{L^\infty}$ decays on time scale $\N{k+1}$ is achievable if $a_{k+1}$ is chosen so that
\eq{\label{simplerequirement}
V_k^0=C\N{k+1}^{-2}P_{\sim\N{k}}\mathbb P\div(a_{k+1}^2\theta\otimes\theta).
}
As stated, this is not quite possible, but the reader familiar with convex integration will recognize the issue is analogous to a problem first encountered by Nash in \cite{nash1954c} that can be solved, for instance, by replacing the simple ansatz $V_k^0=a_k(x)\theta\sin(\N{k}x\cdot\eta)$ with one based on a Mikado flow, i.e., a superposition of spatially disjoint shear flows. These have the advantage that the rank-1 stress $a_{k+1}^2\theta\otimes\theta$ now becomes a linear combination $\sum_ja_{j,k+1}^2\theta_j\otimes\theta_j$. We observe that $V_k^0$ can be constructed to be in the form $\curl\curl\psi_k^0$ which makes it not only divergence free, but also in the form $\div R_k$ for some symmetric tensor $R_k$. Thus $a_{k+1}$ can be chosen to satisfy \eqref{simplerequirement} exactly.

Imposing \eqref{simplerequirement} at every $k\geq0$ creates a recursive dependence between the modes of the initial data: having chosen $V_0^0$, the requirement \eqref{simplerequirement} determines $a_1$, which in turn determines $V_1^0$, which then determines $a_2$, and so on. Based on this, we can estimate the amplitudes of the $V_k^0$. Roughly speaking, $\|V_k^0\|_{L^\infty}\sim\|a_k\|_{L^\infty}$. Then \eqref{simplerequirement} leads to
\eq{\label{recursivesize}
\|V_{k+1}^0\|_{L^\infty}\approx C_1\N{k+1}\left(\frac{\|V_k^0\|_{L^\infty}}{\N{k}}\right)^\frac12.
}
As an immediate consequence, we should expect the $k$th component of the data $V_k^0$ to have amplitude on the order $\N{k}$, in accordance with the critical scaling of \eqref{NSE}. Two important observations are in order. First, \eqref{recursivesize} shows why there is no danger from losing constants during the iteration, which is a fatal issue for convex integration schemes at the critical regularity. Indeed, any large constants (independent of $k$) that appear in constructing $V_{k+1}^0$ from $V_k^0$ are removed by successively taking the $1/2$ power. Second, \eqref{recursivesize} shows why this method cannot produce solutions with small data (which would contradict Theorem~\ref{KTtheorem}). We are free to initiate the process with an arbitrarily small choice of $V_0^0$, but as $k\to\infty$ the amplitude invariably approaches the fixed point of \eqref{recursivesize}, which is presumably large.\footnote{One could use this idea to arrange that the data is not just finite in $BMO^{-1}$, but arbitrarily small in a supercritical space like $B^{-1-\epsilon}_{\infty,\infty}$.}

One may notice at this point that the solutions described above belong to $B^{-1}_{\infty,\infty}$ but not $BMO^{-1}$. The required improvement is now a consequence of improving the spatial support of each $V_k^0$. Recall that Mikado flows are supported on a periodic collection of pipe regions, and there is no difficulty in making the volume of the support smaller than, say, $|\Td|/100$. If one is sufficiently careful\footnote{One has to overcome obstacles related to the nonlocality of certain Fourier multipliers that naturally arise in the construction.}, it is further possible to select $a_{k+1}$ satisfying \eqref{simplerequirement} such that $\supp a_{k+1}\subset\supp V_k^0$. It follows inductively that the support of $V_k^0$ is contained in the intersection of the supports of the Mikado flows that were used to define $v_0^0,v_1^0,\ldots,V_k^0$. Because these flows are at different scales, the volume of this intersection is multiplicative and one can bound $|\supp V_k^0|\leq 2^{-k}|\Td|$, which suffices to improve the regularity to $BMO^{-1}$.

\subsubsection{Construction of the Perturbation}

Finally, let us address the last challenge of the proof which is constructing perturbations $w^{(1)}$ and $w^{(2)}$ that correct the errors, i.e., so that $u^{(i)}=v^{(i)}+w^{(i)}$ are exact solutions of \eqref{NSE} with data $U^0$. The idea is in a similar spirit to the constructions by Chemin and Gallagher~\cite{chemin2006global,chemin2009wellposedness,chemin2010large} of global solutions of \eqref{NSE} from initial data that is large in $BMO^{-1}$ (or even $B^{-1}_{\infty,\infty}$) while satisfying a nonlinear smallness condition.\footnote{Interestingly, these types of constructions have been used to prove uniqueness and weak-strong uniqueness (see for instance~\cite{chemin1999theoremes}) but not, to our knowledge, non-uniqueness.} Suppose $v^{(i)}$ solves \eqref{NSE} with a small error in divergence form,
\eqn{
\dd_tv^{(i)}-\Delta v^{(i)}+\mathbb P\div v^{(i)}\otimes v^{(i)}=\div F^{(i)}\\
v^{(i)}|_{t=0}=U^0.
}
Then $w^{(i)}$ must satisfy the perturbed Navier--Stokes equations
\eqn{
\dd_tw^{(i)}-\Delta w^{(i)}+\mathbb P\div(2v^{(i)}\odot w^{(i)}+w^{(i)}\otimes w^{(i)})=-\div F^{(i)}\\
w^{(i)}|_{t=0}=0.
}
This equation is of the same type faced in~\cite{chemin2006global,chemin2009wellposedness,chemin2010large}. The obvious strategy is to construct solutions via a fixed point argument, but this is complicated by the additional linear terms which are not contractive when $v^{(i)}$ is large. In the papers cited, this difficulty is handled by taking the initial data $U^0$ to be in $H^{1/2}$ (or at least $B^{-1}_{\infty,2}$), leading to $e^{t\Delta}U^0\in L_t^2L_x^\infty$. Unfortunately, the $v^{(i)}$ constructed above are not contained in $L_t^2L_x^\infty$ (nor is that the case for, say, the self-similar solutions posited by Jia and \v Sver\'ak) so those methods break down.

Instead, we proceed as follows. For $i=1,2$, let $S^{(i)}(t,t')$ be the semigroup associated to the the linearized equation for $w^{(i)}$, defined for all $0<t'\leq t$. Then $w^{(i)}$ should obey the mild formulation
\eq{\label{wmildform}
w^{(i)}(t)=\int_0^tS^{(i)}(t,t')(w^{(i)}\otimes w^{(i)}-F^{(i)})dt'.
}
Note that $w^{(i)}$ has zero initial data, so to make sense of this mild formulation, we need only define the semigroup for $t'>0$. Of course, it is also important that the estimates on $S^{(i)}(t,t')$ do not deteriorate too severely as $t'/t\to0$. We show that $S^{(i)}(t,t')$ has estimates slightly worse\footnote{The loss comes from the fact mentioned above that $v^{(i)}$ is not in $L^2(\mathbb R_+;L^\infty)$ or $L^1(\mathbb R_+,t^{-\frac12}dt;L^\infty)$. By making the frequency scales $\N{k}$ more lacunary, we can make these norms blow up as slowly as desired near the initial time, thus minimizing the loss.} than $e^{(t-t')\Delta}\mathbb P\div$. This loss is tolerable because by taking the sequence $\N{k}$ to increase faster than exponentially, the errors $F^{(i)}$ become not just small, but small when measured in a subcritical norm; for instance $\|F^{(i)}(t)\|_{L^\infty}\lesssim t^{-1+\alpha}$ for an $\alpha>0$ (cf.\ the critical power $\alpha=0$). Combining all these elements, we construct the $w^{(1)}$ and $w^{(2)}$ as fixed points of the operator in the mild form \eqref{wmildform} in a small ball around zero measured in a slightly subcritical norm. We conclude by showing that $w^{(i)}\to0$ weakly near the initial time and that $w^{(i)}$ are small enough to not disturb the distinctness of $v^{(1)}$ and $v^{(2)}$.

\subsection{Plan of the Paper}

In \S\ref{preliminarysection}, we state some of the basic Littlewood--Paley theory and related estimates (\S\ref{littlewoodpaleysection}--\ref{functionspacessection}) and introduce some of the key parameters that go into the construction (\S\ref{parametersubsection}). In \S\ref{principalpartsection} we build the data $U^0$ and the leading parts $v^{(i)}$ of the solutions: first we construct the Mikado building blocks for the initial data and prove estimates on the supports (\S\ref{mikadoflowsubsection}); then we run the iterative procedure to construct the initial data and prove estimates (\S\ref{dataconstructionsubsection}); followed by construction of the leading parts of the two distinct solutions forward in time (\S\ref{principalpartconstructionsubsection}). \S\ref{perturbationsection} is concerned with constructing the perturbation $w^{(i)}$ that will correct the principal parts $v^{(i)}$: first we estimate the residuals $F^{(i)}$ that are to be corrected (\S\ref{residualsubsection}); then we construct the semigroup $S^{(i)}$ of the Navier--Stokes linearized around the principal parts $v^{(i)}$ (\S\ref{semigroupsubsection}); and finally we construct the perturbations with a fixed point argument (\S\ref{fixedpointsubsection}). In \S\ref{proofoftheoremsection}, we complete the proof of Theorem~\ref{maintheorem}, proving that the two solutions constructed attain the initial data and are distinct. Appendices~\ref{CIappendix}--\ref{multiplierappendix} give some technical tools used in the proofs.

\subsection*{Acknowledgments}
We are grateful to Camillo De Lellis and Terence Tao for their valuable comments on the manuscript. MPC acknowledges the support of the National Science Foundation in the form of an NSF Graduate Research Fellowship and under Grant No. DMS-2350252. SP acknowledges support from the NSF under Grant No. DMS-1926686.

\section{Preliminaries}\label{preliminarysection}

\subsection{Notation}\label{notationsection}

We use the convention $\mathbb N=\{0,1,2,\ldots\}$. Capital letters such at $N$, $M$, etc.\ are reserved for parameters taking values in the dyadic natural numbers $2^\mathbb N$.

We write $\langle x\rangle\coloneqq (1+|x|^2)^\frac12$ for $x\in\Rd$. For any positive expression $Y$, $O(Y)$ stands in for any quantity $X$ satisfying $X\leq CY$. The expressions $X\lesssim Y$, $X\gtrsim Y$, and $X\sim Y$ take on their usual meanings: $X\leq CY$, $X\geq CY$, and $C^{-1}X\leq Y\leq CX$ respectively. We add subscripts to $O$, $\lesssim$, etc.\ to indicate that the constants can depend on the variables in the subscript. See \S\ref{parametersubsection} for more precise details on the dependencies of the constants.

We write $a\vee b$ and $a\wedge b$ for $\max\{a,b\}$ and $\min\{a,b\}$, respectively.

For $a,b\in\Rd$, define the symmetric tensor product $a\odot b\coloneqq \frac12(a\otimes b+b\otimes a)$. Let $S^{3\times 3}(\mathbb R)$ be the space of real symmetric $3\times 3$ matrices equipped with the Frobenius inner product, which we denote by $A:B \coloneqq A_{ij}B_{ij}$. Here and elsewhere, repeated indices are summed according to the Einstein convention.

We use the following Fourier series convention: if $f\in C^\infty(\Td;\mathbb R)$,
\eqn{
\hat f(\xi)=\fint_\Td f(x)e^{-ix\cdot \xi}dx,\quad f(x)=\sum_{\xi\in\Zd}\hat f(\xi)e^{i\xi\cdot x}.
}
Occasionally we write $\mathcal Ff$ for $\hat f$. Here and elsewhere, $\Td$ denotes the length-$2\pi$ torus $(\mathbb R/2\pi\mathbb Z)^3$ and $\fint_\Td$ is the normalized integral $(2\pi)^{-3}\int_\Td$. 

\subsection{Littlewood--Paley Estimates}\label{littlewoodpaleysection}

Let us define a dyadic partition of unity of $\Rd$,
\eqn{
\sigma(\xi)+\sum_{N\geq1}\pi(N^{-1}\xi)=1
}
for $\sigma\in C_c^\infty(\{\xi\in\Rd:|\xi|<1\})$ and $\pi\in C_c^\infty(\{\xi\in\Rd:\frac12<|\xi|<2\})$, where $N$ takes values in the dyadic natural numbers $2^\mathbb N$. The details are standard and we refer the reader to, for instance, \cite{bahouri2011fourier}. For $f$ a distribution on $\Td$, one can define the Littlewood--Paley projections as Fourier multipliers according to
\eqn{
P_Nf(x)&=\sum_{\xi\in\Zd}\pi(N^{-1}\xi)\hat f(\xi)e^{ix\cdot\xi},\\
P_{\leq N}f(x)&=\sum_{\xi\in\Zd}(\sigma(\xi)+\sum_{M\leq N}\pi(M^{-1}\xi))\hat f(\xi)e^{ix\cdot\xi},
}
with the obvious interpretation for related notation such as $P_{>N}$, etc.

More generally, for $m\in C^\infty(\Rd\setminus0;\mathbb C)$ and $f\in\mathcal S(\Td)$, we define the Fourier multiplier with symbol $m$ by
\eqn{
m(\grad/i)f(x):= \sum_{\xi\in\Zd}m(\xi)\hat f(\xi)e^{ix\cdot\xi}.
}
Important examples for our purpose are constant coefficient differential operators with $m$ a polynomial, the heat propagator with $m(\xi)=e^{-t|\xi|^2}$, and the Leray projection $\mathbb P$ with $m(\xi)=\textrm{Id}-|\xi|^{-2}\xi\otimes\xi$. Recall that these operators are all mutually commuting.

Let us state some basic estimates on the interaction of Littlewood--Paley projection with these multipliers. Note that by applying them component-wise, they extend easily to vector- and matrix-valued multipliers.

\begin{lem}[Bernstein inequalities]\label{bernsteinlemma}
Let $m\in C^\infty(\Rd\setminus0;\mathbb C)$ obeying
\eq{\label{multiplierassumption}
|\grad^im(\xi)|\leq A|\xi|^{\alpha-i}
}
for $i=0,1,\ldots,10$. For all $1\leq p\leq q\leq\infty$ and $f\in L^p(\Td;\mathbb R)$,
\eq{\label{bernsteininequality}
\|P_Nm(\grad/i)f\|_{L^q(\Td)}&\lesssim_{p,q}AN^{\alpha+\frac3p-\frac3q}\|f\|_{L^p(\Td)}.
}
\end{lem}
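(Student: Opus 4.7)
The plan is to realize $P_Nm(\grad/i)f=K_N*f$ where $K_N$ is the periodic convolution kernel with Fourier coefficients $\hat K_N(\xi)=\pi(N^{-1}\xi)m(\xi)$ for $\xi\in\Zd$, and then bound the result by Young's convolution inequality with exponents $1/q=1/p+1/r-1$. The heart of the matter is to prove the kernel estimate
\eqn{
\|K_N\|_{L^r(\Td)}\lesssim AN^{\alpha+3(1-1/r)}
}
uniformly in $N\geq 1$; combined with Young's inequality this yields $\|K_N*f\|_{L^q}\lesssim AN^{\alpha+3/p-3/q}\|f\|_{L^p}$, which is exactly \eqref{bernsteininequality} since $1-1/r=1/p-1/q$.

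To prove the kernel estimate, I would first work on $\Rd$ via the continuous inverse Fourier transform of $\pi(N^{-1}\xi)m(\xi)$. Rescaling by $\xi=N\eta$ gives
\eqn{
K_N^{\mathrm{cont}}(x)=N^3\int_{\Rd}\pi(\eta)m(N\eta)e^{iNx\cdot\eta}d\eta = AN^{3+\alpha}\,\tilde K_N(Nx),
}
where $\tilde K_N$ is the inverse Fourier transform of the symbol $\phi_N(\eta)\coloneqq (AN^\alpha)^{-1}\pi(\eta)m(N\eta)$. On the annulus $\tfrac12\leq|\eta|\leq 2$ the hypothesis \eqref{multiplierassumption} gives $|\grad^i\phi_N(\eta)|\lesssim 1$ uniformly in $N\geq 1$ for $0\leq i\leq 10$, so integration by parts ten times in the inverse Fourier transform yields $|\tilde K_N(y)|\lesssim\langle y\rangle^{-10}$ uniformly in $N$. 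In particular $\|\tilde K_N\|_{L^r(\Rd)}\lesssim 1$ for every $r\in[1,\infty]$, and rescaling back gives $\|K_N^{\mathrm{cont}}\|_{L^r(\Rd)}\lesssim AN^{\alpha+3(1-1/r)}$.

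To transfer to $\Td$, I would observe that the periodic kernel is the periodization $K_N(x)=\sum_{n\in\Zd}K_N^{\mathrm{cont}}(x+2\pi n)$ (Poisson summation, justified by the Schwartz decay of $K_N^{\mathrm{cont}}$ established above). Summing $|K_N^{\mathrm{cont}}(x+2\pi n)|\lesssim AN^{3+\alpha}\langle N(x+2\pi n)\rangle^{-10}$ over $n$ gives $|K_N(x)|\lesssim AN^{3+\alpha}\langle Nx_\Td\rangle^{-10}$ on the fundamental domain (where $x_\Td$ is the representative of $x$ nearest to $0$), and computing the $L^r$ norm of this bound over $\Td$ recovers $\|K_N\|_{L^r(\Td)}\lesssim AN^{\alpha+3(1-1/r)}$ (for $N\geq 1$; the case $N=1$ is simply $\|K_N\|_{L^r(\Td)}\lesssim A$ by Hausdorff--Young since the symbol is smooth and compactly supported in a fixed set).

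The main (purely technical) obstacle is verifying that the hypothesis $|\grad^im|\leq A|\xi|^{\alpha-i}$ for $i\leq 10$ really does suffice to make $\tilde K_N$ decay at rate $\langle y\rangle^{-10}$ with constant independent of $N$, which is a direct bookkeeping of the product rule applied to $\pi(\eta)m(N\eta)$ (each derivative falling on $m(N\eta)$ produces a factor of $N$ but is compensated by the factor $N^{\alpha-i}$ from \eqref{multiplierassumption}, and derivatives on $\pi$ are harmless). Everything else, including the Poisson summation step, is routine.
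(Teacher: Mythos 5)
Your proposal is correct and follows essentially the same route as the paper: Poisson summation to relate the periodic kernel to the inverse Fourier transform on $\Rd$, rescaling plus integration by parts to get pointwise decay of the kernel, and Young's inequality to conclude. The only cosmetic difference is that the paper first proves an abstract multiplier lemma (Lemma~\ref{generalmultiplierlemma}) and obtains the $L^r$ kernel bound by interpolating between $L^1$ and $L^\infty$, whereas you compute the $L^r$ norm directly from the pointwise bound $|K_N(x)|\lesssim AN^{3+\alpha}\langle Nx_\Td\rangle^{-10}$; both come to the same thing.
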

In particular, one obtains the usual Bernstein inequalities for $P_N\grad^n$.

\begin{lem}[Estimates on the heat propagator]\label{heatlplemma}
Let $f\in L^p(\Td;\mathbb R)$ and $p\in[1,\infty]$. Then for all $t\geq0$,
\eq{\label{heatlittlewoodpaleymultiplierestimate}
\|e^{t\Delta}P_Nf\|_{L^p(\Td)}&\lesssim_p \exp(-N^2t/O(1))\|f\|_{L^p(\Td)}.
}
More generally,
\eq{\label{stokesequationsolutionoperatorestimate}
\|e^{t\Delta}P_Nb(\grad/i)f\|_{L^p(\Td)}&\lesssim_{p,b} N^\alpha\exp(-N^2t/O(1))\|f\|_{L^p(\Td)}
}
where $b\in C^\infty(\Rd\setminus0;\mathbb C)$ is any $\alpha$-homogeneous symbol. In other words, $b(\mu\xi)=\mu^\alpha b(\xi)$ for all $\mu>0$.

Moreover, if $b$ is as above with $\alpha>0$,
\eq{
\|e^{t\Delta}f\|_{L^p(\Td)}&\lesssim_p \exp(-t/O(1))\|f\|_{L^p(\Td)},\label{heatmultiplierestimate}\\
\|e^{t\Delta }b(\grad/i)f\|_{L^p(\Td)}&\lesssim_pt^{-\frac\alpha2}\exp(-t/O(1))\|f\|_{L^p(\Td)}\label{heatandderivativemultiplierestimate}
}
where, in \eqref{heatmultiplierestimate}, one must assume additionally that $\fint_\Td f=0$.
\end{lem}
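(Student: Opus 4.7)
The plan is to deduce \eqref{heatlittlewoodpaleymultiplierestimate} and \eqref{stokesequationsolutionoperatorestimate} from the Bernstein inequalities (Lemma~\ref{bernsteinlemma}) applied to a fattened frequency-localized symbol, and then to derive \eqref{heatmultiplierestimate}--\eqref{heatandderivativemultiplierestimate} from these by a direct torus heat-kernel calculation and a dyadic summation.

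For \eqref{stokesequationsolutionoperatorestimate}, I would fix $\tilde\pi \in C_c^\infty(\{1/4 < |\xi| < 4\})$ with $\tilde\pi \equiv 1$ on $\supp \pi$ and set $m_N(\xi) \coloneqq \tilde\pi(N^{-1}\xi)\, b(\xi)\, e^{-t|\xi|^2}$, so that $P_N m_N(\grad/i) = P_N e^{t\Delta} b(\grad/i)$. On its support $|\xi| \sim N$, the $\alpha$-homogeneity of $b$, Leibniz's rule, and the elementary absorption $(tN^2)^k e^{-tN^2/c_0} \lesssim_k e^{-tN^2/c}$ (valid for any $c > c_0$) combine to yield
\eqn{
|\grad^i m_N(\xi)| \lesssim_{b, i} |\xi|^{\alpha - i}\, e^{-N^2 t / O(1)}, \qquad i = 0, 1, \ldots, 10,
}
with the bound holding trivially off support. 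Lemma~\ref{bernsteinlemma} applied with $p = q$ and constant $A \lesssim_b e^{-N^2 t / O(1)}$ then gives \eqref{stokesequationsolutionoperatorestimate}, and specializing to $b \equiv 1$, $\alpha = 0$ gives \eqref{heatlittlewoodpaleymultiplierestimate}.

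For \eqref{heatmultiplierestimate}, I would exploit the spectral gap of $-\Delta$ on $\Td$. Since $f$ is mean zero, $e^{t\Delta} f = \tilde K_t * f$ where $\tilde K_t(x) \coloneqq \sum_{\xi \in \Zd \setminus \{0\}} e^{-t|\xi|^2} e^{i\xi \cdot x}$. For $t \geq 1$, factor $e^{-t|\xi|^2} \leq e^{-t/2}\, e^{-|\xi|^2/2}$ using $|\xi|^2 \geq 1$ and sum: $\|\tilde K_t\|_{L^\infty(\Td)} \lesssim e^{-t/2}$, hence $\|\tilde K_t\|_{L^1(\Td)} \lesssim e^{-t/2}$ by finite volume. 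For $0 \leq t \leq 1$, $\|\tilde K_t\|_{L^1(\Td)} \lesssim 1$ follows since $K_t$ is a probability density, and the slack is absorbed into the constant. Young's inequality delivers \eqref{heatmultiplierestimate}. For \eqref{heatandderivativemultiplierestimate}, the hypothesis $\alpha > 0$ forces $b(\grad/i) f$ to have mean zero, so I may replace $f$ by $f - \fint f$ and decompose $f = \sum_{N \geq 1,\, \text{dyadic}} P_N f$ (with $P_1$ absorbing the low-frequency block). Applying \eqref{stokesequationsolutionoperatorestimate} blockwise and using $\|P_N f\|_{L^p} \lesssim \|f\|_{L^p}$ gives
\eqn{
\|e^{t\Delta} b(\grad/i) f\|_{L^p(\Td)} \lesssim_{p, b} \|f\|_{L^p(\Td)} \sum_{N \geq 1} N^\alpha e^{-N^2 t / O(1)},
}
and the dyadic sum is $\lesssim t^{-\alpha/2} e^{-t / O(1)}$ after peeling off one copy of $e^{-t/O(1)}$ (using $N^2 \geq 1$) and splitting the remaining sum at the threshold $N \sim t^{-1/2}$ (geometric summation below, exponential decay above).

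The main obstacle is the careful verification of the Bernstein hypothesis for $m_N$: one must track how the polynomial factors $(tN^2)^k$ generated by differentiating $e^{-t|\xi|^2}$ are absorbed into the exponential $e^{-N^2 t / O(1)}$ with only a constant-factor degradation of the decay rate, and ensure that this absorption does not pollute the $b$-dependence of the implicit constants. Everything else reduces to routine bookkeeping with standard kernel estimates on the torus and dyadic summations.
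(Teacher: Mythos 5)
Your proof is correct, and it takes a genuinely different route from the paper's in two places worth noting. For \eqref{heatlittlewoodpaleymultiplierestimate}--\eqref{stokesequationsolutionoperatorestimate}, the paper applies Lemma~\ref{generalmultiplierlemma} directly to the symbol $e^{-t|\xi|^2}\pi_N(\xi)b(\xi)$ and splits into the regimes $t\leq N^{-2}$ and $t>N^{-2}$ to estimate $\|\grad^n m\|_{L^1}$ in each; you instead fatten the cutoff to $\tilde\pi$, verify the pointwise hypothesis \eqref{multiplierassumption} of Lemma~\ref{bernsteinlemma} with $A\lesssim_b e^{-N^2t/O(1)}$, and invoke Bernstein directly. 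The key absorption $(tN^2)^k e^{-tN^2/c_0}\lesssim_k e^{-tN^2/c}$ that you flag does indeed make the regime split unnecessary, so your route is marginally cleaner, though both ultimately rest on Lemma~\ref{generalmultiplierlemma} (since that is how Lemma~\ref{bernsteinlemma} is proved). The more substantive difference is \eqref{heatmultiplierestimate}: the paper sums dyadically and has to split at the threshold $N\sim t^{-1/2}$, treating the low-frequency block $P_{1\leq\cdot\leq t^{-1/2}}e^{t\Delta}$ as a single multiplier precisely to avoid a logarithmic loss in the number of modes. Your argument sidesteps this entirely by working with the periodized heat kernel $\tilde K_t$ on $\Td$ directly: for $t\geq1$ the spectral gap $|\xi|\geq1$ lets you peel off $e^{-t/2}$, and for $t\leq1$ positivity of $K_t$ gives the trivial $L^1$ bound. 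This is more elementary and requires no decomposition bookkeeping, which is a genuine simplification. For \eqref{heatandderivativemultiplierestimate} both you and the paper sum dyadically; your split at $N\sim t^{-1/2}$ is exactly what the paper's brief remark ``the low modes are summable when $\alpha>0$'' amounts to, so there you are filling in detail rather than diverging.
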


In \eqref{stokesequationsolutionoperatorestimate}, one should think of $b(\xi)$ as being a component of $\grad$, $\grad\mathbb P$, etc. The proofs are deferred to Appendix~\ref{multiplierappendix}.

\subsection{Function Spaces}\label{functionspacessection}

In the sequel we make use of an equivalent definition of $BMO^{-1}$ rather than the one given in \S\ref{introduction}: $u\in BMO^{-1}$ if $u=\div \varphi$ for some vector field $\varphi\in (BMO)^3$. Recall that $\varphi\in L^1_{loc}$ is in $BMO$ if
\eqn{
\|\varphi\|_{BMO}\coloneqq\sup_Q\fint_Q|\varphi(x)-\varphi_Q|dx<\infty
}
where the supremum runs over cubes $Q\subset\Rd$ and $\varphi_Q=\fint_Q\varphi$. $BMO$ thus defined is obviously only a norm when taken modulo constants, but this issue vanishes for $BMO^{-1}$. These definitions are adapted without issue to functions on $\Td$, considering them as periodic functions on $\Rd$. It is easy to see that averages on large cubes are comparable to averages over a single period. Thus, for instance, the spaces $BMO^{-1}$ and $bmo^{-1}$ from \cite{koch2001well} coincide in the periodic setting.

We also use the homogeneous H\"older--Zygmund spaces, which are the spaces of distributions satisfying
\eqn{
\|u\|_{\mathcal C^s}\coloneqq \sup_{N\in2^\mathbb N}N^s\|P_Nu\|_{L^\infty}<\infty
}
where $s\in\mathbb R$. It is well-known that for $s\in(0,1)$, $\mathcal C^s$ coincides with the standard homogeneous H\"older space $C^s$. More generally, when $s=m+\alpha$ with $m\in\mathbb N$ and $\alpha\in(0,1)$, the space $\mathcal C^{s}$ coincides with $C^{m,\alpha}$. For all $s\in\mathbb R$, $\mathcal C^s$ coincides with the Besov space $\dot B^s_{\infty,\infty}$. In particular, $\mathcal C^{-1}=\dot B^{-1}_{\infty,\infty}$ is a space discussed in \S\ref{introduction} which is strictly weaker than $BMO^{-1}$. We will use the fact that for all $s>0$, $\mathcal{C}^{s}\cap L^\infty$ is a multiplicative algebra. Namely, for $f,g
\in \mathcal{C}^s\cap L^\infty$ we have
$$\|fg\|_{\mathcal{C}^s} \lesssim_s \|f\|_{\mathcal{C}^s}\|g\|_{L^\infty} + \|f\|_{L^\infty}\|g\|_{\mathcal{C}^s}.$$

We remark that $\mathcal C^s$ and $C^s$ defined as above are homogeneous and do not control the function at frequency zero. This is as intended, and typically does not make a difference, since most of the fields we will measure have zero average on $\Td$. For such fields, it is clearly the case that $\|f\|_{L^\infty}\lesssim \|f\|_{\mathcal C^s}$ for all $s>0$.

Let us also record the following heat estimate in the H\"older--Zygmund scale, which is an easy consequence of \eqref{stokesequationsolutionoperatorestimate}: for all $m\geq0$, $s_1,s_2\in\mathbb R$ satisfying $m+s_1-s_2\geq0$, and $f\in\mathcal C^{s_2}$, we have
\eq{\label{besovheatestimate}
\|e^{t\Delta}\grad^mf\|_{\mathcal C^{s_1}}\lesssim_{m,s_1,s_2}t^{-\frac{m+s_1-s_2}2}\|f\|_{\mathcal C^{s_2}}.
}
The estimate holds just as well if $f$ on the left-hand side is replaced by $\mathbb Pf$. Furthermore, one can replace $\mathcal C^0$ by the stronger norm $L^\infty$, with the exception of the endpoint case $\|e^{t\Delta}\mathbb Pf\|_{L^\infty}\lesssim \|f\|_{L^\infty}$ which fails.

\subsection{Parameters}\label{parametersubsection}

We summarize the dependencies of the parameters in the construction and their relation to the various constants that appear in the inequalities.

We introduce a parameter $\alpha\in(0,\frac18)$ that measures the subcriticality of the perturbations $w^{(1)}$ and $w^{(2)}$. The data and solutions we construct are active on two sequences of scales $\M{k}$ and $\N{k}$ which we take to be of the form
\eqn{
\M{k}&=\lfloor A^{b^k}\rfloor,\quad \N{k}=\M{k}\lfloor \M{k}^{\gamma-1}\rfloor,\quad k=0,1,2,\ldots
}
where $b>5\vee\frac1{1-8\alpha}$ and $\frac1{1-4\alpha}<\gamma<\frac1{4\alpha+b^{-1}}$ can be chosen freely. Here $\lfloor x\rfloor$ is the largest integer $\geq x$. It is easy to see that the condition on $b$ makes the condition on $\gamma$ satisfiable. One should think of the length scales $\M{k}^{-1}$ and $\N{k}^{-1}$ as the period of the Mikado flow and the oscillation wavelength, respectively.

We record the following elementary estimate which will be useful:
\eq{\label{parameterinequality}
4\alpha<1-\gamma^{-1}<1-b^{-1}.
}

Here we provide some detail on the dependencies of the parameters and constants. Having fixed $b$, $\gamma$, and $\alpha$ to satisfy the above relations, one can choose $\delta$ smaller than an absolute constant specified in the course of the proof of Lemma~\ref{supportlemma}. Then $\delta_0$ is chosen small depending on $\delta$ to satisfy \eqref{pipevolume}, and depending on an absolute constant to satisfy \eqref{pipesseparated}. The additional parameters chosen throughout Sections~\ref{principalpartsection}--\ref{perturbationsection} (e.g., $\epsilon$, $\beta$, $\beta_1$, $c_1$, $\kappa$) are chosen to lie in explicit intervals depending on $b$, $\alpha$, and $\gamma$. Throughout the proof (with the exception of Lemma~\ref{supportlemma} where $\delta$ is chosen), the implicit constants implied by the notation $\lesssim$, $O(\cdot)$, etc.\ may depend on $\alpha$, $\beta$, $\gamma$, $\delta$, $\delta_0$, $\epsilon$, $\kappa$, $b$, and $c_1$ but \emph{not} $A$. The named constants $C_1,C_2,\ldots$ are chosen large depending on implicit constants and all parameters other than $A$. Finally, throughout the construction and proofs, the scale parameter $A$ in the definition of $\M{k}$ is taken to be as large as needed, depending on all the other parameters.

\section{The Principal Part}\label{principalpartsection}

\subsection{Mikado flow potentials}\label{mikadoflowsubsection}

The starting point for the construction of our initial data is the Mikado pipe flow introduced by Daneri and Sz\'ekelyhidi~\cite{daneri2017non}. Fix a small $\delta>0$ to be determined which will be the total volume of the Mikado flow support in each period of $\Td$. For $j\in\{1,2,\ldots,6\}$, let $\theta_j\in\mathbb Z^3$ as in Lemma~\ref{nash-lemma} and $\eta_j\in\Zd\setminus0$ such that $\theta_j\cdot\eta_j=0$, which will give the orientation of the shear flows. Let $\ell_j$ be the $2\pi$-periodic line $x_j+\theta_j\mathbb R\mod(2\pi\mathbb Z)^3$ for some $x_j\in\Td$ to be specified in Lemma~\ref{mikadodatapropertieslemma}. Note that since each $\theta_j \in \mathbb{Z}^3$, each $\ell_j$ has finite winding number.

We fix the radial profile of the pipes $\varphi\in C_c^\infty ([0,1))$ to be even and satisfy $\varphi(0)=1$. Let $\delta_0$ be the pipe width, chosen small enough to keep the total volume of the pipes below $\delta$:
\eq{\label{pipevolume}
\pi\delta_0^2\,\sum_{j=1}^6|\ell_j|\leq\delta,
}
where $|\ell_j|$ is the length of $\ell_j\cap[0,2\pi]^3$. Then we let 
$$\tilde\varphi_j(x) = \varphi(\delta_0^{-1}d_{\mathbb{T}^3}(x,\ell_j))\mathbf1_{d_{\mathbb{T}^3}(x,\ell_j)\leq\delta_0}$$
be the profile that defines the support of the pipes. Let us emphasize that $\tilde\varphi_j$ should be interpreted as a function on $\Td$, i.e., as a $2\pi$-periodic function on $\mathbb{R}^3$.

For technical reasons related to localization in space, we define the Mikado flows in terms of vector potentials rather than velocity fields directly.

\begin{define}\label{mikadodefinition}
We define the Mikado flow vector potential
\eqn{
\Psi_{j,k}^0(x) &= \N{k}^{-2}\tilde\varphi_{j}(\M{k} x) \sin(\N{k}(x-x_j)\cdot\eta_j)\theta_j
}
as well as the (approximately) heat evolved potential
\eqn{
\Psi_{j,k}(x,t) &= \Psi_{j,k}^0(x)\exp(-|\eta_j|^2\N{k}^2t).
}
Further, we define slight expansions of the regions on which they are supported:
\begin{itemize}
    \item $\Omega_{j,k}$ is the $\delta_0$-neighborhood of $\supp\Psi_{j,k}$,
    \item $\widetilde\Omega_{j,k}$ is the $(2\delta_0)$-neighborhood of $\supp\Psi_{j,k}$, and
    \item $\Omega_{k}$ and $\widetilde\Omega_{k}$ are regions containing the intersection of the pipes up to and including those at scale $k$,
    \eqn{
    \Omega_{k}\coloneqq\bigcap_{m\leq k}\bigcup_{j=1}^6\,\Omega_{j,m},\quad \widetilde\Omega_{k}\coloneqq\bigcap_{m\leq k}\bigcup_{j=1}^6\,\widetilde\Omega_{j,m}.
    }
\end{itemize}
\end{define}

Defined in this way, $\Psi_{j,k}$ solve the heat equation up to a small error with data $\Psi^0_{j,k}$. Let us record several basic properties of the Mikado potentials.

\begin{lem}\label{mikadodatapropertieslemma}
With $\Psi_{j,k}^0$ and $\Psi_{j,k}$ as in Definition~\ref{Omegadefintion}, we have the following:
\begin{enumerate}[1.]
\item $\Psi_{j,k}^0$ and $\Psi_{j,k}$ are smooth, $2\pi$-periodic, and not identically zero. Moreover
\eq{\label{mikadoprofilebound}
\|\grad^m\Psi_{j,k}^0\|_{L^\infty(\Td)}\lesssim_m\N{k}^{-2+m}.
}
\item The vector fields $\Psi_{j,k}^0$ and $\Psi_{j,k}(t)$ at a fixed time solve the steady pressureless Euler equations:
\eq{\label{steadyeuler}
\div (\Psi_{j,k}\otimes \Psi_{j,k})=0,\quad \div \Psi_{j,k}=0.
}
\item Each $\Psi_{j,k}^0$ and $\Psi_{j,k}$ is supported in the periodic pipe region $\{x\in\Td:\dist(\M{k}x,\ell_j)\leq\delta_0\}$. Taking $\delta_0>0$ smaller if necessary, there exist $(x_j)_{j=1,\ldots,6}$ such that the supports satisfy
\eq{\label{pipesseparated}
\dist(\supp \tilde\varphi_{j_1},\supp \tilde\varphi_{j_2})>10\delta_0\quad\forall j_1\neq j_2.
}
In particular, it follows that $\dist(\widetilde\Omega_{j_1,k},\widetilde\Omega_{j_2,k})>5\delta_0M_k^{-1}$.
\item With
\eq{\label{l2normalizedpipes}
A_{j,k}\coloneqq\fint_\Td\tilde\varphi_j^2(\M{k}x)\sin^2(\N{k}(x-x_j)\cdot\eta_j)dx,
}
we have $A_{j,k}\sim\delta$.
\end{enumerate}
\end{lem}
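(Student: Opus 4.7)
The plan is to verify the four claims in order. Parts (1), (2), and (4) are direct calculations from the explicit form of $\Psi_{j,k}^0$; the only delicate step is the choice of shifts $(x_j)$ in (3).

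For (1), smoothness of $\Psi_{j,k}^0$ reduces to smoothness of $\tilde\varphi_j$. Since $\varphi$ is even, it is a smooth function of $r^2$, and $d_{\Td}(x,\ell_j)^2$ is smooth on the $\delta_0$-neighborhood of $\ell_j$ (for $\delta_0$ less than the injectivity radius of the normal exponential map), so $\tilde\varphi_j\in C^\infty(\Td)$; the cutoff $\mathbf 1_{d\leq\delta_0}$ is redundant because $\varphi$ already vanishes for $r\geq 1$. Periodicity is clear, and non-vanishing follows from $\varphi(0)=1$. For \eqref{mikadoprofilebound}, each $\grad$ falls either on the sine, producing a factor $\N{k}$, or on $\tilde\varphi_j(\M{k}\,\cdot)$, producing $\M{k}$; since $\M{k}\leq\N{k}$, the worst case is all $m$ derivatives on the sine, giving $\|\grad^m\Psi_{j,k}^0\|_{L^\infty}\lesssim_m\N{k}^{m-2}$. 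For (2), write $\Psi_{j,k}(x,t)=f_{j,k}(x,t)\theta_j$ with $f_{j,k}$ scalar. Because $\ell_j$ points in direction $\theta_j$, $\tilde\varphi_j$ is constant along translations by $\theta_j$, so $\theta_j\cdot\grad\tilde\varphi_j=0$; similarly $\theta_j\cdot\grad\sin(\N{k}(x-x_j)\cdot\eta_j)=\N{k}(\theta_j\cdot\eta_j)\cos(\cdots)=0$ by orthogonality. Thus $\theta_j\cdot\grad f_{j,k}=0$, and both identities in \eqref{steadyeuler} follow: $\div\Psi_{j,k}=\theta_j\cdot\grad f_{j,k}=0$ and $\div(\Psi_{j,k}\otimes\Psi_{j,k})=(\theta_j\cdot\grad f_{j,k}^2)\theta_j=0$.

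The substantive content of (3) is to exhibit shifts $(x_j)$ for which the six closed geodesics $\ell_j\subset\Td$ are pairwise disjoint at uniformly positive distance, whereupon \eqref{pipesseparated} holds after taking $\delta_0$ small depending on that distance. For $j_1\neq j_2$, $\ell_{j_1}\cap\ell_{j_2}\neq\emptyset$ on $\Td$ iff $x_{j_1}-x_{j_2}$ lies in the image under $\Rd\to\Td$ of $\mathrm{span}(\theta_{j_1},\theta_{j_2})$ (or of $\mathbb R\theta_{j_1}$ when $\theta_{j_1}=\theta_{j_2}$), a closed subgroup of $\Td$ of dimension $\leq 2$ and hence of measure zero. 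A generic choice of the six shifts therefore yields pairwise disjoint lines, and compactness of $\Td$ upgrades disjointness to uniform separation. The assertion on $\dist(\widetilde\Omega_{j_1,k},\widetilde\Omega_{j_2,k})$ then follows by rescaling: each $\widetilde\Omega_{j,k}$ sits within a $3\delta_0\M{k}^{-1}$-neighborhood of the scaled line $\M{k}^{-1}\ell_j$, and the latter lines are separated by $10\delta_0\M{k}^{-1}$, giving the claimed $>5\delta_0\M{k}^{-1}$ gap.

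For (4), expand $\sin^2=(1-\cos(2\cdot))/2$. The constant part contributes $\tfrac12\fint_\Td\tilde\varphi_j^2(\M{k}x)\,dx=\tfrac12\fint_\Td\tilde\varphi_j^2$ by the substitution $y=\M{k}x$ and periodicity; this integral is a positive constant multiple of $\delta_0^2|\ell_j|$ (integrating $\varphi^2$ against the tubular area element around $\ell_j$), which is comparable to $\delta$ by \eqref{pipevolume}, provided the $|\ell_j|$ are mutually comparable, which can be arranged by the choice of $\theta_j$ from Lemma~\ref{nash-lemma}. The remaining oscillatory term is the Fourier coefficient of the smooth function $\tilde\varphi_j^2(\M{k}\cdot)$ at frequency $2\N{k}\eta_j\in\M{k}\Zd$; by repeated integration by parts it is $O((\N{k}/\M{k})^{-K})$ for any $K$, hence negligible once $A$ is large. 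This yields $A_{j,k}\sim\delta$, and the main difficulty throughout is the generic-choice argument for the shifts in (3).
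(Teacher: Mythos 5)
Your proof is correct in substance and substantially more detailed than the paper's, which simply declares Properties 1--3 to be ``elementary consequences'' of the definition and cites Lemma~\ref{BVlemma} (the improved H\"older's inequality of Modena--Sz\'ekelyhidi/Buckmaster--Vicol) for Property 4. Your genericity argument for the shifts $(x_j)$ in Property 3 --- that the bad set for each pair $(j_1,j_2)$ is the image of a rational subspace of dimension $\le 2$, hence a closed proper subgroup of measure zero in $\Td$ --- is a clean way to fill a gap the paper leaves implicit, and compactness then supplies the uniform gap. For Property 4, where the paper applies Lemma~\ref{BVlemma} to $f=\tilde\varphi_j^2$, $g=\sin^2$, $\lambda=\N{k}/\M{k}\in\mathbb N$ to get error $O(\M{k}/\N{k})$, you instead expand $\sin^2=(1-\cos)/2$ and estimate the oscillatory Fourier coefficient of the fixed smooth profile $\tilde\varphi_j^2$ at frequency $2(\N{k}/\M{k})\eta_j$ by repeated integration by parts, giving $O((\M{k}/\N{k})^K)$; both suffice, yours giving a sharper (but unneeded) error. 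One small arithmetic slip in (3): you estimate the line separation as $10\delta_0\M{k}^{-1}$ and the $\widetilde\Omega_{j,k}$ as lying within $3\delta_0\M{k}^{-1}$ of the lines, giving only $4\delta_0\M{k}^{-1}$; the cleaner accounting is that $\widetilde\Omega_{j,k}$ (in the $\M{k}x$-scaled coordinates) is the $2\delta_0$-neighborhood of $\supp\tilde\varphi_j$, and the supports are separated by $>10\delta_0$, so the $\widetilde\Omega$'s are separated by $>6\delta_0\M{k}^{-1}>5\delta_0\M{k}^{-1}$ as claimed. This is a trivial fix and does not affect the argument.
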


\begin{proof}
Properties 1--3 are elementary consequences of Definition~\ref{Omegadefintion}. We emphasize that the choices of $(x_j)_{j=1}^6$ and $\delta_0$ made to achieve \eqref{pipesseparated} do not depend on $k$. Property 4 follows from an easy application of Lemma~\ref{BVlemma}, using the fact that $\fint_\Td\tilde\varphi_j^2\sim\delta$, and that $\N{k}/\M{k}$ can be made larger than any absolute constant by choosing a sufficiently large $A$.
\end{proof}

Next we prove sharp estimates on the volume of $\widetilde\Omega_k$. A useful tool will be the ``improved H\"older's inequality'' from \cite{modena2018non} following \cite{buckmaster2019nonuniqueness} which we quote in Appendix~\ref{CIappendix} for the reader's convenience. Note that for $X\subset\Td$, the volume $|X|$ is defined in the natural way, namely the volume of $X$ restricted to a single period.

\begin{lem}\label{supportlemma}
We have
\eqn{
|\widetilde\Omega_k|\leq2^{-k}|\Td|.
}
Moreover, there is a constant $C_0>1$ such that for all cubes $Q\subset\Rd$,
\eq{\label{cubeintersectionvolumeestimate}
|Q\cap\widetilde\Omega_k|\leq 2^{-(k-k(Q))}|Q|\quad\forall k\geq k(Q)
}
where we define $k(Q)\coloneqq \inf\{k\in\mathbb N:\M{k}\geq C_0\ell(Q)^{-1}\}$.
\end{lem}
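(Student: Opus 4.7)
My plan is to prove the stronger estimate \eqref{cubeintersectionvolumeestimate} by induction on $k\geq k(Q)$; the first assertion is then the special case $Q=[0,2\pi]^3$, which has $k(Q)=0$ once $A$ is chosen large enough that $M_0\geq C_0/(2\pi)$. Let $f_k:=\mathbf{1}_{\bigcup_j\widetilde\Omega_{j,k}}$. The initial input is that $f_k$ is $(2\pi M_k^{-1})$-periodic with average $\fint_{\mathbb T^3}f_k\leq \bar C\delta$ for some absolute constant $\bar C$; this follows from \eqref{pipevolume} together with the fact that $\widetilde\Omega_{j,k}$ thickens the scale-$k$ pipe to cross-sectional radius of order $\delta_0 M_k^{-1}$. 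The base case $k=k(Q)$ is trivial.

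For the inductive step from $k-1$ to $k$, I would use the identity $\widetilde\Omega_k=\widetilde\Omega_{k-1}\cap\bigcup_j\widetilde\Omega_{j,k}$ to write
\begin{equation*}
|Q\cap\widetilde\Omega_k|=\int_Q\mathbf{1}_{\widetilde\Omega_{k-1}}\,f_k\,dx.
\end{equation*}
Because $M_k\gg M_{k-1}$, the slow factor $\mathbf{1}_{\widetilde\Omega_{k-1}}\mathbf{1}_Q$ has structural scale at least $M_{k-1}^{-1}$, while $f_k$ is periodic at the much smaller scale $M_k^{-1}$. This scale separation is precisely the hypothesis of the improved H\"older inequality from Appendix \ref{CIappendix} (following \cite{modena2018non,buckmaster2019nonuniqueness}); applied after a standard mollification of $\mathbf{1}_{\widetilde\Omega_{k-1}}\mathbf{1}_Q$ at scale $M_{k-1}^{-1}$, it yields a one-step estimate
\begin{equation*}
|Q\cap\widetilde\Omega_k|\;\leq\;\bar C\delta\,|Q\cap\widetilde\Omega_{k-1}|+R_k|Q|,
\end{equation*}
where $R_k$ collects the mollification and improved-H\"older remainders, bounded by a product of $M_k^{-1}$ with a perimeter-type functional of $Q\cap\widetilde\Omega_{k-1}$.

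The key technical point is the perimeter estimate. Item 3 of Lemma \ref{mikadodatapropertieslemma} together with an elementary surface-area computation show that the scale-$m$ pipe family contributes surface area $\lesssim\delta_0 M_m$ per unit $\mathbb T^3$-volume and that distinct pipe families are separated, so the total surface area of $\widetilde\Omega_{k-1}$ is bounded by $\sum_{m\leq k-1}\delta_0 M_m\lesssim \delta_0 M_{k-1}$; the contribution from $\partial Q$ is of order $\ell(Q)^2$. The lacunarity $M_k=\lfloor A^{b^k}\rfloor$ with $b>1$ then guarantees $R_k\leq \epsilon_k$ with $\epsilon_k\to 0$ super-exponentially in $k$ for $A$ large.

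Fixing $\delta$ small enough that $\bar C\delta\leq 1/4$ (consistent with the other constraints on $\delta$) and $A$ large enough that $\epsilon_k$ decays super-exponentially, iteration gives
\begin{equation*}
|Q\cap\widetilde\Omega_k|\leq (\bar C\delta)^{k-k(Q)}|Q|+|Q|\sum_{j=k(Q)+1}^k(\bar C\delta)^{k-j}\epsilon_j\leq 2^{-(k-k(Q))}|Q|,
\end{equation*}
as the geometric sum is dominated by its $j=k(Q)+1$ term and $(\bar C\delta)^{k-k(Q)}\leq 4^{-(k-k(Q))}$. The main obstacle I anticipate is the sharp perimeter bound for $\widetilde\Omega_{k-1}$; this relies decisively on Item 3 of Lemma \ref{mikadodatapropertieslemma}, which ensures that distinct pipe families do not pathologically overlap, together with the super-exponential growth of $M_k$, which provides the room to absorb boundary errors into the geometric contraction strictly smaller than $1/2$.
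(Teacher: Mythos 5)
Your overall strategy---induction on $k$ driven by Lemma~\ref{BVlemma}, with the multiplicative gain $\bar C\delta<\frac12$ per scale and the super-exponential lacunarity of $\M{k}$ absorbing the remainders---is the same as the paper's. The essential difference is that you apply the improved H\"older inequality directly to the indicator functions $\mathbf1_{\widetilde\Omega_{k-1}}$ and $f_k$. Since Lemma~\ref{BVlemma} requires $C^1$ data, this forces you to mollify, which in turn forces you to estimate a perimeter-type functional of $Q\cap\widetilde\Omega_{k-1}$ and to optimize over the mollification scale. You correctly identify the perimeter bound as ``the key technical point,'' and the sketch of it (surface area $\lesssim\delta_0 M_m$ per scale, lacunarity absorbs the sum, pipe separation from Item~3 of Lemma~\ref{mikadodatapropertieslemma} prevents pathological overlaps) is plausible, but it is genuine extra work that you have not fully executed; in particular, the precise interplay between the $\partial Q$ contribution $\sim\ell(Q)^2$, the choice of mollification scale, and the normalization $R_k|Q|$ needs care.

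The paper avoids all of this with a cleaner device: it fixes, once and for all, a smooth $\phi\in C^\infty(\Td;[0,1])$ with $\phi\equiv1$ on $\bigcup_j\supp\tilde\varphi_j$, $\phi\equiv0$ outside the $\delta_0$-neighborhood, and $|\grad\phi|\lesssim\delta_0^{-1}$. Then $\phi_k(x)\coloneqq\phi(\M{k}x)$ satisfies $\mathbf1_{\widetilde\Omega_k}\leq\phi_0\cdots\phi_k$, and these majorants are smooth by construction with explicit $\|\phi_j\|_{C^1}\lesssim \M{j}$. Lemma~\ref{BVlemma} then applies verbatim to $f=\phi_0\cdots\phi_{k-1}$ (resp.\ times a fixed smooth bump adapted to $Q$) and $g=\phi$ with $\lambda=\M{k}$, and the $C^1$ norm of the product telescopes to $\lesssim \M{k-1}$. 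No mollification, no perimeter theory. So both approaches are rooted in the same mechanism, but your version buys nothing extra and pays in additional geometric-measure-theoretic overhead; if you want to push your route through, the main thing to nail down is the perimeter estimate for $Q\cap\widetilde\Omega_{k-1}$ with the correct $|Q|$-normalization, and the case $k=k(Q)$ (where $\ell(Q)^{-1}$ is comparable to $\M{k(Q)}$) should be dispatched trivially since $2^{-(k-k(Q))}|Q|=|Q|$ there.

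One small point: your claim that the error sum $\sum_{j=k(Q)+1}^k(\bar C\delta)^{k-j}\epsilon_j$ is ``dominated by its $j=k(Q)+1$ term'' is correct but deserves a word of justification, since the geometric factor $(\bar C\delta)^{k-j}$ is largest at $j=k$; it is only because $\epsilon_j$ decays super-exponentially in $j$ (much faster than $(\bar C\delta)^{-j}$ grows, for $A$ large) that the product peaks at the bottom of the range.
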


\begin{proof}
We emphasize that in this proof, the constants (implicit and otherwise) do not depend on $\delta$ or $\delta_0$. Recall the functions $\tilde\varphi_j(\M{k}\cdot)$ that define the supports of the $\Psi_{j,k}$. Let $\phi\in C^\infty(\Td;[0,1])$ be such that
\begin{itemize}
    \item $\phi\equiv1$ on $\bigcup_j\supp \tilde\varphi_j(x)$,
    \item $\phi\equiv0$ outside of the $\delta_0$-neighborhood of $\bigcup_j\supp \tilde\varphi_j(x)$, and
    \item $|\grad\phi|\leq C'\delta_0^{-1}$ for an absolute constant $C'$.
\end{itemize}
Clearly, by \eqref{pipevolume}, we have
\eq{\label{phiinl1}
\|\phi\|_{L^1}\leq\sum_j|\supp \tilde\varphi_j|\leq4\delta.
}
Then we define $\phi_k(x)=\phi(\M{k}x)$ leading to the pointwise inequality $\mathbf1_{\widetilde\Omega_k}\leq\phi_0\phi_1\cdots\phi_k\leq\mathbf1_{\Td}$. The lemma will follow from the stronger claim
\eqn{
\|\phi_0\phi_1\cdots\phi_k\|_{L^1}\leq(8\delta)^{k}|\Td|,
}
and it suffices to take, say, $\delta=\frac1{20}$. We proceed by induction. There is nothing to prove for $k=0$. For $k\geq1$, we apply Lemma~\ref{BVlemma} with $f=\phi_0\cdots\phi_{k-1}$, $g=\phi$, and $\lambda=\M{k}$ to obtain
\eqn{
\|\phi_0\phi_1\cdots\phi_k\|_{L^1}&\leq\|\phi_0\phi_1\cdots\phi_{k-1}\|_{L^1}\|\phi\|_{L^1}+O(\M{k}^{-1}\|\phi_0\phi_1\cdots\phi_{k-1}\|_{C^1}\|\phi\|_{L^1})\\
&\leq 4\delta(8\delta)^{k-1}|\Td|+O(M_k^{-1}\sum_{j=0}^{k-1}\|\phi_j\|_{C^1}),
}
in the last line using the inductive hypothesis and \eqref{phiinl1}. Note that the constants implied by $O()$ are absolute, coming from Lemma~\ref{BVlemma}. The first term is precisely $\frac12(8\delta)^k|\mathbb T^3|$, so it remains to show the second has the same upper bound. We compute
\eqn{
\sum_{j=0}^{k-1}\|\phi_j\|_{C^1}\leq \sum_{j=0}^{k-1}C'M_j\leq C'kM_{k-1}
}
so it suffices to show
\eq{\label{claim}
O(C'kM_{k-1}M_k^{-1})\leq\frac12(8\delta)^k|\mathbb T^3|.
}
One finds by differentiation that $k\mapsto k(8\delta)^{-k}\M{k-1}\M{k}^{-1}$ is decreasing for $k\geq1$ as long as $1+\log(8\delta)^{-1}-(b-1)(\log A)(\log b)<0$, which can be arranged by taking $A$ sufficiently large depending on $b$ and $\delta$. Thus \eqref{claim} reduces to $O(C'M_0M_1^{-1})\leq\delta|\mathbb T^3|$ which indeed holds, using that $M_0M_1^{-1}\leq2A^{1-b}$ and again taking $A$ sufficiently large depending on $\delta$, $b$, and the absolute constants that appeared.

Next, \eqref{cubeintersectionvolumeestimate} can be proven analogously, letting $f$ be the product of $\phi_0\cdots\phi_{k-1}$ and a bump function localized around $Q$.
\end{proof}

Now, with $\delta$ and $\delta_0$ fixed, the constants in the remainder of the paper are allowed to depend on them, and we shall not make this dependence explicit.

\subsection{Construction of the Data}\label{dataconstructionsubsection}

Having defined suitable potentials for the Mikado flows, we proceed to fix the remaining elements that go into the construction of the initial data.

\begin{define}\label{Omegadefintion}
We define:
\begin{itemize}
\item The differential operator that takes vector fields to fields of symmetric rank-2 tensors,
\eqn{
\mathcal Df\coloneqq -\grad f-\grad f^T+2(\div f)\Id;
}
\item A cutoff function $\chi_k$ which is identically $1$ in $\Omega_{k-1}$, identically $0$ outside of $\widetilde\Omega_{k-1}$, and obeys
\eq{\label{cutoffbounds}
\|\grad^m\chi_k\|_{L^\infty(\Td)}&\lesssim_m \M{k-1}^m;
}
\item A standard mollifier $\phi_k$ at length scale $\N{k+1}^{-\frac13}\N{k}^{-\frac23}$, supported in $B(0,1)$ with $\int_\Td\phi_k=1$;
\item The Nash operators $(\Gamma_j)_{j=1}^6$ which obey
\eqn{M=\sum_{j=1}^6\Gamma_j(M)^2\theta_j\otimes\theta_j}
for all $M\in S^{3\times3}$ within a radius $c_0=\frac1{1000}$ of the identity; see Lemma~\ref{nash-lemma} for details.
\end{itemize}

\end{define}
One readily sees that $\mathcal D$ has the property $\div\mathcal Df=\curl\curl f$; see Remark~\ref{curlcurlremark} for further explanation of this choice.

Finally we can build the velocity potential for the initial data. The construction is carried out inductively on scales.

\begin{define}\label{dataiteratedefinition}
Let $\psi_0^0\coloneqq \N{0}\phi_0*\Psi_{1,0}^0$. Given any $\psi_{k-1}^0$ and $k\geq1$, we inductively define
\eq{\label{psi0kdefinition}
\psi_k^0:= \phi_k*\sum_ja_{j,k}(x)\Psi^0_{j,k}(x)
}
where
\eqn{
a_{j,k}(x)=\N{k}\left(\frac{2\|\mathcal D\psi^0_{k-1} \|_{L^\infty}}{c_0|\eta_j|^2A_{j,k}}\right)^\frac12\chi_k(x)\Gamma_j\big(\Id+c_0\frac{\mathcal D\psi^0_{k-1}(x) }{\|\mathcal D\psi^0_{k-1} \|_{L^\infty}}\big)
}
with $\mathcal D$, $\chi_k$, $\phi_k$, and $\Gamma_j$ as in Definition~\ref{Omegadefintion}.

When $k=0$, \eqref{psi0kdefinition} holds just as well upon setting $a_{j,0}=\N{0}$ when $j=1$ and $a_{j,0}=0$ otherwise.
\end{define}
It is important to point out that $a_{j,k}$ is well-defined because the tensor at which we evaluate $\Gamma_j$ lies in the domain as specified in Appendix~\ref{CIappendix}. Here and elsewhere, the notation $\sum_j$ denotes a sum over $j\in\{1,2,\ldots,6\}$.

Let us comment on the inclusion of the mollifier in \eqref{psi0kdefinition}. Because $\mathcal D\psi_{k-1}^0$ appears in the definition of $\psi_k^0$ (via $a_{j,k}$), there is a potential loss-of-derivatives problem in the proofs of Lemma~\ref{principalpartiterativesteplemma} and Proposition~\ref{inductivepsiboundsproposition}. Similar to many convex integration constructions, mollification at a sequence of length scales shrinking to zero plays a Nash--Moser-type role in the iteration.

We also point out that the coefficients $a_{j,k}$ have amplitude $\sim N_k$, as the factors besides the $N_k$ are of unit size. Furthermore, the frequency support of $a_{j,k}$ is essentially supported (neglecting a small error) at and below $|\xi|\sim M_k$. Thus, $a_{j,k}$ is effectively constant compared to $\Psi_{j,k}^0$ which oscillates near the higher frequency $N_k$.

From Definition~\ref{dataiteratedefinition}, one can extract the following inductive estimates.

\begin{lem}\label{principalpartiterativesteplemma}
There is an absolute constant $C_1>0$ such that
\eq{\label{iterativeboundsI}
\|\psi_k^0 \|_{L^\infty}&\leq C_1\N{k}^{-1}\|\mathcal D\psi^0_{k-1} \|_{L^\infty}^\frac12
}
and
\eq{\label{iterativeboundsII}
\|\mathcal D\psi_k^0\|_{L^\infty}\leq \frac12C_1\|\mathcal D\psi^0_{k-1}\|_{L^\infty}^\frac12\left(1+\N{k}^{-1}\frac{\|\grad\mathcal D\psi^0_{k-1}\|_{L^\infty}}{\|\mathcal D\psi^0_{k-1}\|_{L^\infty}}\right)
}
for all $k\geq1$.
\end{lem}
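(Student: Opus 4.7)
The proof reduces to direct $L^\infty$ estimates of the right-hand side of \eqref{psi0kdefinition} and its first derivative. The plan rests on two structural facts: the mollifier $\phi_k$ is nonnegative with $\int\phi_k=1$, so convolution with it is an $L^\infty$ contraction, and the constant-coefficient operator $\mathcal D$ commutes with this convolution. Consequently it suffices to bound $\sum_j a_{j,k}\Psi_{j,k}^0$ and $\mathcal D\bigl(\sum_j a_{j,k}\Psi_{j,k}^0\bigr)$ in $L^\infty$ by the respective right-hand sides.

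For \eqref{iterativeboundsI}, the plan is to apply the triangle inequality inside the convolution and bound each factor. By \eqref{mikadoprofilebound} with $m=0$, $\|\Psi_{j,k}^0\|_{L^\infty}\lesssim \N{k}^{-2}$. From the definition of $a_{j,k}$, using $|\eta_j|\geq 1$, $A_{j,k}\sim\delta$ from Lemma~\ref{mikadodatapropertieslemma}(4), $\|\chi_k\|_{L^\infty}\leq 1$, and the uniform boundedness of $\Gamma_j$ on its domain (a neighborhood of $\Id$), one obtains $\|a_{j,k}\|_{L^\infty}\lesssim \N{k}\|\mathcal D\psi_{k-1}^0\|_{L^\infty}^{1/2}$. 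The product of these two bounds is precisely the claim (for an appropriate choice of $C_1$).

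For \eqref{iterativeboundsII}, I would expand $\mathcal D(a_{j,k}\Psi_{j,k}^0)$ using the Leibniz rule into a principal piece $a_{j,k}\mathcal D\Psi_{j,k}^0$ plus lower-order pieces in which one derivative falls on $a_{j,k}$, each controlled in $L^\infty$ by $\|\grad a_{j,k}\|_{L^\infty}\|\Psi_{j,k}^0\|_{L^\infty}$. The principal piece obeys $\|a_{j,k}\mathcal D\Psi_{j,k}^0\|_{L^\infty}\lesssim \|\mathcal D\psi_{k-1}^0\|_{L^\infty}^{1/2}$ by combining \eqref{mikadoprofilebound} with $m=1$ (which yields $\|\mathcal D\Psi_{j,k}^0\|_{L^\infty}\lesssim\N{k}^{-1}$; note $\M{k}\ll\N{k}$ so the $\M{k}$-derivative of $\tilde\varphi_j(\M{k}\cdot)$ is not dominant) with the bound on $\|a_{j,k}\|_{L^\infty}$ just established; this accounts for the first summand on the right of \eqref{iterativeboundsII}. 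For the Leibniz pieces, applying the product rule to the factor $\chi_k\,\Gamma_j(\cdots)$ and the chain rule to $\Gamma_j$, then invoking \eqref{cutoffbounds} with $m=1$ together with the $C^1$-boundedness of $\Gamma_j$ on its domain, gives
\eqn{
\|\grad a_{j,k}\|_{L^\infty}\lesssim \N{k}\|\mathcal D\psi_{k-1}^0\|_{L^\infty}^{1/2}\left(\M{k-1}+\frac{\|\grad\mathcal D\psi_{k-1}^0\|_{L^\infty}}{\|\mathcal D\psi_{k-1}^0\|_{L^\infty}}\right).
}
Multiplying by $\|\Psi_{j,k}^0\|_{L^\infty}\lesssim\N{k}^{-2}$ produces the second summand in \eqref{iterativeboundsII} together with a spurious extra contribution of the form $(\M{k-1}/\N{k})\|\mathcal D\psi_{k-1}^0\|_{L^\infty}^{1/2}$.

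The main (and essentially only) obstacle is absorbing this spurious $\M{k-1}/\N{k}$ factor. Since $\M{k-1}/\N{k}\leq \M{k-1}/\M{k}\leq A^{-(b-1)b^{k-1}}$, which tends to zero and is in particular $\ll 1$ uniformly in $k$ once $A$ is taken sufficiently large (as permitted per \S\ref{parametersubsection}), this contribution is absorbed into the principal $\|\mathcal D\psi_{k-1}^0\|_{L^\infty}^{1/2}$ term with room to spare. Choosing the constant $C_1$ large enough relative to the implicit constants accumulated above then yields \eqref{iterativeboundsII} with the prefactor $\tfrac12 C_1$ on the right, completing the proof.
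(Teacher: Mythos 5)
Your proposal is correct and follows essentially the same route as the paper: bound $\|a_{j,k}\|_{L^\infty}$ and $\|\grad a_{j,k}\|_{L^\infty}$ from Definition~\ref{dataiteratedefinition} via \eqref{Gammabound}, \eqref{cutoffbounds}, and Lemma~\ref{mikadodatapropertieslemma}(4), apply the Leibniz rule together with \eqref{mikadoprofilebound}, and absorb the extra cutoff-derivative term. The only cosmetic difference is that you keep the sharper $\M{k-1}$ for $\|\grad\chi_k\|_{L^\infty}$ and observe it is $\ll\N{k}$, whereas the paper is content with the looser $\M{k}\lesssim\N{k}$ to absorb the same term into the ``$1$'' in \eqref{iterativeboundsII}.
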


\begin{proof}
We begin by estimating the coefficient functions $a_{j,k}$. First, by \eqref{Gammabound} and part 4 of Lemma~\ref{mikadodatapropertieslemma}, one has
\eq{\label{preliminaryabound}
\|a_{j,k}\|_{L^\infty} \lesssim \N{k} \|\mathcal{D}\psi_{k-1}^0\|_{L^\infty}^{1/2}.
}
Then, for the gradient,
\eqn{
\|\nabla a_{j,k}\|_{L^\infty} \lesssim \N{k}\|\mathcal{D}\psi_{k-1}^0\|_{L^\infty}^{1/2} \left( \|\nabla \chi_k\|_{L^\infty} + \|\nabla(\Gamma_j\big(\Id+c_0\frac{\mathcal D\psi^0_{k-1}(x) }{\|\mathcal D\psi^0_{k-1} \|_{L^\infty}}\big))\|_{L^\infty}     \right).
}
The first term is easily estimated with \eqref{cutoffbounds}. For the second we have
$$|\nabla(\Gamma_j\big(\Id+c_0\frac{\mathcal D\psi^0_{k-1}(x) }{\|\mathcal D\psi^0_{k-1} )\|_{L^\infty}}\big))|\lesssim \frac{\|\nabla \mathcal{D}\psi_{k-1}^0\|_{L^\infty}}{\|\mathcal{D}\psi_{k-1}^0\|_{L^\infty}}$$
using \eqref{Gammabound}. We conclude
\eq{\label{gradapreliminarybound}
\|\grad a_{j,k}\|_{L^\infty}&\lesssim \N{k} \|\mathcal D\psi_{k-1}^0\|_{L^\infty}^\frac12\left(\M{k}+\frac{\|\nabla \mathcal{D}\psi_{k-1}^0\|_{L^\infty}}{\|\mathcal{D}\psi_{k-1}^0\|_{L^\infty}}\right).
}

Now we can proceed toward proving \eqref{iterativeboundsI}--\eqref{iterativeboundsII}. From the definition of $\psi_k^0$, the $L^\infty$ boundedness of the mollifier $\phi_k*$, \eqref{preliminaryabound}, and \eqref{mikadoprofilebound}, we have
$$\|\psi_k^0\|_{L^\infty} \leq \sum_j(\|a_{j,k}(x)\|_{L^\infty} \|\Psi_{j,k}^0\|_{L^\infty})\lesssim \N{k}^{-1}\|\mathcal D\psi_{k-1}^0\|_{L^\infty}^{1/2}.$$ Choosing $C_1$ large enough to absorb the constants, we immediately have \eqref{iterativeboundsI}. Towards \eqref{iterativeboundsII}, recall that $\mathcal{D}$ is a first-order differential operator so we may apply the Leibnitz rule to find
$$\|\mathcal D \psi_k^0 \|_{L^\infty} \lesssim  \sum_j(\| a_{j,k}\|_{L^\infty}\|\nabla\Psi_{j,k}^0\|_{L^\infty} + \|\nabla a_{j,k}\|_{L^\infty}\|\Psi_{j,k}^0\|_{L^\infty}).$$
Then \eqref{mikadoprofilebound}, \eqref{preliminaryabound}, and \eqref{gradapreliminarybound} immediately yield
\eqn{
\|\mathcal D \psi_k^0 \|_{L^\infty}&\lesssim \|\mathcal{D}\psi_{k-1}^0\|_{L^\infty}^{\frac12}+\N{k}^{-1}\|\mathcal D\psi_{k-1}^0\|_{L^\infty}^\frac12\left(\M{k}+\frac{\|\nabla \mathcal{D}\psi_{k-1}^0\|_{L^\infty}}{\|\mathcal{D}\psi_{k-1}^0\|_{L^\infty}}\right).
}
Taking $C_1$ larger if needed and using the fact that $\M{k}\lesssim \N{k}$, we arrive at \eqref{iterativeboundsII}.
\end{proof}

Iteratively combining the estimates from Lemma~\ref{principalpartiterativesteplemma} yields the following. Note that lower bounds will be crucial, both because $\|\mathcal D\psi_{k-1}^0\|_{L^\infty}$ appears in the denominator in Definition~\ref{dataiteratedefinition}, and because establishing non-uniqueness will require precise asymptotics.

\begin{proposition}\label{inductivepsiboundsproposition}
With $C_2$ an absolute constant, we have
\eq{\label{psidataupperandlowerbounds}
C_2^{-2}(C_2^2\|\mathcal D\psi^0_0\|_{L^\infty})^{2^{-k}}\leq\|\mathcal D\psi_k^0\|_{L^\infty}\leq C_2^{2}(C_2^{2}\|\mathcal D\psi^0_0\|_{L^\infty})^{2^{-k}}
}
for all $k\geq0$. Furthermore
\eq{\label{psidatabounds}
\|\grad^m\psi^0_k\|_{L^\infty}&\lesssim_m \N{k}^{-1+m}
}
for all $k,m\geq0$.
\end{proposition}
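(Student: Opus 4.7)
The proof proceeds by simultaneous induction on $k$. The base case $k=0$ follows directly from $\psi_0^0=\N{0}\phi_0*\Psi_{1,0}^0$ together with \eqref{mikadoprofilebound}: one computes $\|\grad^m\psi_0^0\|_{L^\infty}\lesssim\N{0}\cdot\N{0}^{-2+m}=\N{0}^{m-1}$, and \eqref{psidataupperandlowerbounds} is trivial at $k=0$ since $2^{-0}=1$.

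For the inductive step, assume both bounds hold at level $k-1$, and first establish the derivative bound \eqref{psidatabounds}. Since convolution with $\phi_k$ is $L^\infty$-bounded ($\|\phi_k\|_{L^1}=1$), the identity $\grad^m(\phi_k*f)=\phi_k*\grad^m f$ reduces matters to bounding $\|\grad^m\sum_j a_{j,k}\Psi_{j,k}^0\|_{L^\infty}$ via Leibniz. The profile contributes $\|\grad^{m_2}\Psi_{j,k}^0\|_{L^\infty}\lesssim\N{k}^{-2+m_2}$ from \eqref{mikadoprofilebound}. For the coefficient $a_{j,k}$, the product rule combined with the Faà di Bruno formula applied to the composition $\Gamma_j\circ(\Id+c_0\mathcal D\psi_{k-1}^0/\|\mathcal D\psi_{k-1}^0\|_{L^\infty})$ yields $\|\grad^{m_1}a_{j,k}\|_{L^\infty}\lesssim_m\N{k}\N{k-1}^{m_1}$; the key inputs are $\|\grad^q\mathcal D\psi_{k-1}^0\|_{L^\infty}\lesssim\N{k-1}^q$ (from the inductive hypothesis \eqref{psidatabounds} at $k-1$), the bound $\|\grad^{m_{11}}\chi_k\|_{L^\infty}\lesssim\M{k-1}^{m_{11}}$, and the inductive lower bound of \eqref{psidataupperandlowerbounds} at $k-1$, which keeps the denominator $\|\mathcal D\psi_{k-1}^0\|_{L^\infty}$ bounded away from zero uniformly in $k$. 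Summing the Leibniz terms and using $\M{k-1}\leq\N{k-1}\leq\N{k}$ gives $\|\grad^m\psi_k^0\|_{L^\infty}\lesssim_m\N{k}^{-1+m}$.

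For the upper bound in \eqref{psidataupperandlowerbounds}, apply \eqref{iterativeboundsII}. The correction factor $\N{k}^{-1}\|\grad\mathcal D\psi_{k-1}^0\|_{L^\infty}/\|\mathcal D\psi_{k-1}^0\|_{L^\infty}$ is controlled using the derivative bound from the previous step (giving $\|\grad\mathcal D\psi_{k-1}^0\|_{L^\infty}\lesssim\N{k-1}$) and the inductive lower bound on $\|\mathcal D\psi_{k-1}^0\|_{L^\infty}$, yielding $O(\N{k-1}/\N{k})$, which is arbitrarily small for $A$ large because $b>1$. Thus $\|\mathcal D\psi_k^0\|_{L^\infty}\leq C_1\|\mathcal D\psi_{k-1}^0\|_{L^\infty}^{1/2}$; iterating and collapsing the geometric series $\sum_{j\geq 0}2^{-j}=2$ gives $\|\mathcal D\psi_k^0\|_{L^\infty}\leq C_1^2\|\mathcal D\psi_0^0\|_{L^\infty}^{2^{-k}}$, which matches the upper bound in \eqref{psidataupperandlowerbounds} provided $C_2\geq C_1$.

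The main obstacle is the lower bound in \eqref{psidataupperandlowerbounds}, since Lemma~\ref{principalpartiterativesteplemma} supplies no lower estimate and one must extract the principal high-frequency part of $\mathcal D\psi_k^0$ directly. Using $\eta_j\cdot\theta_j=0$ (so that the divergence term in $\mathcal D\Psi_{j,k}^0$ is lower-order), a direct computation gives
\[\mathcal D(a_{j,k}\Psi_{j,k}^0)=-2a_{j,k}(x)\N{k}^{-1}\tilde\varphi_j(\M{k}x)\cos(\N{k}(x-x_j)\cdot\eta_j)\,\eta_j\odot\theta_j+\mathrm{l.o.t.},\]
where the lower-order terms either gain a factor $\M{k}/\N{k}$ or involve derivatives of $a_{j,k}$ estimated as above. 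I would then choose a test point $x_*\in\Omega_{k-1}$ at which $\tilde\varphi_1(\M{k}x_*)=1$ and $\cos(\N{k}(x_*-x_1)\cdot\eta_1)=1$; such $x_*$ exists for $A$ large because the pipes at scale $\M{k}^{-1}$ thread densely through the open set $\Omega_{k-1}$ whose structural scale is the much coarser $\M{k-1}^{-1}$, and along each such pipe the cosine attains its maximum on an $\N{k}^{-1}$-periodic subset. At $x_*$, $\chi_k(x_*)=1$, contributions from $j\neq 1$ vanish by part~3 of Lemma~\ref{mikadodatapropertieslemma}, and $|a_{1,k}(x_*)\N{k}^{-1}|\gtrsim\|\mathcal D\psi_{k-1}^0\|_{L^\infty}^{1/2}$ via the lower bound on $\Gamma_1$ from Lemma~\ref{nash-lemma}. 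The mollifier $\phi_k*$ perturbs the cosine by a relative error of order $(\N{k}\cdot\N{k+1}^{-1/3}\N{k}^{-2/3})^2=(\N{k}/\N{k+1})^{2/3}$, negligible for $A$ large, and acts trivially on the slowly varying $\chi_k$ and $\tilde\varphi_j(\M{k}\cdot)$ factors (since $\M{k}^{-1}\gg\N{k+1}^{-1/3}\N{k}^{-2/3}$). This gives $\|\mathcal D\psi_k^0\|_{L^\infty}\geq c_3\|\mathcal D\psi_{k-1}^0\|_{L^\infty}^{1/2}$ for an absolute constant $c_3>0$; iterating and choosing $C_2\geq c_3^{-1}\vee C_1$ closes the induction. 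The technical difficulty concentrated here is the clean extraction of the principal oscillation against its many subleading partners, together with verifying the existence of an appropriate $x_*$.
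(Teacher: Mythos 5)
Your proposal is correct and follows essentially the same strategy as the paper: extract the principal contribution to $\mathcal D\psi_k^0$ (the term where the derivative falls on the sine), show the other contributions (from the mollifier, $\tilde\varphi_j$, $\chi_k$, and $\Gamma_j$) are of smaller order, and obtain the lower bound by evaluating the principal term at a favorable point and invoking the lower bound on $\Gamma_j$ from Lemma~\ref{nash-lemma}. Two small organizational differences are worth noting. For the upper bound you invoke \eqref{iterativeboundsII} directly and iterate, whereas the paper re-uses its decomposition $I_1+I_2+II+III+IV$ and reverses the estimate on $I_1$; both routes give the same conclusion and require the same inputs, but yours is a touch more economical. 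For the lower bound you take some care to insist that the test point $x_*$ lies in $\Omega_{k-1}$ so that $\chi_k(x_*)=1$; the paper states this less carefully (asserting that $\chi_k\equiv1$ on $\Omega_{j,k}$, which does not literally follow from Definition~\ref{Omegadefintion}) and then evaluates at $x=x_j$ without noting the constraint, so your formulation is the tighter one. One caveat on your own wording: since $\eta_j\perp\theta_j$, the cosine is constant along each pipe centerline, so one cannot in general arrange $\tilde\varphi_1(\M{k}x_*)=1$ and $\cos(\N{k}(x_*-x_1)\cdot\eta_1)=1$ simultaneously at the exact centerline. However the cosine oscillates at scale $\N{k}^{-1}$ across the pipe cross-section, which has width $\sim\delta_0\M{k}^{-1}\gg\N{k}^{-1}$, so one can find $x_*$ near the centerline with $\cos=1$ and $\tilde\varphi_1(\M{k}x_*)\geq c>0$, which is all the argument needs. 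This does not affect the correctness of your proof, only the claimed exact equalities.
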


\begin{proof}
First, we establish \eqref{psidataupperandlowerbounds} by induction. Suppose the claim holds for $\psi^0_m$ for all $m=1,2,\ldots,k-1$. We begin with the lower bound in \eqref{psidataupperandlowerbounds} and decompose $\mathcal D\psi_k^0=I+II+III+IV$ depending on where the derivative falls: $I$ when it falls on the sine within $\Psi_{j,k}^0$; $II$ when it falls on $\tilde\varphi_j$ within $\Psi_{j,k}^0$; $III$ when it falls on $\chi_k$ within $a_{j,k}$; and $IV$ when it falls on $\Gamma_j$ within $a_{j,k}$. We start with the dominant contribution $I$, and further decompose to extract the unmollified part:
\eqn{
I=I_1+I_2;\quad I_1&=\N{k}^{-2}\sum_ja_{j,k}\tilde\varphi_j(\M{k}x)\mathcal D(\sin(\N{k}(x-x_j)\cdot\eta_j)\theta_j)\\
I_2&=\varphi_k*I_1-I_1.
}
We compute
\eqn{
I_1&=-2\N{k}^{-1}\sum_ja_{j,k}\tilde\varphi_j(\M{k}x)(\theta_j\odot\eta_j)\cos(\N{k}(x-x_j)\cdot\eta_j).
}
Note that the terms in the sum have disjoint supports $\widetilde\Omega_{j,k}$, so let us 
restrict to a particular one. Recall that within $\Omega_{j,k}$, the cutoff $\chi_k$ is identically $1$. Thus
\begin{equation}\begin{aligned}\label{I1exactformula}
I_1\big|_{\,\Omega_{j,k}}=-2^\frac32|\eta_j|(c_0^{-1}A_{j,k}^{-1}\|\mathcal D\psi^0_{k-1}\|_{L^\infty})^\frac12\Gamma_j(\Id+c_0\frac{\mathcal D\psi^0_{k-1}}{\|\mathcal D\psi^0_{k-1}\|_{L^\infty}})\\
\quad\times\tilde\varphi_j(\M{k}x)(\theta_j\odot\eta_j)\cos(\N{k}(x-x_j)\cdot\eta_j).
\end{aligned}\end{equation}
Recall from \eqref{Gammabound} that $\Gamma_j$ is bounded from below; meanwhile the $\eta_j$ are fixed vectors specified to be orthogonal to $\theta_j$ from Lemma~\ref{nash-lemma}. For $x=x_j$ on the line $\ell_j$, both the cosine and $\tilde{\varphi}$ factors are exactly $1$. We conclude that
\eqn{
\|I_1\|_{L^\infty}\gtrsim\|\mathcal D\psi^0_{k-1}\|_{L^\infty}^\frac12\geq C_2^{-1}(C_2^2\|\mathcal D\psi^0_0\|_{L^\infty})^{2^{-k}}
}
by the induction hypothesis. With a large enough choice of $C_2$, this implies $\|I_1\|_{L^\infty} \geq 100C_2^{-2}(C_2^2\|\mathcal D\psi^0_0\|_{L^\infty})^{2^{-k}}$. It remains to show that the other terms are uniformly much smaller.

To estimate $I_2$, we need to bound the gradient of $I_1$:
\eqn{
\|\grad I_1\|_{L^\infty}&\lesssim \frac{\|\grad\mathcal D\psi^0_{k-1}\|_\infty}{\|\mathcal D\psi^0_{k-1}\|_{L^\infty}^{1/2}}+\|\mathcal D\psi^0_{k-1}\|_{L^\infty}^{1/2}\N{k}.
}
The numerator in the first term is $O(\N{k}^{\frac23}\N{k-1}^{\frac13})$, using the mollifier to absorb the derivatives. The denominator is $1/O(1)$ by the inductive hypothesis. Similarly the second term is $O(\N{k})$. Thus, by a standard mollifier estimate,
\eqn{
\|I_2\|_{L^\infty}&\lesssim \N{k+1}^{-\frac13}\N{k}^{-\frac23}\|\grad I_1\|_{L^\infty}\lesssim (\N{k}/\N{k+1})^\frac13=A^{-\frac13\gamma(b-1)b^k}.
}

Next, consider $II$. Recall that $\tilde\varphi_j$ is a fixed smooth function, so clearly $\grad(\tilde\varphi_j(\M{k}x))$ is $O(\M{k})$. From this and the inductive hypothesis, we have
$$\|II\|_{L^\infty} \lesssim \N{k}^{-1}\M{k}\|\mathcal{D}\psi_{k-1}^0\|_{L^\infty}^{1/2}\lesssim_{C_2,\psi_0^0} \N{k}^{-1}\M{k}=A^{-(\gamma-1)b^k}.$$
The argument for $III$ is identical: when the derivative lands on $\chi_k$, it yields a factor of $\M{k-1}$ due to \eqref{cutoffbounds}, which is even better than the factor $\M{k}$ that appeared in $II$. 

We finish by estimating $IV$. When $\mathcal D$ falls on $\Gamma_j$, the chain rule produces a factor $\grad\mathcal D\psi_{k-1}^0$. As in the analysis of $\grad I_1$ above, one places derivatives on the mollifier to obtain $\|\grad\mathcal D\psi_{k-1}^0\|_{L^\infty}\lesssim \N{k}^\frac23\N{k-1}^{\frac13}$ which yields
$$\|IV\|_{L^\infty} \lesssim (\N{k-1}/\N{k})^\frac13=A^{-\gamma(1-b^{-1})b^k}.$$

To summarize, we have shown
\eqn{
\|\mathcal D\psi_k^0\|_{L^\infty}&\geq\|I_1\|_{L^\infty}-\|I_2\|_{L^\infty}-\|II\|_{L^\infty}-\|III\|_{L^\infty}-\|IV\|_{L^\infty}\\
&\geq100C_2^{-2}(C_2^2\|\mathcal D\psi_0^0\|_{L^\infty})^{2^{-k}}-O_{C_2,\psi_0^0}(A^{-c(b,\gamma)b^k})
}
for some $c(b,\gamma)>0$. Notice that the first term stays bounded from below as $k\to\infty$. Thus, choosing $A$ sufficiently large depending on $\psi_0^0$, $C_2$, $b$, and $\gamma$, the second term becomes negligible and we arrive at the desired lower bound.

To prove the upper bound in \eqref{psidataupperandlowerbounds}, the same upper bounds on $I_2$, $II$, $III$, and $IV$ still suffice. One need only recognize that the estimate on $I_1$ is reversible and one can proceed from \eqref{I1exactformula} to obtain $\|I_1\|_{L^\infty}\lesssim C_2(C_2^{2}\|\mathcal D\psi_0^0\|_{L^\infty})^{2^{-k}}$.

To see \eqref{psidatabounds}, one proceeds directly by differentiating the definition. When all the derivatives fall onto the sine, the upper bound is as claimed (analogously to $I_1$). When derivatives fall onto any other part, the upper bound is strictly smaller, which can be easily seen by arguing as above for $II$--$IV$. For instance, when derivatives fall onto $\Gamma_j$, the resulting higher derivatives of $\psi_{k-1}^0$ can be adequately estimated using the mollifier and the inductive hypothesis \eqref{psidataupperandlowerbounds} as was done above for $\grad\mathcal D\psi_{k-1}^0$.
\end{proof}

The velocity potentials $\psi_k^0$ will be the building blocks for the initial velocity $U^0$. The following proposition paves the way to properly defining $U^0$ as a distribution and proving that it lies in $BMO^{-1}$.

\begin{proposition}\label{dataregularityproposition}
With $\psi_k^0$ as in Definition~\ref{dataiteratedefinition}, the sum $\sum_{k=0}^\infty\curl\psi_k^0$ converges in $L^1(\Td)$. Furthermore,
\eqn{
\sum_{k=0}^\infty\curl\psi_k^0\in BMO.
}
\end{proposition}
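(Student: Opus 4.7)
The plan is to prove the two claims separately, with the bulk of the work devoted to the BMO membership.

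For the $L^1(\Td)$ convergence, the strategy is to combine support and $L^\infty$ information on each term. Tracking supports through Definition~\ref{dataiteratedefinition}, the coefficient $a_{j,k}$ is confined to $\{\chi_k \neq 0\} \subseteq \widetilde\Omega_{k-1}$, while $\Psi_{j,k}^0$ is supported on the pipes inside $\Omega_{j,k}$. Since the mollifier $\phi_k$ acts at length scale $N_{k+1}^{-1/3}N_k^{-2/3} \ll \delta_0$ (for $A$ sufficiently large), the convolution does not push the support out of $\widetilde\Omega_{k-1} \cap \bigcup_j \widetilde\Omega_{j,k} = \widetilde\Omega_k$. Combining this inclusion with $\|\curl \psi_k^0\|_{L^\infty} \lesssim 1$ (from Proposition~\ref{inductivepsiboundsproposition} with $m=1$) and $|\widetilde\Omega_k| \leq 2^{-k}|\Td|$ from Lemma~\ref{supportlemma} gives $\|\curl \psi_k^0\|_{L^1(\Td)} \lesssim 2^{-k}$, so the series converges absolutely in $L^1(\Td)$.

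For the BMO bound, fix a cube $Q \subseteq \Rd$ and set $f_k = \curl \psi_k^0$. By the triangle inequality it suffices to show $\sum_k \fint_Q |f_k - (f_k)_Q|\,dx \lesssim 1$ uniformly in $Q$. I would split the sum according to how $\ell(Q)^{-1}$ compares to the scales $M_k$ and $N_k$: let $k_* = k(Q)$ as in Lemma~\ref{supportlemma}, and let $k_*^N = \inf\{k \in \mathbb{N} : N_k \geq C_0 \ell(Q)^{-1}\}$. In the \emph{high-frequency regime} $k \geq k_*$, the sharp support estimate from Lemma~\ref{supportlemma} together with $\|f_k\|_{L^\infty} \lesssim 1$ yields $\fint_Q |f_k - (f_k)_Q| \leq 2\fint_Q |f_k| \lesssim 2^{-(k-k_*)}$, and the geometric series sums to $O(1)$. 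In the \emph{low-frequency regime} $k < k_*^N$, the function $f_k$ is nearly constant on $Q$: using $\|\grad f_k\|_{L^\infty} \lesssim N_k$ from Proposition~\ref{inductivepsiboundsproposition}, we get $\fint_Q |f_k - (f_k)_Q| \lesssim \ell(Q) N_k$, and by the super-exponential growth of $N_k$ the sum is dominated by its largest term $\ell(Q) N_{k_*^N - 1} \lesssim 1$ by definition of $k_*^N$. In the \emph{intermediate regime} $k_*^N \leq k < k_*$, I would use the crude bound $\fint_Q |f_k - (f_k)_Q| \lesssim \|f_k\|_{L^\infty} \lesssim 1$.

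The main obstacle is controlling the intermediate regime, where neither the support estimate nor the ``nearly constant'' bound is effective. The resolution is arithmetic: since $N_k \sim M_k^\gamma$ and the scales grow super-exponentially with $M_{k+1} \sim M_k^b$, the condition $M_k < C_0 \ell(Q)^{-1} \leq N_k$ forces $k$ into an interval of cardinality at most $\lceil \log_b \gamma \rceil + 1$. This is finite precisely because of the key inequality $\gamma < b$ built into the parameter constraint \eqref{parameterinequality}. Summing the three regimes then gives a uniform bound on $\fint_Q |\varphi - \varphi_Q|$, proving $\varphi = \sum_k \curl \psi_k^0 \in BMO(\Td)$.
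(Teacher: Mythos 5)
Your argument is correct and arrives at the same conclusion, but organizes the BMO estimate differently from the paper. The paper uses a two-regime split at the single threshold $k(Q)$: for $k\geq k(Q)-1$ it uses the sharp support estimate \eqref{cubeintersectionvolumeestimate}, and for $k\leq k(Q)-2$ it bounds the oscillation by $N_k\ell(Q)$, sums to get $O(N_{k(Q)-2}\ell(Q))$, and then invokes $\ell(Q)\lesssim M_{k(Q)-1}^{-1}$ to arrive at $N_{k(Q)-2}^{1-b/\gamma}=O(1)$; the parameter relation $\gamma<b$ enters precisely through this negative power. You instead introduce the second threshold $k_*^N$ and a third intermediate regime $k_*^N\leq k<k_*$, in which you bound each term crudely by $O(1)$ and observe that the number of indices with $M_k<C_0\ell(Q)^{-1}\leq N_k$ is $O(\log_b\gamma)$ because $N_k\sim M_k^\gamma$ while $M_{k+1}\sim M_k^b$; the relation $\gamma<b$ enters through the finiteness of this count. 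Both are valid and use the same ingredients (Lemma~\ref{supportlemma}, the $L^\infty$ and gradient bounds from Proposition~\ref{inductivepsiboundsproposition}, and $\gamma<b$); the paper's two-regime version is slightly more economical, while your three-regime split makes the role of each scale comparison a bit more transparent. One small imprecision to note: when you argue that mollification by $\phi_k$ does not enlarge the support past $\widetilde\Omega_k$, the relevant comparison is between the mollifier scale $N_{k+1}^{-1/3}N_k^{-2/3}$ and the margin $\delta_0 M_k^{-1}$ (not $\delta_0$), but this comparison certainly holds for large $A$ since $N_k\gg M_k$. Also, as the paper notes, one should first use periodicity to reduce to cubes with $\ell(Q)\lesssim M_0^{-1}$ so that the thresholds are well-defined and $\geq1$.
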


\begin{proof}
For brevity we write $\zeta_k=\curl\psi_k^0$. By \eqref{psidatabounds}, we have $\|\zeta_k\|_{L^\infty}=O(1)$. It follows from this and Lemma \ref{supportlemma} that $\|\zeta_k\|_{L^1}\lesssim|\supp{\zeta_k}| \lesssim 2^{-k}|\mathbb{T}^3|$. Convergence in $L^1$ then follows because of the absolute convergence of the series
$$\sum_{k} \|\zeta_k\|_{L^1(\mathbb{T}^3)}<\infty.$$

To show $\zeta=\sum_k\zeta_k$ lies in $BMO$, we recall the definition from \S\ref{functionspacessection},
\eqn{
\|\zeta\|_{BMO}=\sup_{Q\subset\Rd}\fint_Q|\zeta-\zeta_Q|.
}
By periodicity it is easy to see that it suffices to consider cubes $Q\subset\Rd$ with side length $\ell(Q)\in(0,\M{0}^{-1})$, say. Fix any such $Q$ and recall $k(Q)=\inf\{k\in\mathbb N:\M{k}\geq C_0\ell(Q)^{-1}\}$ from Lemma~\ref{supportlemma}. We estimate
\eqn{
\fint_Q|\zeta-\zeta_Q|&\leq 2\fint_Q\sum_{k\geq k(Q)-1}|\zeta_k|+\sum_{k\leq k(Q)-2}\|\zeta_k-(\zeta_k)_Q\|_{L^\infty(Q)}=I+II.
}
To bound $I$, observe that the amplitude of $\zeta_k$ is $O(1)$ from \eqref{psidatabounds}, while the volume of the support can be estimated with \eqref{cubeintersectionvolumeestimate} in Lemma~\ref{supportlemma}. We arrive at
\eqn{
2\fint_Q\sum_{k\geq k(Q)-1}|\zeta_k|&\lesssim \sum_{k\geq k(Q)-1}\frac{|Q\cap\widetilde\Omega_k|}{|Q|}\leq\sum_{k\geq k(Q)-1}2^{-(k-k(Q))}\lesssim 1.
}
Now consider $II$. For any $x\in Q$ and $k\leq k(Q)-2$,
\eqn{
|\zeta_k(x)-(\zeta_k)_Q|&=|\fint_Q(\zeta_k(x)-\zeta_k(y))dy|\\
&\leq\fint_Q\|\grad \zeta_k\|_{L^\infty}\ell(Q)dy\\
&\lesssim \N{k}\ell(Q)
}
using \eqref{psidatabounds}. Summing over $k\leq k(Q)-2$, we obtain the upper bound $O(\N{k(Q)-2}\ell(Q))$. By definition of $k(Q)$, in particular the minimality, we have $\M{k(Q)-1}<C_0\ell(Q)^{-1}$ and conclude
\eqn{
II\lesssim \N{k(Q)-2}\ell(Q)\lesssim \N{k(Q)-2}\M{k(Q)-1}^{-1}=\N{k(Q)-2}^{1-b/\gamma}
}
which is bounded, recalling from \S\ref{parametersubsection} that $\gamma<b$.
\end{proof}

Finally we may define the velocity initial data.

\begin{define}\label{initialdatadefinition}
With $\psi_k^0$ as in Definition~\ref{dataiteratedefinition}, we define the data
\eq{\label{vdatadefinition}
U^0=\curl\sum_{k=0}^\infty \curl \psi^0_k.
}
\end{define}

This is well-defined and in $BMO^{-1}$ because of Proposition~\ref{dataregularityproposition}, with the outer $\curl$ taken in the distributional sense.

\subsection{Construction of the Principal Parts of the Solutions}\label{principalpartconstructionsubsection}

The initial data constructed in Definition~\ref{initialdatadefinition} has the essential property that at each frequency level $k$, there are two different continuations consistent with the Navier--Stokes equations up to a small error: one with dissipation dominating and another with the inverse cascade dominating. (See \S\ref{strategysubsection} for further discussion.) We shall call these two alternatives $v_k$ and $\overline v_k$ respectively. By interposing them in different ways we can construct two approximate Navier--Stokes solutions $v^{(1)}$ and $v^{(2)}$ with data $U^0$.

\begin{define}\label{principalpartdefinition}
Recalling the definitions from \S\ref{principalpartsection}, we define for $k\geq0$ the heat-dominated evolution
\eqn{
v_k(t,x)=\phi_k*\curl\curl\sum_ja_{j,k}(x)\Psi_{j,k}(x,t)
}
as well as the inverse cascade-dominated evolution
\eqn{
\overline v_k(t,x)=\frac12\N{k+1}^{-2}\mathbb P\div\sum_j A_{j,k+1}|\eta_j|^2e^{-2|\eta_j|^2 \N{k+1}^2 t}a_{j,k+1}^2(x)\theta_j\otimes\theta_j.
}
We occasionally write $\psi_k\coloneqq\phi_k*\sum_ja_{j,k}\Psi_{j,k}$, which are the corresponding continuations forward in time from data $\psi_k^0$. Then we define the pair of vector fields
\eqn{
    v^{(1)}=\sum_{k\geq0\text{ even}}v_k+\sum_{k\geq0\text{ odd}}\overline v_k
}
and
\eqn{
    v^{(2)}=\sum_{k\geq0\text{ odd}}v_k+\sum_{k\geq0\text{ even}}\overline v_k.
}
\end{define}

\begin{remark}\label{curlcurlremark}
The operator $\curl\curl=-\Delta+\grad\div$ has the advantage that $v_k=\curl\curl\psi_k$ is both divergence-free and of the form $v_k=\div\mathcal D\psi_k$ where $\mathcal D\psi_k$ is a symmetric tensor. The latter property is essential for our eventual use of Lemma~\ref{nash-lemma}.
\end{remark}

\begin{remark}\label{dataremark}
In \S\ref{proofoftheoremsection}, we give a precise proof that the two solutions constructed have identical data. Nonetheless, it is enlightening to observe at this point that this is true formally for the principal parts $v^{(1)}$ and $v^{(2)}$. Indeed, we compute
\eqn{
\overline v_k(0,x)&=\frac12\N{k+1}^{-2}\mathbb P\div \sum_j A_{j,k+1}|\eta_j|^2a_{j,k+1}^2(x)\theta_j\otimes\theta_j\\
&=c_0^{-1}\|\mathcal D\psi_{k}^0\|_{L^\infty}\mathbb P\div\Big(\chi_{k+1}\sum_j\Gamma_j^2(\Id+c_0\frac{\mathcal D\psi_k^0}{\|\mathcal D\psi_k^0\|_{L^\infty}})\theta_j\otimes\theta_j\Big)\\
&=c_0^{-1}\|\mathcal D\psi_{k}^0\|_{L^\infty}\mathbb P\div(\chi_{k+1}(\Id+c_0\frac{\mathcal D\psi_k^0}{\|\mathcal D\psi_k^0\|_{L^\infty}}))\\
&=\mathbb P\div(\chi_{k+1}\mathcal D\psi_k^0)\\
&=\curl\curl\psi_k^0
}
where we have used Lemmas~\ref{nash-lemma} and \ref{Rlemma}, the fact that $\mathbb P\div$ annihilates scalar function multiples of the identity, and that $\chi_{k+1}$ is identically equal to $1$ on $\supp\mathcal D\psi_k^0$. Meanwhile, the fact that $v_k(0,x)=\curl\curl\psi_k^0$ is immediate from the construction in Definitions~\ref{dataiteratedefinition} and \ref{principalpartdefinition}. It follows that formally we have the equality
\eqn{
v^{(i)}|_{t=0}=\sum_{k\geq0}\curl\curl\psi^0_k=U^0
}
for both $i=1$ and $2$.

\end{remark}

Later in \S\ref{perturbationsection}, we shall see that $v^{(1)}$ and $v^{(2)}$ solve the Navier--Stokes up to a small subcritical error and that the error can be corrected by a small perturbation. Before doing so, we need some basic estimates on their sizes.

\begin{proposition}\label{vestimatesproposition}
The two vector fields $v^{(1)}$ and $v^{(2)}$ are divergence-free and obey the estimates
\eq{\label{vweightedlinftybound}
\|\grad^mv^{(i)}(t)\|_{L^\infty(\Td)}\lesssim_mt^{-\frac {(m+1)}2}e^{-\N{0}^2 t/O_m(1)}
}
for all $m\geq0$ and $t>0$. Moreover
\begin{equation}\label{vl2intimelinftyinspacebound}
\|v^{(i)}\|^2_{L^2([t_1,t_2],dt;L^\infty)}+\|v^{(i)}\|_{L^1([t_1,t_2],t^{-\frac12}dt;L^\infty)}\lesssim 1+(\log A)^{-1}\log(t_2/t_1)
\end{equation}
for $0<t_1\leq t_2\leq1$, where one will recall the large parameter $A$ from \S\ref{parametersubsection}.
\end{proposition}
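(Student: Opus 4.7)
The plan is to estimate each summand $v_k$ and $\overline v_k$ separately, then sum using the super-geometric growth $\N{k}\sim A^{\gamma b^k}$. Divergence-freeness is immediate from the construction: $v_k=\phi_k*\curl\curl(\cdots)$ is a double curl, so $\div v_k=0$; and $\overline v_k$ lies in the range of $\mathbb P\div$, hence is divergence-free.

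For the pointwise bound \eqref{vweightedlinftybound}, I first establish by induction on $k$ (in parallel with the argument of Proposition~\ref{inductivepsiboundsproposition}) the auxiliary estimate $\|\grad^p a_{j,k}\|_{L^\infty}\lesssim_p \N{k}\M{k}^p$. The factor $\M{k}^p$ captures the effective frequency scale of $a_{j,k}$: derivatives hitting $\chi_k$ produce factors $\lesssim \M{k-1}$, while those propagating through $\Gamma_j$ via the chain rule yield higher derivatives of the normalized $\mathcal D\psi_{k-1}^0$, which are controlled precisely because of the mollifier $\phi_{k-1}$ at the Nash--Moser-type length scale $\N{k}^{-1/3}\N{k-1}^{-2/3}$. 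Applying Leibniz to $v_k=\phi_k*\curl\curl\sum_j a_{j,k}\Psi_{j,k}$ and invoking \eqref{mikadoprofilebound}, the fast oscillation $\N{k}\gg\M{k}$ dominates each derivative, yielding $\|\grad^m v_k(t)\|_{L^\infty}\lesssim_m \N{k}^{m+1}e^{-c\N{k}^2 t}$ for a small absolute $c>0$. Analogously, $\|\grad^m\overline v_k(t)\|_{L^\infty}\lesssim_m \M{k+1}^{m+1}e^{-c\N{k+1}^2 t}\leq \N{k+1}^{m+1}e^{-c\N{k+1}^2 t}$, where care is needed since $\mathbb P\div$ is not $L^\infty$-bounded but acts as a Fourier multiplier with norm $\sim \M{k+1}$ on the frequency support of $a_{j,k+1}^2\theta_j\otimes\theta_j$ (via Lemma~\ref{bernsteinlemma} plus Schwartz-class decay of $P_N(a_{j,k+1}^2)$ for $N\gg\M{k+1}$). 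The elementary inequality $N^{m+1}e^{-cN^2t}\lesssim_m t^{-(m+1)/2}e^{-cN^2t/2}$ then puts each term in the right form, and super-geometric lacunarity collapses the tail $\sum_k e^{-c\N{k}^2 t/2}$ into $e^{-c\N{0}^2 t/O_m(1)}$.

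For the integral bound \eqref{vl2intimelinftyinspacebound}, the per-mode contribution is $O(1)$, since $\int_0^\infty \N{k}^2 e^{-c\N{k}^2 t}\,dt=O(1)$ and $\int_0^\infty t^{-1/2}\N{k}e^{-c\N{k}^2 t}\,dt=O(1)$, with analogous bounds for $\overline v_k$. Naive Minkowski summation diverges, so the crucial observation is that at any fixed $t$ only a single dyadic scale is active: modes with $\N{k}^2 t\ll 1$ have amplitude $\lesssim \N{k}$ and sum geometrically to the largest such $\N{k}$, while modes with $\N{k}^2 t\gg 1$ are exponentially small. Splitting $[t_1,t_2]$ into bands on which one mode dominates reduces the total to counting $k$ with $\N{k}^{-2}\in[t_1,t_2]$. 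A direct calculation using $\log\N{k}\sim\gamma b^k\log A$ gives this count $\leq\log_b(\log t_1^{-1}/\log t_2^{-1})+O(1)$. Applying $\log_b(1+y)\leq y/\log b$ (from $\ln(1+y)\leq y$) handles the regime $\log t_2^{-1}\gtrsim\log A$, directly producing $(\log A)^{-1}\log(t_2/t_1)$; in the complementary regime one uses $\log_b X\leq X$ (valid for $X>0$ when $b>e$, which holds since $b>5$ by \eqref{parameterinequality}) together with $\log t_1^{-1}=\log(t_2/t_1)+\log t_2^{-1}$ to absorb everything into $1+(\log A)^{-1}\log(t_2/t_1)$.

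The principal technical hurdle I anticipate is the auxiliary derivative estimate $\|\grad^p a_{j,k}\|_{L^\infty}\lesssim_p\N{k}\M{k}^p$: iterating the chain rule through $\Gamma_j$ produces progressively higher-order derivatives of $\mathcal D\psi_{k-1}^0$ which must be balanced against the mollifier $\phi_{k-1}$, and the $\N{k}$ prefactor must be tracked precisely so that the final bound is consistent with the critical scaling. The remaining ingredients---exponential decay estimates, geometric summation over lacunary scales, and the combinatorial bookkeeping producing the $(\log A)^{-1}$ factor---are standard once the per-mode estimates are secured.
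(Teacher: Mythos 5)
Your per-mode estimates are correct, including the frequency-localization route for the $\overline v_k$ bound, which is a legitimate and arguably sharper alternative to the paper's $C^{\beta_1}$ workaround (the authors' footnote acknowledges exactly this). However, the summation step for \eqref{vweightedlinftybound} contains a genuine gap. The uniform pointwise replacement $\N{k}^{m+1}e^{-c\N{k}^2 t}\lesssim t^{-(m+1)/2}e^{-c\N{k}^2 t/2}$ is very lossy for the low-frequency modes $\N{k}\ll t^{-1/2}$, where the left-hand side is only $\approx\N{k}^{m+1}$ but the right-hand side is $\approx t^{-(m+1)/2}$. The claimed collapse $\sum_k e^{-c\N{k}^2 t/2}\lesssim e^{-c\N{0}^2 t/O_m(1)}$ is false: for $t\leq\N{0}^{-2}$ the right-hand side is $\Theta(1)$, while the left-hand side is at least $\#\{k:\N{k}^2 t\lesssim1\}\sim\log_b\big(\log t^{-1}/\log A\big)$, which is unbounded as $t\to0$. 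Your chain of estimates therefore only yields $t^{-(m+1)/2}\log_b\log t^{-1}$, a divergent loss over the claim. The fix, and what the paper does, is to sum $\sum_k\N{k}^{m+1}e^{-c\N{k}^2 t}$ directly and note it is controlled by its largest summand near $\N{k}\sim t^{-1/2}$: to the left of the peak the ratio of consecutive summands exceeds $e^{-c}A^{\gamma(b-1)(m+1)}\gg1$, and to the right the Gaussians decay faster than any geometric rate.

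There is also a gap in the case analysis for the mode count in \eqref{vl2intimelinftyinspacebound}. In the complementary regime $\log t_2^{-1}\lesssim\log A$, applying $\log_b X\leq X$ to $X=\log t_1^{-1}/\log t_2^{-1}$ yields $1+\log(t_2/t_1)/\log t_2^{-1}$; since $\log t_2^{-1}$ is now small, the second term is \emph{not} $\lesssim(\log A)^{-1}\log(t_2/t_1)$ (the required inequality $\log(t_2/t_1)/\log t_2^{-1}\lesssim\log(t_2/t_1)/\log A$ holds exactly in the \emph{other} regime). The repair is to note that in this regime no mode has $\N{k}\leq t_2^{-1/2}$, so the count is actually $\leq\log_b\big(\log t_1^{-1}/(2\gamma\log A)\big)+1$, and only \emph{then} apply $\log_b X\leq X$ together with $\log t_2^{-1}\lesssim\log A$. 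Alternatively, the paper's cruder observation that $\N{k+1}/\N{k}\geq A^{\gamma(b-1)}$ yields $\#\{k:\N{k}\in[t_2^{-1/2},t_1^{-1/2}]\}\leq1+(2\gamma(b-1)\log A)^{-1}\log(t_2/t_1)$ with no case distinction. Your reduction to this count (each band $[\N{k+1}^{-2},\N{k}^{-2}]$ contributing $O(1)$ to the weighted integrals) is correct and matches the paper's decomposition into $I$, $II$, $III$.
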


We remark that we almost never take advantage of the exponential factor in \eqref{vweightedlinftybound}, since all of the nontrivial behavior of $v^{(i)}$ is confined to times $t\in[0,\N{0}^{-2}]$.

\begin{proof}
The divergence-free property is immediate from Definition~\ref{principalpartdefinition} because the constituent parts of $v^{(i)}$, namely $v_k$ and $\overline v_k$, appear inside of $\curl$ and $\mathbb P$, respectively.

Next we proceed to the estimates, suppressing the dependence of $v^{(i)}$ on $i$ for conciseness. We recall the coefficient functions $a_{j,k}$ from Definition~\ref{dataiteratedefinition}. With \eqref{psidatabounds}, \eqref{cutoffbounds}, and \eqref{GammaboundII}, one can use the Leibnitz rule to estimate
\eq{\label{abounds}
\|\grad^ma_{j,k}\|_{L^\infty}&\lesssim_m \N{k-1}^m\N{k}
}
for $m\geq0$. It follows from \eqref{abounds} and $\eqref{mikadoprofilebound}$ that
\eq{\label{vkbounds}
\|\grad^mv_k\|_{L^\infty}&\lesssim \N{k}^{1+m}e^{-\N{k}^{2}t}.
}
For $\overline v_k$, one runs into the difficulty that $\mathbb P$ is not bounded on $L^\infty$. Instead\footnote{We expect this difficulty could be avoided with more careful analysis, but doing so is not necessary in the present work. Note, importantly, that the resulting loss of $\N{k}^{\beta_1}$ does not exist at $t=0$, as one can see from the identity $\overline v_k(0,x)=v_k(0,x)$.}, we carry out the estimate in the H\"older space $C^{\beta_1}$ where $\beta_1=\frac{b-1}4\wedge\frac12$. By \eqref{cutoffbounds}, \eqref{abounds}, and \eqref{psidatabounds},
\eq{\label{vkbarbounds}
\|\grad^m\overline v_k\|_{L^\infty}&\lesssim\|\grad^m\overline v_k\|_{C^{\beta_1}}\lesssim \N{k}^{1+{\beta_1}+m} e^{-\N{k+1}^{2}t}.
}
Note that the homogeneous $C^{\beta_1}$ norm controls $L^\infty$ in this case because $\overline v_k$ has zero average on $\Td$. To see \eqref{vweightedlinftybound}, we combine the bounds on $v_k$ and $\overline v_k$ to find
\eq{\label{quack}
\|\grad^mv(t)\|_{L_x^\infty}&\lesssim_m \sum_{k\geq0}(\N{k}^{1+m}e^{-\N{k}^2t}+\N{k}^{1+{\beta_1}+m} e^{-\N{k+1}^{2}t}).
}
We analyze the two terms separately. Because $\N{k}$ grows rapidly, the first term in the sum is controlled by the largest summand; when $t\leq \N{0}^{-2}$, the largest possible value occurs near $\N{k}=t^{-\frac12}$, leading to a sum of $O(t^{-(1+m)/2})$. When $t>\N{0}^{-2}$, the summands decrease monotonically leading to an upper bound of $\N{0}^{1+m}e^{-\N{0}^2t}$. Using that $t\N{0}^2>1$, this is controlled by $O_m(t^{-(1+m)/2}e^{-\N{0}^2t/O_m(1)})$ due to the exponential decay. This yields the desired bound \eqref{vweightedlinftybound} for the first term in \eqref{quack}. The second is estimated in the same way: the sum is controlled by its largest contribution which occurs when $\N{k+1}$ is near $t^{-\frac12}$. In total, one finds that \eqref{quack} leads to
\eqn{
\|\grad^mv(t)\|_{L^\infty}&\lesssim_mt^{-\frac{1+m}2}+\min\big\{t^{-\frac{1+{\beta_1}+m}{2b}},\N{0}^{1+\beta_1+m}e^{-\N{1}^2t}\big\}.
}
By definition of $\beta_1$, we have $\frac{1+{\beta_1}+m}{2b}\leq\frac{1+m}2$. It follows that when $t\leq \N{0}^{-2}$, $t^{-\frac{1+{\beta_1}+m}{2b}}$ can be absorbed into the first term. When $t>\N{0}^{-2}$, we have
\eqn{
\N{0}^{1+\beta_1+m}e^{-\N{1}^2t}&\lesssim_m \N{0}^{1+\beta_1+m}(\N{1}^2t)^{-\frac{1+m}2}e^{-\N{0}^2t}=\left(\frac{\N{0}}{\N{1}}\right)^{m}\frac{\N{0}^{1+\beta_1}}{\N{1}}t^{-\frac{1+m}2}e^{-\N{0}^2t}.
}
Recall that $\N{0}<\N{1}$ and, more strongly, $\N{0}^{1+\beta_1}<\N{1}$ because $\beta_1<b-1$. We conclude that the second contribution in \eqref{quack} can be absorbed into $t^{-\frac{1+m}2}$ as well. This completes the proof of \eqref{vweightedlinftybound}.

Next we turn to \eqref{vl2intimelinftyinspacebound}. We remark that it suffices to prove the $L^1$ estimate because the $L^2$ estimate is immediate by H\"older's inequality, combining the $L^1$ bound with \eqref{vweightedlinftybound}. We decompose
\eqn{
\|v\|_{L^1_t([t_1,t_2],t^{-\frac{1}{2}}dt;L^\infty_x)} &\leq \sum_{k}\int_{t_1}^{t_2}\left(\|v_k(t)\|_{L^\infty}\mathbf{1}_{t\leq \N{k}^{-2}}+\|v_k(t)\|_{L^\infty}\mathbf{1}_{t>\N{k}^{-2}}+\|\overline v_k(t)\|_{L^\infty}\right)t^{-\frac{1}{2}}dt\\
&\lesssim \sum_{k: \N{k} \leq t_1^{-1/2}}\int_{t_1}^{\N{k}^{-2}\wedge t_2} \N{k} t^{-\frac12}dt + \sum_{k: \N{k}\geq t_2^{-1/2}} \int_{\N{k}^{-2}\vee t_1}^{t_2} \N{k} e^{-\N{k}^2t} t^{-\frac12}dt\\
&\quad+\sum_k\int_{t_1}^{t_2}\N{k}^{1+\beta_1}e^{-\N{k+1}^2t}t^{-\frac12}dt.
}
Let $I$, $II$, and $III$ be the three terms in the final lines. For $I$ we simply use the fundamental theorem of calculus to find
$$I \leq \sum_{k: \N{k} \leq t_1^{-1/2}} \N{k} (\N{k}^{-2}\wedge t_2)^{1/2}\leq \sum_{k: \N{k} \leq t_2^{-1/2}} \N{k} t_2^{1/2} + \sum_{k: t_2^{-1/2} \leq \N{k} \leq t_1^{-1/2}} 1.$$
Clearly the first contribution is $O(1)$, while the second is the number of $\N{k}$ in the interval $(t_2^{-1/2}, t_1^{-1/2})$. We estimate this in a wasteful way: $\N{k}$ is more lacunary than the geometric sequence with ratio $A^{b-1}$ because $\N{k+1}/\N{k}=A^{\gamma(b^{k+1}-b^k)}\geq A^{\gamma(b-1)}$. Therefore,
\eqn{
\#\{k:\N{k}\in[t_2^{-\frac12},t_1^{-\frac12}]\}\leq\log_{A^{\gamma(b-1)}}\left(\frac{t_1^{-1/2}}{t_2^{-1/2}}\right)=(2\gamma(b-1)\log A)^{-1}\log(t_2/t_1)
}
which yields the desired estimate on $I$. For $II$, we have
\eqn{ II &\leq \sum_{k: \N{k} \geq t_2^{-1/2}} \N{k}^{-1}(\N{k}^{-2}\vee t_1)^{-\frac12}e^{-\N{k}^2(\N{k}^{-2}\vee t_1)}\\
&\lesssim \sum_{k: t_2^{-\frac12}\leq \N{k}\leq t_1^{-\frac12}}1+\sum_{k:\N{k}>t_1^{-\frac12}}\N{k}^{-1}t_1^{-\frac12}e^{-\N{k}^2t_1}
}
which is once again $O(1+\#\{k:\M{k}\in[t_2^{-\frac12},t_1^{-\frac12}]\})$, and we conclude as in $I$. For $III$, we bound $t^{-\frac12}$ in the trivial way and evaluate the integral to find
\eqn{
III&\lesssim\sum_k\N{k}^{1+\beta_1}\N{k+1}^{-2}t_1^{-\frac12}e^{-\N{k+1}^2t_1}\lesssim t_1^{\frac12-\frac{1+\beta_1}{2b}}.
}
Recall that $\beta_1<b-1$ by definition so the power is positive. As we assumed $t_1\leq1$, we conclude that $III=O(1)$.
\end{proof}

\section{Construction of the Perturbation}\label{perturbationsection}

To summarize our progress to this point, we have constructed two vector fields $v^{(1)}$ and $v^{(2)}$ with common initial data $U^0$ and reasonable estimates for $t>0$. The purpose of this section is to show that they nearly solve the Navier--Stokes equations and can be perturbed into exact solutions.

\subsection{Estimates on the Residual}\label{residualsubsection}

For $i\in\{1,2\}$, we shall define the residuals $F^{(i)}\in C^\infty((0,1]\times\Td;S^{3\times3})$ in divergence form which satisfy
\eq{\label{Frequirement}
-\mathbb P\div F^{(i)}=\dd_tv^{(i)}-\Delta v^{(i)}+\mathbb P\div v^{(i)}\otimes v^{(i)}.
}
Note that $\mathbb P\div$ does not have a unique right inverse because $F^{(i)}$ has too many degrees of freedom. In fact, an arbitrary choice of $F^{(i)}$ satisfying \eqref{Frequirement} would probably lack the required smallness to complete the construction. In the following proposition, we fix a suitable pair $F^{(1)}$, $F^{(2)}$.

In order to state the proposition, we introduce two important parameters: the subcriticality parameter $\alpha\in(0,\frac18)$ and the H\"older exponent $\kappa\in(0,\frac12 - \frac1{{2\gamma}}-2\alpha)$. We also recall the growth rate $A$ for the frequency scales $\N{k},\M{k}$ from \S\ref{parametersubsection}. The objective is to find an $F^{(i)}$ that can be estimated in, say, $L^\infty$ in such a way that the upper bound has slightly subcritical dependence on time. There are two types of terms that appear: those with subcritical \emph{amplitude} with an upper bound of the form $\N{k}^{1-c}\exp(-\N{k}^2t)$, and those with subcritical \emph{decay} with an upper bound similar to $\N{k}\exp(-\N{k}^{2+c}t)$ for some $c>0$. In either case, we can extract some smallness in $C^{1,\kappa}$ for $\kappa>0$ as above with a subcritical weight in time.

\begin{proposition}\label{Festimateproposition}
For any $\epsilon_0>0$ and all sufficiently large $A>1$ (depending on $\epsilon_0$), there exist $F^{(1)},\,F^{(2)}\in C^\infty((0,1]\times\Td;S^{3\times3})$ obeying \eqref{Frequirement} and
\eqn{
\sup_{t\in(0,1]}\big(t^{1-\alpha}\|F^{(i)}\|_{L^\infty}+t^{\frac32-\alpha}\|\grad F^{(i)}\|_{C^\kappa}\big)&\leq\epsilon_0,\quad i\in\{1,2\}.
}
\end{proposition}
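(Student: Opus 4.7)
The plan is to substitute the explicit formulas for $v^{(i)}=\sum v_k+\sum\overline v_k$ into the left-hand side of \eqref{Frequirement}, carry out the main cancellations, and then define $F^{(i)}$ by applying an antidivergence operator $\mathcal R$ (of the kind from Appendix~\ref{CIappendix}) to each remaining piece. The computation splits into five families of terms, and the work is to show each family is small in the required subcritical norms after taking $A$ large.

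\textbf{Cancellations built into the construction.} First, the leading contribution to $\dd_t v_k - \Delta v_k$ vanishes: the time derivative of $e^{-|\eta_j|^2 \N{k}^2 t}$ produces $-|\eta_j|^2 \N{k}^2$, which is exactly cancelled by $-\Delta$ applied to the oscillating factor $\sin(\N{k}(x-x_j)\cdot \eta_j)$. Second, $\dd_t\overline v_k$ was defined precisely to cancel the low-frequency zero-mode of $\mathbb P\div(v_{k+1}\otimes v_{k+1})$: using $\sin^2=\tfrac12(1-\cos)$ and $\tilde\varphi_j^2(\M{k+1}\cdot)=A_{j,k+1}+\text{osc}$, the mean-mean part of $v_{k+1}\otimes v_{k+1}$ reduces (to leading order, modulo the mollifier) to $\tfrac12\sum_j A_{j,k+1}|\eta_j|^4 a_{j,k+1}^2\theta_j\otimes\theta_j e^{-2|\eta_j|^2 \N{k+1}^2 t}$, whose $\mathbb P\div$ matches $-\dd_t\overline v_k$ precisely.

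\textbf{Residuals to be controlled.} After these two exact cancellations, I would organize $F^{(i)}$ as a sum over $k$ of five types of contributions:
(i) \emph{Derivative-loss terms} where $\Delta$ falls on $a_{j,k}$, on $\chi_k$, or on $\tilde\varphi_j(\M{k}\cdot)$, rather than on the $\sin(\N{k}\cdot)$; these gain a factor of $\M{k}/\N{k}=\N{k}^{-(1-\gamma^{-1})}$ or better;
(ii) \emph{Mollifier commutators} arising from $\phi_k\ast$, which gain the mollification scale $\N{k+1}^{-1/3}\N{k}^{-2/3}$;
(iii) \emph{High-frequency remnants} of $v_{k+1}\otimes v_{k+1}$, namely the $\cos(2\N{k+1}(x-x_j)\cdot\eta_j)$ part and the zero-mean part of $\tilde\varphi_j^2(\M{k+1}\cdot)$, where $\mathbb P\div$ is inverted by $\mathcal R$ yielding a gain of $\N{k+1}^{-1}$ or $\M{k+1}^{-1}$;
(iv) \emph{Off-diagonal interactions} $v_k\otimes v_\ell$, $v_k\otimes\overline v_\ell$, $\overline v_k\otimes\overline v_\ell$ with $k\neq\ell$ (or same $k$ but different pipe index $j$), where the disjoint support of Mikado pipes kills the same-scale same-$j$ cross terms, and the amplitude or frequency gap handles the rest;
(v) \emph{Terms involving $\overline v_k$}, which carry an extra factor of $\N{k+1}^{-2}$ and hence are substantially subcritical.

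\textbf{Definition of $F^{(i)}$ and estimates.} For each term in (i)--(v), I would define the corresponding piece of $F^{(i)}$ explicitly, either as a pointwise product (for terms already in divergence form) or by applying $\mathcal R$, at the cost of one negative derivative gain. For the $L^\infty$ and $C^{1,\kappa}$ bounds, I would use the Bernstein/H\"older estimates of \S\ref{littlewoodpaleysection}, the amplitude bounds \eqref{psidatabounds} and \eqref{abounds}, and the time-concentration $e^{-\N{k}^2 t}$ (or $e^{-2\N{k+1}^2 t}$) to localize each summand to times $t\sim \N{k}^{-2}$. The key observation is that a bound of the shape $\N{k}\,\N{k}^{-c}\,e^{-\N{k}^2 t}$ in $L^\infty$, when multiplied by $t^{1-\alpha}$, becomes at most $\N{k}^{-1+2\alpha-c}$, which summed over $k$ is $O(\N{0}^{-c+2\alpha-1})$ and vanishes as $A\to\infty$ provided $c>2\alpha-1$. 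The condition $\alpha<1/8$ together with $\gamma>1/(1-4\alpha)$ and $\kappa<\tfrac12-\tfrac1{2\gamma}-2\alpha$ from \S\ref{parametersubsection} is chosen precisely to ensure that the worst exponent---coming from type (i) or (iii), with an extra derivative loss for $C^{1,\kappa}$---remains subcritical.

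\textbf{Main obstacle.} The most delicate estimate is the $C^{1,\kappa}$ bound, which loses a derivative compared to $L^\infty$ and therefore a factor of $\N{k}$. One has to exploit two things simultaneously: (a) that $\mathcal R$ gains a derivative in H\"older scales (so the high-frequency remnants in (iii) regain what one loses in $C^{1,\kappa}$), and (b) that the slow factors $a_{j,k}$, $\chi_k$ have controlled $C^{1,\kappa}$ norms via \eqref{abounds} and \eqref{cutoffbounds}. Balancing the gains against the derivative losses in the exponents $\alpha$ and $\kappa$, and checking that the parameter constraint $\kappa<\tfrac12-\tfrac1{2\gamma}-2\alpha$ provides enough room, is the crux of the proof. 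Once this is verified term-by-term, summing the geometric series in $\N{k}$ gives an overall bound $O(A^{-c})$ for some $c>0$, and taking $A$ large yields the claimed $\epsilon_0$.
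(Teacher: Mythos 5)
Your overall architecture matches the paper's: you identify the same two cancellations (heat operator killing the $\sin\cdot e^{-\N k^2 t}$ factor in $v_k$, and $\dd_t\overline v_k$ matching the zero-mode of $\mathbb P\div(v_{k+1}^p\otimes v_{k+1}^p)$), and your five families (i)--(v) line up with the paper's $F_1,\ldots,F_4$ once one groups your (i)--(ii) into $F_1$, (v) into $F_2$, the built-in cancellation into $F_3$ (which vanishes identically), and (iii)--(iv) into $F_4$. The strategy of maximizing each dyadic summand in $t$ and then summing in $k$ to extract a power $A^{-c}$ is also consistent with what the paper does.

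However, there is a genuine gap in item (iii), which is in fact the most delicate piece. You assert that inverting $\mathbb P\div$ by $\mathcal R$ on the oscillatory remnant of $v_{k+1}^p\otimes v_{k+1}^p$ ``yields a gain of $\N{k+1}^{-1}$ or $\M{k+1}^{-1}$,'' but this gain is not free: $\mathcal R$ by itself is order zero, so applied to a generic field it gains nothing. Two observations are needed. First, since the oscillatory part $\tilde\varphi_j^2(\M k x)\sin^2(\N k(x-x_j)\cdot\eta_j)-A_{j,k}$ is annihilated by $\div$ acting along $\theta_j$ (the steady Euler property \eqref{steadyeuler}), the divergence hits only the slowly-varying coefficient $a_{j,k}^2$; without this cancellation one would face an unrecoverable $\N k$ loss. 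Second, one must estimate $\mathcal R$ acting on the resulting field of the form $\grad(a_{j,k}^2)\cdot h_{j,k}(\M k x)$, where $h_{j,k}$ is a $2\pi$-periodic, zero-mean profile. Here the paper Fourier-expands $h_{j,k}$ and invokes the stationary phase lemma (Lemma~\ref{stationaryphase}), which gives a gain $\M k^{-1+\beta}$ in $C^\beta$ (not a clean $\M k^{-1}$), and one then needs $\beta$ to sit in the window $(0,\,1-\gamma(4\alpha+b^{-1}))$, which is exactly why the condition $\gamma<1/(4\alpha+b^{-1})$ from \S\ref{parametersubsection} is required. If you try to proceed without isolating where $\div$ lands and without the stationary phase estimate, the $C^{1,\kappa}$ bound on this piece fails.

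One smaller issue: in item (iv) you say the disjoint supports kill the ``same-scale same-$j$ cross terms.'' It is the same-scale, \emph{different}-$j$ products $\Psi_{j_1,k}\otimes\Psi_{j_2,k}$ ($j_1\neq j_2$) that vanish by the separation condition \eqref{pipesseparated}; the same-scale same-$j$ products are the diagonal terms that survive and produce $\mathcal N_{k,1}+\mathcal N_{k,2}$. This is probably a slip of the pen, but it matters for the bookkeeping.
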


\begin{proof}
Before beginning to define and estimate the residual, we extract the part of $v_k$ that will not be perturbative, decomposing $v_k=v_k^p+v_k^e$ where
\eqn{
v_k^p&=\N{k}^2\sum_j|\eta_j|^2a_{j,k}\Psi_{j,k},\\
v_k^e&=v_k^{e,1}+v_k^{e,2}+v_k^{e,3}+v_k^{e,4},
}
having defined
\eqn{
v_k^{e,1}&=(\phi_k-\delta)*\curl\curl\sum_ja_{j,k}\Psi_{j,k},\\
v_k^{e,2}&=-\N{k}^{-2}\sum_j\Delta(a_{j,k}\tilde\varphi_j(\M{k}x))\sin(\N{k}(x-x_j)\cdot\eta_j)\theta_j\exp(-|\eta_j|^2\N{k}^2t),\\
v_k^{e,3}&=-2\N{k}^{-1}\sum_j\eta_j\cdot\grad(a_{j,k}\tilde\varphi_j(\M{k}x))\cos(\N{k}(x-x_j)\cdot\eta_j)\theta_j\exp(-|\eta_j|^2\N{k}^2t)\\
v_k^{e,4}&=\N{k}^{-2}\grad\sum_j\theta_j\cdot\grad a_{j,k}\tilde\varphi_j(\M{k}x)\sin(\N{k}(x-x_j)\cdot\eta_j)\exp(-|\eta_j|^2\N{k}^2t).
}
Here, $v_k^{e,1}$ captures the effect of the mollifier, while the rest consists of the terms arising when $\curl\curl=-\Delta+\grad\div$ acts on $v_k$. Note that for $v_k^{e,4}$, we have used \eqref{steadyeuler}. The dominant term is $v_k^p$ which appears when the Laplacian falls on the sine in $\Psi_{j,k}$. Sharp estimates for $v_k^p$ are immediate from \eqref{mikadoprofilebound} and \eqref{abounds},
\eq{\label{vkpbounds}
\|\grad^mv_k^p\|_{L^\infty}&\lesssim_m \N{k}^{1+m}\exp(-\N{k}^2t).
}
The ``error part'' $v_k^{e}$ is slightly more involved to bound because we require smallness in a subcritical norm. By \eqref{abounds},
\eqn{
\|\grad^mv_k^{e,1}\|_{L^\infty}&\lesssim_m \N{k}^{\frac43+m}\N{k+1}^{-\frac13}\exp(-\N{k}^2t)\\
\|\grad^mv_k^{e,2}\|_{L^\infty}&\lesssim_m \N{k}^{-1+m}\M{k}^2\exp(-\N{k}^2t)\\
\|\grad^mv_k^{e,3}\|_{L^\infty}&\lesssim_m \N{k}^{m}\M{k}\exp(-\N{k}^2t)\\
\|\grad^mv_k^{e,4}\|_{L^\infty}&\lesssim_m \N{k-1}\N{k}^{m}\exp(-\N{k}^2t)
}
for all $m\geq0$. Each of the right-hand sides is of the form $A^{-c(b,\gamma)b^k}\M{k}\N{k}^m\exp(-\N{k}^2t)$ for some $c(b,\gamma)\geq0$, and in fact $c=0$ is achieved for $v_k^{e,3}$ (see \S\ref{parametersubsection} for the relevant parameter relations), leading to 
\eq{\label{vkebounds}
\|\grad^mv_k^e\|_{L^\infty}&\lesssim_m \M{k}\N{k}^m\exp(-\N{k}^2t).
}

To define $F^{(1)}$ and $F^{(2)}$, we need to further decompose the interaction
\eqn{
v_{k}^p\otimes v_{k}^p&=\N{k}^4\sum_{j_1,j_2}|\eta_{j_1}|^2|\eta_{j_2}|^2a_{j_1,k}a_{j_2,k}\Psi_{j_1,k}\otimes\Psi_{j_2,k}=\mathcal N_{k,1}+\mathcal N_{k,2}+\mathcal N_{k,3}
}
where
\eqn{
\mathcal N_{k,1}&=\sum_ja_{j,k}|\eta_j|^4e^{-2|\eta_j|^2\N{k}^2t}a_{j,k}^2\theta_j\otimes\theta_j,\\
\mathcal N_{k,2}&=\sum_j|\eta_j|^4e^{-2|\eta_j|^2\N{k}^2t}a_{j,k}^2\big(\tilde\varphi_j^2(\M{k}x)\sin^2(\N{k}(x-x_j)\cdot\eta_j)-A_{j,k}\big)\theta_j\otimes\theta_j,\\
\mathcal N_{k,3}&=\N{k}^4\sum_{j_1\neq j_2}|\eta_{j_1}|^2|\eta_{j_2}|^2a_{j_1,k}a_{j_2,k}\Psi_{j_1,k}\otimes\Psi_{j_2,k}
}
where $A_{j,k}$ was defined in \eqref{l2normalizedpipes}. $\mathcal N_{k,1}$ is the leading part that acts as a ``force'' on $\overline v_{k-1}$, while $\mathcal N_{k,2}$ will be perturbative. Meanwhile, $\mathcal N_{k,3}$ consists of cross terms which vanish identically due to the pipe separation assumption \eqref{pipesseparated}.

Let us also define the tensor fields
\eqn{
R_k&=\phi_k*\mathcal D\sum_ja_{j,k}\Psi_{j,k}\\
\overline R_k&=2^{-1}\N{k+1}^{-2}\sum_j A_{j,k+1}|\eta_j|^2e^{-2|\eta_j|^2 \N{k+1}^2 t}a_{j,k+1}^2(x)\theta_j\otimes\theta_j
}
which are constructed so that
\eq{\label{Rproperty}
\mathbb{P}\div R_k=v_k,\quad\mathbb{P}\div \overline R_k=\overline v_k.
}
At this point we specialize to the case $i=1$ in which, we recall from Definition~\ref{principalpartdefinition}, $v^{(1)}=v_0+\overline v_1+v_2+\overline v_3+\cdots$. To simplify this expansion, we define the consolidated parts of the velocity
\eqn{
v^p\coloneqq \sum_{k\geq0\text{ even}}v_k^p,\quad\quad v^e\coloneqq\sum_{k\geq0\text{ even}}v_k^e,\quad\quad \overline v\coloneqq\sum_{k\geq1\text{ odd}}\overline v_k
}
so that we may write $v^{(1)}=v^p+v^e+\overline v$. From \eqref{vkpbounds} we easily have
\eq{\label{vpbounds}
\|\grad^mv^p\|_{L^\infty}&\lesssim_m\sum_k\N{k}^{1+m}\exp(-\N{k}^2t)\lesssim_m t^{-\frac{1+m}{2}}.
}
Next, from \eqref{vkebounds},
\eqn{
\|\grad^mv^e\|_{L^\infty}&\lesssim_m \sum_{k\geq0}\M{k}\N{k}^m\exp(-\N{k}^2t)\\
&\lesssim (\sup_k\M{k}/\N{k})^\frac12\sum_{k\geq0}(\M{k}\N{k})^\frac12\N{k}^m\exp(-\N{k}^2t).
}
The factor in front is comparable to $\sup_kA^{-(\gamma-1)b^k/2}=A^{-(\gamma-1)/2}$. With a sufficiently large choice of $A$ (depending on $\gamma$), this is majorized by $\epsilon_0/C_3$ where $C_3$ can be chosen as needed to absorb incidental constants that appear. With $t>0$ fixed, we estimate the sum. Observe that the summands increase rapidly until $\N{k}\sim t^{-\frac12}$, then they decrease rapidly. As a result, the sum is controlled by the largest term. We can crudely provide an upper bound by letting $\N{k}$ run over $(0,\infty)$ and maximizing; clearly the maximum is attained at $\N{k}\sim t^{-\frac12}$. We arrive at
\eq{\label{vebound}
t^\frac12\|v^e\|_{L^\infty}+t\|\grad v^e\|_{C^\kappa}&\lesssim C_0^{-1}\epsilon_0t^{\frac14-\frac1{4\gamma}-\frac\kappa2}\leq C_0^{-1}\epsilon_0t^\alpha
}
by \eqref{parameterinequality}, since $\M{k}=\N{k}^{1/\gamma}$ and $t\leq1$. Similarly, using \eqref{vkbarbounds},
\eqn{
\|\grad^m\overline v\|_{L^\infty}&\lesssim_m \sum_{k\geq1}\N{k}^{1+\beta_1+m}e^{-\N{k+1}^2t/O(1)}\lesssim(\sup_k\N{k}/\N{k+1})^\frac14\sum_{k\geq1}\N{k}^{\frac34+\beta_1+m}\N{k+1}^\frac14e^{-\N{k+1}^2t/O(1)}
}
and therefore, by analogous reasoning and a sufficiently large choice of $A$ depending on $b$,
\eq{\label{vbarbound}
t^\frac12\|\overline v\|_{L^\infty}+t\|\grad\overline v\|_{C^\kappa}&\lesssim C_3^{-1}\epsilon_0(t^{\frac38-\frac{3/4+\beta_1}{2b}}+ t^{\frac{1}{2}-\frac{1}{2b}-\frac{\kappa}{2}}t^{\frac38-\frac{3/4+\beta_1}{2b}})\leq C_3^{-1}\epsilon_0t^\alpha,
}
where $\beta_1=\min\{\frac{b-1}4,\frac12\}$ as before, and estimating using \eqref{parameterinequality} that $\frac38-\frac{3/4+\beta_1}{2b}\geq \frac14(1-b^{-1})>\alpha$ and that $\kappa<1-b^{-1}$. A nearly identical argument will be used for every contribution to extract some smallness in exchange for a bit of criticality so we shall not repeat the details each time.

Finally we can construct $F^{(1)}$:
\eqn{
F^{(1)}\coloneqq F_1^{(1)}+F_2^{(1)}+F_3^{(1)}+F_4^{(1)}
}
where
\eqn{
F_1^{(1)}&=-\sum_{k\geq0\text{ even}}(\dd_t-\Delta)R_k\\
F_2^{(1)}&=\sum_{k\geq1\text{ odd}}\Delta\overline R_k\\
F_3^{(1)}&=-\mathcal R\div\mathcal N_{0,1}-\sum_{k\geq1
\text{ odd}}(\dd_t\overline R_k+\mathcal N_{k+1,1})\\
F_4^{(1)}&=-v^{(1)}\otimes v^{(1)}+\sum_{k\geq0\text{ even}}(v_k^p\otimes v_k^p-\mathcal R\div\mathcal N_{k,2})
}
where $\mathcal R$ is the anti-divergence operator defined in Lemma~\ref{Rdefinition}. Let us briefly verify that $F^{(1)}$ defined in this way satisfies \eqref{Frequirement}. We compute
\eq{
-\mathbb P\div F^{(1)}&=\sum_{k\geq0\text{ even}}(\dd_t-\Delta)\mathbb P\div R_k-\sum_{k\geq1\text{ odd}}\Delta\mathbb P\div \overline R_k+\mathbb P\div\mathcal N_{0,1}\nonumber\\
&\quad+\sum_{k\geq1\text{ odd}}(\dd_t\mathbb P\div \overline R_k+\mathbb P\div\mathcal N_{k+1,1})+\mathbb P\div v^{(1)}\otimes v^{(1)}\nonumber\\
&\quad-\sum_{k\geq0\text{ even}}\mathbb P\div v_k^p\otimes v_k^p+\sum_{k\geq0\text{ even}}\mathbb P\div\mathcal N_{k,2}\nonumber\\
&=\sum_{k\geq0\text{ even}}(\dd_t-\Delta)v_k+\sum_{k\geq1\text{ odd}}(\dd_t-\Delta)\overline v_k+\mathbb P\div v^{(1)}\otimes v^{(1)}\label{firstline}\\
&\quad+\mathbb P\div\Big(\mathcal N_{0,1}+\sum_{k\geq1\text{ odd}}\mathcal N_{k+1,1}-\sum_{k\geq0\text{ even}}( v_k^p\otimes v_k^p-\mathcal N_{k,2})\Big)\label{secondline}
}
using \eqref{Rproperty} and the fact that $\div\mathcal R$ acts as the identity on zero-average fields (which, indeed, is the case throughout by the divergence theorem). Clearly the expression on line \eqref{firstline} is precisely $\dd_tv^{(1)}-\Delta v^{(1)}+\mathbb P\div v^{(1)}\otimes v^{(1)}$. Meanwhile, the expression on the next line \eqref{secondline} identically vanishes because $v_k^p\otimes v_k^p=\mathcal N_{k,1}+\mathcal N_{k,2}$. Thus we conclude that \eqref{Frequirement} indeed holds for $F^{(1)}$.

Now we proceed to prove the claimed estimate on $F^{(1)}$. Consider the contribution to $F_1^{(1)}$ from a fixed $j\in\{1,2,\ldots,6\}$ and $k$ even. The key observation is that the heat operator exactly annihilates the $\sin(\N{k}(x-x_j)\cdot\eta_j)\exp(-|\eta_j|^2\N{k}^2t)$ factor within $\Psi_{j,k}$; thus one is left with the remaining terms from the Laplacian,
\eqn{
\N{k}^{-2}|\phi_k*\mathcal D\big(\Delta(\tilde\varphi_ja_{j,k})\sin\Theta+2\eta_j\cdot\grad(\tilde\varphi_ja_{j,k})\N{k}\cos\Theta\big)|e^{-|\eta_j|^2\N{k}^2t}
\lesssim \M{k}\N{k}e^{-\N{k}^2t}
}
by \eqref{abounds}, having written $\Theta$ in place of $\N{k}(x-x_j)\cdot\eta_j$ for brevity.
Once again by \eqref{abounds}, additional derivatives cost a factor of $\N{k}$ each and we obtain
\eqn{
\|\grad^mF_1^{(1)}\|_{L^\infty}&\lesssim \sum_{k\geq0\text{ even}}\M{k}\N{k}^{1+m}\exp(-\N{k}^2t)
}
for $m\in\{0,1\}$. This implies
\eq{\label{F1estimate}
t\|F_1^{(1)}\|_{L^\infty}+t^{\frac32}\|\grad F_1^{(1)}\|_{C^\kappa}\lesssim C_3^{-1}\epsilon_0t^{\frac14-\frac1{4\gamma}}\leq C_3^{-1}\epsilon_0t^\alpha
}
by \eqref{parameterinequality} and identical reasoning as above to extract the $\epsilon_0$ factor.

For $F_2^{(1)}$, we use the crude upper bound coming from \eqref{abounds},
\eqn{
\frac12\N{k+1}^{-2}|\Delta \sum_j A_{j,k+1}|\eta_j|^2e^{-2|\eta_j|^2 \N{k+1}^2 t}a_{j,k+1}^2(x)\theta_j\otimes\theta_j|&\lesssim \N{k}^2e^{-\N{k+1}^2t}.
}
Once again, additional derivatives cost $\N{k}$ so
\eqn{
\|\grad^mF_2\|_{L^\infty}&\lesssim \sum_{k\geq1\text{ odd}}\N{k}^{2+m}\exp(-\N{k+1}^2t).
}
By the same reasoning as before, we may extract some smallness from the fast time decay and obtain
\eqn{
t\|F_2^{(1)}\|_{L^\infty}+t^{\frac32}\|\grad F_2^{(1)}\|_{C^\kappa}\lesssim C_3^{-1}\epsilon_0t^{\frac12-\frac1{2b}}\leq C_3^{-1}\epsilon_0t^\alpha
}
by \eqref{parameterinequality} and a sufficiently large choice of $A$.

Next, we claim that $F_3^{(1)}$ vanishes identically. Indeed, $\mathcal N_{0,1}$ is a constant because $a_{j,0}$ are constants, so it is annihilated by $\div$. Then, it is a trivial calculation from the definitions that $\dd_t\overline R_k+\mathcal N_{k+1,1}$ identically vanishes for all $k\geq1$.

Moving to $F_4^{(1)}$, recall the decomposition $v^{(1)}=v^p+v^e+\overline v$ from which we can expand
\eqn{
F_4^{(1)}&=-\Big(\sum_{k\neq m\geq0\text{ even}}v_k^p\otimes v_m^p\Big)-v^{(1)}\otimes(\overline v+v^e)-(\overline v+v^e)\otimes v^p-\sum_{k\geq0\text{ even}}\mathcal R\div\mathcal N_{k,2}\\
&\eqqcolon F_{4,1}+F_{4,2}+F_{4,3}+F_{4,4}.
}
We immediately have from \eqref{vweightedlinftybound}, \eqref{vpbounds}, \eqref{vebound}, \eqref{vbarbound}, and the product rule for $C^{1,\kappa}\cap L^\infty$ that $F_{4,2}$ and $F_{4,3}$ obey the desired estimates.

Next consider $F_{4,1}$. By \eqref{vkpbounds},
\eqn{
\|F_{4,1}(t)\|_{L^\infty}&\leq\sum_{k\neq m}\|v_k^p\otimes v_m^p\|_{L^\infty}\lesssim \sum_{k\neq m}\N{k}\N{m}\exp(-(\N{k}^2+\N{m}^2)t)\\
&\lesssim\sum_{m}\left(\N{m}\exp(-\N{m}^2t)\sum_{k<m}\N{k}\right)\\
&\lesssim \sum_m\N{m-1}\N{m}\exp(-\N{m}^2t)\\
&\lesssim C_3^{-1}\epsilon_0t^{-\frac34-\frac1{4b}}
}
where, to obtain the second line from the first, we invoke the symmetry of $k$ and $m$ and sum over only the indices $k<m$ without losing of generality.

Similarly, for the derivative,
\eqn{
\|\grad F_{4,1}(t)\|_{C^\kappa}&\lesssim \sum_{k\neq m}(\N{k}+\N{m})^{1+\kappa}\N{k}\N{m}\exp(-\N{k\vee m}^2t)\\
&\lesssim\sum_{m}\left(\N{m}^{2+\kappa}\exp(-\N{m}^2t)\sum_{k<m}\N{k}\right)\\
&\lesssim \sum_m\N{m-1}\N{m}^{2+\kappa}\exp(-\N{m}^2t/O(1))\\
&\lesssim C_3^{-1}\epsilon_0 t^{-\frac{5}{4}-\frac{1}{4b}-\frac{\kappa}{2}}.
}
With \eqref{parameterinequality} and the fact that $\kappa<\frac{1}{2}-\frac{1}{2b}-2\alpha$, we conclude that
\eqn{
t\|F_{4,1}(t)\|_{L^\infty}+t^\frac32\|F_{4,1}(t)\|_{L^\infty}\lesssim C_3^{-1}\epsilon_0t^{\alpha}.
}
Finally we estimate $F_{4,4}$:
\eqn{
-\mathcal R\div\sum_{k\geq0\text{ even}}\sum_j|\eta_j|^4e^{-2|\eta_j|^2\N{k}^2t}a_{j,k}^2\big(\tilde\varphi_j^2(\M{k}x)\sin^2(\N{k}(x-x_j)\cdot\eta_j)-A_{j,k}\big)\theta_j\otimes\theta_j.
}
Let us fix a particular $j,k$. The first observation is that when $\div$ falls on the expression in parentheses, the result vanishes due to \eqref{steadyeuler}. What remains is
\eqn{
F_{4,4}=-\sum_{k\geq0\text{ even}}\sum_j|\eta_j|^4e^{-2|\eta_j|^2\N{k}^2t}F_{4,4,j,k}:\theta_j\otimes\theta_j
}
where $F_{4,4,j,k}$ can be written in coordinates as
\eqn{
F_{4,4,j,k}^{j_1j_2j_3j_4}\coloneqq\mathcal R^{j_1j_2j_3}\Big(\dd_{j_4}(a_{j,k}^2)\big(\tilde\varphi_j^2(\M{k}x)\sin^2(\N{k}(x-x_j)\cdot\eta_j)-A_{j,k}\big)\Big).
}
We will omit the coordinates henceforth because they play no role in the estimates. We observe that $F_{4,4,j,0}$ vanishes identically because $a_{j,0}$ is a constant.

Recall by definition that $\N{k}$ is an integer multiple of $\M{k}$; thus we may write
\eqn{
h_{j,k}(\M{k}x)=\tilde\varphi_j^2(\M{k}x)\sin^2(\N{k}(x-x_j)\cdot\eta_j)-A_{j,k}
}
for a $2\pi$-periodic function $h_{j,k}\in C^\infty(\Td;\mathbb R)$. Most importantly, due to \eqref{l2normalizedpipes}, $h_{j,k}$ has zero average on $\Td$. Thus, taking a Fourier series of $h_{j,k}$, we obtain
\eq{\label{F44ikseries}
F_{4,4,j,k}(x)&=\sum_{\xi\in\Zd\setminus0}\hat h_{j,k}(\xi)\mathcal R\Big(\grad(a_{j,k}^2)(x)e^{i\M{k}\xi\cdot x}\Big).
}
We need to estimate the Fourier coefficients of $h_{j,k}=\tilde\varphi_j^2(x)\sin^2(\N{k}\M{k}^{-1}(x-\M{k}x_j)\cdot\eta_j)-A_{j,k}$. Obviously the $\sin^2$ factor has Fourier support at just $\xi=0$ and $\xi=\pm 2\N{k}\M{k}^{-1}\eta_j$. Thus
\eqn{
\hat h_{j,k}(\xi)&=\Big(s_0\mathcal F(\tilde{\varphi}_j^2)(\xi)+s_1\mathcal F(\tilde{\varphi}_j^2)(\xi+2\N{k}\M{k}^{-1}\eta_j)+s_1\mathcal F(\tilde{\varphi}_j^2)(\xi-2\N{k}\M{k}^{-1}\eta_j)\Big)\mathbf1_{\xi\neq0}
}
where $s_0$ and $s_1$ are the Fourier coefficients of $\sin^2(\N{k}\M{k}^{-1}(x-M_kx_j)\cdot\eta_j)$ at $\xi=0$ and $\xi=\pm \N{k}\M{k}^{-1}\eta_j$, respectively. One readily computes $|s_0|=4\pi^3$ and $|s_1|=2\pi^3$. This implies
\eqn{
|\hat h_{j,k}(\xi)|&\lesssim\langle\xi\rangle^{-10}+\langle\xi+2\N{k}\M{k}^{-1}\eta_j\rangle^{-10}+\langle\xi-2\N{k}\M{k}^{-1}\eta_j\rangle^{-10}.
}
Finally, we can estimate \eqref{F44ikseries} with an application of Lemma~\ref{stationaryphase} with H\"older parameter $\beta$ freely chosen in the range
\eqn{
0<\beta<1-\gamma(4\alpha+b^{-1})
}
where the coefficient function ``$a$'' in the lemma becomes $\grad(a_{j,k}^2)$. Note that this range is non-empty by definition of $\gamma$. Using \eqref{abounds}, we have, for all $\xi\in\Zd\setminus0$,
\eqn{
\|\mathcal R\big(\grad(a_{j,k}^2)(x)e^{i\M{k}\xi\cdot x}\big)\|_{C^\beta}&\lesssim_m \N{k-1}\N{k}^2(\M{k}^{-1+\beta}+\M{k}^{-m+\beta}\N{k-1}^m+\M{k}^{-m}\N{k-1}^{m+\beta}).
}
The third term is clearly dominated by the second and can be dropped. By choosing $m>b/(1-\gamma/b)$, we can make $(\N{k-1}/\M{k})^m\ll \M{k}^{-1}$, allowing the second term to be dropped as well. This leaves us with 
\eqn{
\|F_{4,4,j,k}\|_{C^\beta}&\lesssim \N{k-1}\M{k}^{-1+\beta}\N{k}^2\sum_{\xi\in\Zd}\Big(\langle\xi\rangle^{-10}+\langle\xi+2\N{k}\M{k}^{-1}\eta_j\rangle^{-10}+\langle\xi-2\N{k}\M{k}^{-1}\eta_j\rangle^{-10}\Big).
}
Clearly this is summable with a uniform constant; thus
\eqn{
\|F_{4,4}(t)\|_{L^\infty}&\lesssim\sum_{k\geq2\text{ even}}\N{k-1}\M{k}^{-1+\beta}\N{k}^2e^{-\N{k}^2t}\\
&\lesssim \sup_{k}(\N{k-1}\M{k}^{-1+\beta})^\frac12\sum_k\N{k-1}^\frac12\M{k}^{-(1-\beta)/2}\N{k}^{2}e^{-\N{k}^2t}\\
&\lesssim(\sup_k A^{(\frac\gamma{ b}-1+\beta)b^k})^\frac12t^{-\frac1{4b}+\frac{1-\beta}{4\gamma}-1}.
}
By definition of $\beta$, the power $\frac\gamma b-1+\beta<-4\alpha\gamma$ is negative so the prefactor can be made as small as needed with the choice of $A$, as usual. 

Finally, we estimate the gradient. Recall that $\grad\mathcal R$ is a Calder\'on--Zygmund operator so it is bounded on $C^\kappa(\Td)$. Thus, by \eqref{abounds} and interpolation,
\eqn{
\|\grad F_{4,4,j,k}\|_{C^\kappa}&\lesssim \|\grad(a_{j,k}^2)\big(\tilde\varphi_j^2(\M{k}x)\sin^2(\N{k}(x-x_j)\cdot\eta_j)-A_{j,k}\big)\|_{C^\kappa}\lesssim \N{k-1}\N{k}^{2+\kappa} .
}
Let $c_1\in(0,1-\frac{b(\kappa+2\alpha)}{b-1})$. Then
\eqn{
\|\grad F_{4,4}(t)\|_{C^\kappa}&\lesssim \sup_k(\N{k-1}/\N{k})^{c_1}\sum_{k\geq2\text{ even}}e^{-2\N{k}^2t/O(1)}\N{k-1}^{1-c_1}\N{k}^{2+\kappa+c_1}\lesssim t^{-\frac{1-c_1}{2b}-\frac{2+\kappa+c_1}2}.
}
We conclude
\eqn{
t\|F_{4,4}(t)\|_{L^\infty}+t^{\frac32}\|\grad F_{4,4}(t)\|_{L^\infty}&\lesssim C_3^{-1}\epsilon_0(t^{\frac{1-\beta}{4\gamma}-\frac1{4b}}+t^{\frac{1-\kappa-c_1}2-\frac{1-c_1}{2b}}).
}
Both powers are bounded from below by $\alpha$: the first because of the definitions of $\beta, b,$ and $\gamma$, and the second because of the definitions of $c_1, \kappa,$ and $b$.

With $F^{(1)}$ fully estimated, let us now address $F^{(2)}$ which corresponds to $v^{(2)}=\overline v_0+v_1+\overline v_2+v_3+\cdots$. We have an exactly analogous decomposition $v^{(2)}=v^p+v^e+\overline v$ where $v^p$, $v^e$, and $\overline v$ are as before, the only change being that ``even'' and ``odd'' are reversed in the definitions. The estimates \eqref{vpbounds}--\eqref{vbarbound} still hold by the same arguments. Now we can define $F^{(2)}$:
\eqn{
F^{(2)}\coloneqq F_1^{(2)}+F_2^{(2)}+F_3^{(2)}+F_4^{(2)}
}
where
\eqn{
F_1^{(2)}&=-\sum_{k\geq1\text{ odd}}(\dd_t-\Delta)R_k\\
F_2^{(2)}&=\sum_{k\geq0\text{ even}}\Delta\overline R_k\\
F_3^{(2)}&=-\sum_{k\geq0
\text{ even}}(\dd_t\overline R_k+\mathcal N_{k+1,1})\\
F_4^{(2)}&=-v^{(2)}\otimes v^{(2)}+\sum_{k\geq1\text{ odd}}(v_k^p\otimes v_k^p-\mathcal R\div\mathcal N_{k,2}).
}
From here, identical calculations to the ones for $F^{(1)}$ can be used to verify \eqref{Frequirement} and the claimed estimates for $F^{(2)}$.
\end{proof}

\subsection{Semigroup of the Linearization around the Principal Part}\label{semigroupsubsection}

Recall the subcriticality parameter $\alpha\in(0,\frac18)$ and H\"older exponent $\kappa\in(0,\frac12 - \frac1{{2\gamma}}-2\alpha)$. We consider the Banach space
\eqn{
Y=\{a\in C^0((0,1];C^{1,\kappa}(\Td;S^{3\times3})):\|a\|_{Y}\coloneqq \sup_{t\in(0,1]}(t^{1-\alpha}\|a\|_{L^\infty(\Td)}+t^{\frac32-\alpha}\|\grad a\|_{C^\kappa(\Td)})<\infty\}
}
in which we shall measure, for instance, the $F^{(i)}$ and $w^{(i)}\otimes w^{(i)}$. This choice is natural in light of Proposition~\ref{Festimateproposition}.

For $i\in\{1,2\}$, $a\in Y$, and $0<t'\leq t\leq1$, we define the semigroup $S^{(i)}(t,t')a$ as the solution of
\eqn{
\dd_tS^{(i)}(t,t')a-\Delta S^{(i)}(t,t')+2\mathbb P\div (v^{(i)}(t)\odot S^{(i)}(t,t')a)=0\\
S^{(i)}(t',t')=\mathbb P\div a(t').
}
There is no difficulty with $S^{(i)}(t,t')$ being well-defined and smooth because $v^{(i)}$ is smooth with uniform estimates on $[t',1]\times\Td$. For the reader's convenience, we give a proof of existence in Appendix~\ref{linearappendix}, or see~\cite[Theorem 21.3]{lemarie2002recent}. What is not so clear is whether one can obtain bounds that degenerate only mildly near $t'=0$, given that the smoothness of $v^{(i)}$ breaks down.

The effect of the following proposition is that $S^{(i)}(t,t')$ behaves comparably to $e^{(t-t')\Delta}\mathbb P\div$, with the slight loss $(t/t')^\epsilon$ ($\epsilon>0$ as small as desired) because $v^{(i)}$ does not belong to $L_t^2(\mathbb R_+;L_x^\infty)$ or $L^1(\mathbb R_+,t^{-\frac12}dt;L^\infty)$. If $v^{(i)}$ were an arbitrary vector field in, say, the Koch--Tataru space $X_{KT}$ and additionally obeyed \eqref{vweightedlinftybound}, we do not expect the loss would be so mild. It is essential that the loss is, in particular, smaller than the subcriticality window $(t')^\alpha$. Fortunately, due to $v^{(i)}$ being supported on a lacunary sequence of scales, quantities such as $\|v^{(i)}\|_{L_t^2([t',t];L_x^\infty)}$ can be arranged to deteriorate as slowly as desired as $t'/t\to0$; see the proof of Propostion~\ref{vestimatesproposition}.

\begin{proposition}\label{semigroupestimateproposition}
For all $i\in\{1,2\}$, $a\in Y$, and $0<t'\leq t\leq1$, we have
\eqn{
\|S^{(i)}(t,t')a\|_{L^\infty(\Td)}+(t-t')^\frac12\|\grad S^{(i)}(t,t')a\|_{C^\kappa(\Td)}\lesssim (t')^{-1+\alpha-\epsilon}t^{-\frac12+\epsilon}\|a\|_Y.
}
\end{proposition}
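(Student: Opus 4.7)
The plan is to analyze $S^{(i)}(t,t')a$ through its Duhamel representation
\eqn{
S^{(i)}(t,t')a=e^{(t-t')\Delta}\mathbb{P}\div a(t')-2\int_{t'}^te^{(t-s)\Delta}\mathbb{P}\div\bigl(v^{(i)}(s)\odot S^{(i)}(s,t')a\bigr)\,ds,
}
and to solve this linear integral equation by a Neumann-series/fixed-point argument in a Banach space tuned to the target estimate. First I would bound the initial term $S_0(t)\coloneqq e^{(t-t')\Delta}\mathbb{P}\div a(t')$. The definition of $\|\cdot\|_Y$ provides $\|a(t')\|_{L^\infty}\lesssim (t')^{-1+\alpha}\|a\|_Y$ and $\|\grad a(t')\|_{\mathcal{C}^\kappa}\lesssim (t')^{-3/2+\alpha}\|a\|_Y$; interpolating on the H\"older--Zygmund scale and inserting into \eqref{besovheatestimate} (applied to the symbol $\xi\mapsto\xi\mathbb{P}(\xi)$, which is first order), balanced between the regimes $t-t'\sim t'$ and $t-t'\sim t$, yields
\eqn{
\|S_0(t)\|_{L^\infty}+(t-t')^{\frac12}\|\grad S_0(t)\|_{\mathcal{C}^\kappa}\lesssim (t')^{-1+\alpha-\epsilon}t^{-\frac12+\epsilon}\|a\|_Y.
}

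Next I would introduce the Banach space
\eqn{
\|S\|_{Z_{t'}}\coloneqq\sup_{t\in(t',1]}(t')^{1-\alpha+\epsilon}\,t^{\frac12-\epsilon}\bigl(\|S(t)\|_{L^\infty}+(t-t')^{\frac12}\|\grad S(t)\|_{\mathcal{C}^\kappa}\bigr),
}
in which the target estimate is precisely $\|S^{(i)}\|_{Z_{t'}}\lesssim\|a\|_Y$. The previous step shows $\|S_0\|_{Z_{t'}}\lesssim\|a\|_Y$, so it remains to bound the linear Duhamel operator $\mathcal{T}[S](t)\coloneqq -2\int_{t'}^te^{(t-s)\Delta}\mathbb{P}\div(v^{(i)}(s)\odot S(s))\,ds$ on $Z_{t'}$. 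Using the algebra inequality $\|v^{(i)}\odot S\|_{\mathcal{C}^\kappa}\lesssim\|v^{(i)}\|_{L^\infty}\|S\|_{\mathcal{C}^\kappa}+\|v^{(i)}\|_{\mathcal{C}^\kappa}\|S\|_{L^\infty}$ from \S\ref{functionspacessection}, together with \eqref{besovheatestimate} to absorb $e^{(t-s)\Delta}\mathbb{P}\div$ into factors $(t-s)^{-1/2}$ or $(t-s)^{-1/2-\kappa/2}$ depending on whether the output is measured in $\mathcal{C}^\kappa$ or $L^\infty$, the integrand is controlled pointwise in $s$ by $\|v^{(i)}(s)\|_{L^\infty}\|S\|_{Z_{t'}}$ multiplied by an explicit kernel in $(t,t',s)$.

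Third, I would run a dyadic concatenation. Partition $(t',1]$ into the slabs $I_j=[2^jt',2^{j+1}t']\cap(t',1]$ for $j=0,1,\ldots,O(\log(1/t'))$. On each slab, Cauchy--Schwarz applied to the Duhamel integral reduces matters to controlling $\|v^{(i)}\|_{L^2(I_j;L^\infty)}^2$ and $\|v^{(i)}\|_{L^1(I_j,s^{-1/2}ds;L^\infty)}$, which by Proposition~\ref{vestimatesproposition} are both $O(1+(\log A)^{-1})$. Hence for $A$ sufficiently large in terms of $\epsilon$, the operator norm of $\mathcal{T}$ restricted to functions supported on $I_j$ is strictly less than $\tfrac12$, so the Neumann series converges on each slab. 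Concatenating across the $O(\log(1/t'))$ slabs that tile $(t',1]$ produces a cumulative multiplicative loss of the form $e^{O((\log A)^{-1}\log(1/t'))}\lesssim(1/t')^\epsilon$ once $A$ is chosen with $\epsilon\gtrsim(\log A)^{-1}$, yielding the stated estimate.

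The main obstacle is the simultaneous failure of two natural endpoint estimates: $\mathbb{P}$ is unbounded on $L^\infty$, precluding the naive bound $\|e^{(t-s)\Delta}\mathbb{P}\div f\|_{L^\infty}\lesssim(t-s)^{-1/2}\|f\|_{L^\infty}$; and $v^{(i)}$ just barely fails to lie in $L_t^2L_x^\infty$ as $t\downarrow0$. The first is circumvented by measuring everything in $\mathcal{C}^\kappa$, which is invariant under $\mathbb{P}$ and a multiplicative algebra with $L^\infty$, at the price of an additional factor $(t-s)^{-\kappa/2}$ in the heat kernel that is only barely time-integrable. The second is managed by the logarithmic-in-$\log A$ improvement in Proposition~\ref{vestimatesproposition}, which allows the integral $\int_{t'}^t$ to be absorbed into the $(t/t')^\epsilon$ loss provided $A$ is chosen large relative to $\epsilon^{-1}$. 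Balancing these two constraints against the subcriticality and H\"older ranges fixed in \S\ref{parametersubsection} is what forces the estimate to degrade by the factor $(t/t')^\epsilon$ rather than hold at the scaling-critical endpoint.
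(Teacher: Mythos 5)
Your first two steps (the Duhamel formula and the estimate on $S_0=e^{(t-t')\Delta}\mathbb P\div a(t')$) match the paper's opening moves. The difficulty is entirely in your third step, which replaces the paper's fractional Gr\"onwall argument (Lemma~\ref{gronwalllemma}) with a slab-by-slab Neumann series; as written this step fails for two intertwined reasons.

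\textbf{The operator norm on a dyadic slab is not small.} You claim that the norms $\|v^{(i)}\|_{L^2(I_j;L^\infty)}^2$ and $\|v^{(i)}\|_{L^1(I_j,s^{-1/2}ds;L^\infty)}$ are $O(1+(\log A)^{-1})$ and hence, for $A$ large, force the operator norm of $\mathcal T$ on a dyadic slab below $\frac12$. But Proposition~\ref{vestimatesproposition} gives $1+(\log A)^{-1}\log(t_2/t_1)$, and the ``$1$'' is an absolute constant coming from the single dyadic frequency $\N{k}\sim t_2^{-1/2}$ nearest the slab; it does not shrink as $A\to\infty$. On a ratio-$2$ slab the bound is therefore $O(1)$, not $o(1)$, and the Neumann series need not converge. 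Moreover, even if each slab yielded a bounded Neumann series with multiplicative loss $C=e^{O(1)}>1$, concatenating over the $J\sim\log_2(1/t')$ dyadic slabs accumulates a factor $C^J\sim(1/t')^{O(1)}$, which is fatal; the target is only $(1/t')^\epsilon$. The paper's Gr\"onwall lemma applied \emph{once on the whole interval} $[t',t]$ is precisely what prevents the $O(1)$ term from being paid on every slab: the exponent only sees $O(1+(\log A)^{-1}\log(t/t'))$ in total, giving the single factor $e^{O(1)}(t/t')^{O(1/\log A)}$. To salvage a slab argument you would have to work with slabs of ratio $\sim A^c$ (so the number of slabs is $\sim\log(1/t')/(c\log A)$) and run a Gr\"onwall argument, not a contraction, on each slab; as written the proposal does not do this.

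\textbf{Cauchy--Schwarz does not close at the endpoint.} The Duhamel integrand involves $(t-s)^{-1/2}$, and $\|(t-s)^{-1/2}\|_{L^2(ds)}$ diverges logarithmically as $s\uparrow t$, so Cauchy--Schwarz against $\|v\|_{L^2}$ does not give the bound you state. This is exactly why Lemma~\ref{gronwalllemma} uses H\"older with an exponent $p>2$ (so that $(t-s)^{-p'/2}$ is integrable, at the cost of the extra factor $\|s^{1/2}g_2\|_{L^\infty}^{p-2}$), and why the paper's proof of the proposition further splits the Duhamel integral at $t/2$ (respectively $(t+t')/2$) so that on each piece one of the two weights $(t-s)^{-1/2}$, $s^{-1/2}$ is harmless. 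Your reduction to the norms $\|v\|_{L^2(L^\infty)}^2$ and $\|v\|_{L^1(s^{-1/2}ds;L^\infty)}$ is the right target, but Cauchy--Schwarz alone does not reach it; you need the $p>2$ H\"older plus the split at $t/2$, and then the right tool to close is a Gr\"onwall-type inequality rather than a smallness-of-operator-norm fixed point.
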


\begin{proof}
We suppress the dependence on $i$ and write $v$ for either $v^{(1)}$ or $v^{(2)}$. For a fixed $t'\in(0,1)$, the semigroup obeys the Duhamel formula
\eqn{
S(t,t')a&=e^{(t-t')\Delta}\mathbb P\div a(t')-2\int_{t'}^te^{(t-s)\Delta}\mathbb P\div(v(s)\odot S(s,t')a)ds\\
&\eqqcolon I+II+III
}
where $I$ is the linear propagator and $II$ and $III$ are the pieces of the integral over $[t',t'\vee(t/2)]$ and $[t'\vee(t/2),t]$ respectively.

Consider $I$. There are two ways to estimate: either by moving to $C^\kappa$ on which $e^{(t-t')\Delta}\mathbb P$ is uniformly bounded, or applying the $C^1\to L^\infty$ boundedness of the heat kernel from Lemma~\ref{heatlplemma}. We use whatever upper bound is more favorable and obtain
\eqn{
\|I\|_{L^\infty}&\lesssim \|\grad a(t')\|_{C^\kappa}\wedge((t-t')^{-\frac12}\|a(t')\|_{L^\infty})\\
&\lesssim (t')^{-\frac32+\alpha}\wedge((t-t')^{-\frac12}(t')^{-1+\alpha})\|a\|_{Y}\\
&=(t')^{-1+\alpha}(t'\vee (t-t'))^{-\frac12}\|a\|_Y\\
&\lesssim(t')^{-1+\alpha}t^{-\frac12}\|a\|_Y.
}
Next we have by \eqref{vweightedlinftybound}
\eqn{
\|II\|_{L^\infty}&\lesssim\int_{t'}^{t'\vee(t/2)}(t-s)^{-\frac12}\|v(s)\|_{L^\infty}\|S(s,t')a\|_{L^\infty}ds\\
& \lesssim t^{-\frac12}\int_{t'}^{t'\vee(t/2)} s^{-\frac12} \|v(s)\|_{L^\infty}\|S(s,t')a\|_{L^\infty} s^{\frac12}ds
}
where we have used the fact that the integral vanishes identically unless $t>2t'$, so $t/(t-s)\leq t/(t-t/2)=2$. Also by \eqref{vweightedlinftybound},
\eqn{
\|III\|_{L^\infty}&\lesssim \int_{t'\vee(t/2)}^t (t-s)^{-\frac12} \|v(s)\|_{L^\infty}\|S(s,t')a\|_{L^\infty} ds\\
&\lesssim t^{-\frac12} \int_{t'\vee(t/2)}^t (t-s)^{-\frac{1}{2}}\|v(s)\|_{L^\infty}\|S(s,t')a\|_{L^\infty} s^{\frac12}ds,
}
since $(t/s)^{1/2} \leq\sqrt2$ on the interval $s\in [t'\vee(t/2),t]$.
Letting $h(t)=t^\frac12\|S(t,t')a\|_{L^\infty}$ and summing the estimates for $I$, $II$, and $III$, we have arrived at
\eqn{
h(t)&\lesssim (t')^{-1+\alpha}\|a\|_Y+\int_{t'}^{t}(s^{-\frac12}+(t-s)^{-\frac12})\|v(s)\|_{L^\infty}h(s)ds.
}
This is in a form where we can apply Lemma~\ref{gronwalllemma} with $p=3$ (say), $g_1(s)=s^{-\frac12}\|v(s)\|_{L^\infty}$ and $g_2(s)=\|v(s)\|_{L^\infty}$ which lead to
\eqn{
h(t)&\lesssim (t')^{-1+\alpha}\|a\|_Y\exp(O(\|v\|_{L^1([t',t],t^{-\frac12}dt;L^\infty)}+\|s^\frac12 v\|_{L^\infty_{t,x}([t',t])}\|v\|_{L^2([t',t];L^\infty)}^2)).
}
By \eqref{vweightedlinftybound}--\eqref{vl2intimelinftyinspacebound}, we may estimate the norms in the exponent to obtain
\eqn{
h(t)&\lesssim(t')^{-1+\alpha}\|a\|_Y\exp(O(1+(\log A)^{-1}\log(t/t'))).
}
The exponential factor becomes $O((t/t')^{O(1/\log A)})$. Choosing $A$ sufficiently large depending on $\epsilon$, the power becomes smaller than $\epsilon/2$. Unpacking the definition of $h$ thus yields
\eq{\label{intermediateSlinftybound}
\|S(t,t')a\|_{L^\infty}&\lesssim t^{-\frac12}(t')^{-1+\alpha}(t/t')^{\frac\epsilon2}\|a\|_Y.
}

Next we address the $C^{1,\kappa}$ norm. To do so, we redefine the decomposition of $S(t,t')a$: $I$ stays the same but $II$ and $III$ split the integral into $[t',(t+t')/2]$ and $[(t+t')/2,t]$ respectively. Then, by a similar calculation using \eqref{besovheatestimate},
\eqn{
\|\grad I\|_{C^\kappa}&\lesssim ((t-t')^{-\frac12}\|\grad a(t')\|_{C^\kappa})\wedge((t-t')^{-1-\frac\kappa2}\|a(t')\|_{L^\infty})\\
&\lesssim ((t-t')^{-\frac12}(t')^{-\frac32+\alpha})\wedge((t-t')^{-1-\frac\kappa2}(t')^{-1+\alpha})\|a\|_Y\\
&\lesssim(t-t')^{-\frac12}(t')^{-1+\alpha}t^{-\frac{1+\kappa}2}\|a\|_Y,
}
having used the fact that $t'\vee(t-t')^{1+\kappa}\geq(t/2)^{1+\kappa}$ which can be seen by considering the cases $t'>t/2$ and $t'\leq t/2$ and using $t\leq1$. Next, by \eqref{besovheatestimate}, \eqref{vweightedlinftybound}, and \eqref{intermediateSlinftybound}, we have
\eqn{
\|\grad II\|_{C^\kappa}&\lesssim \int_{t'}^{(t+t')/2}(t-s)^{-1-\frac\kappa2}\|v(s)\|_{L^\infty}\|S(s,t')a\|_{L^\infty}ds\\
&\lesssim (t')^{-1+\alpha-\frac\epsilon2}\int_{t'}^{(t+t')/2}(t-s)^{-1-\frac\kappa2}s^{-1+\frac\epsilon2}ds\|a\|_Y\\
&\lesssim(t-t')^{-\frac\kappa2}(t')^{-1+\alpha-\frac\epsilon2}t^{-1+\frac\epsilon2}\|a\|_Y
}
and, additionally using \eqref{intermediateSlinftybound} and that $C^{1,\kappa}\cap L^\infty$ is a multiplication algebra,
\eqn{
\|\grad III\|_{C^\kappa}&\lesssim\int_{(t+t')/2}^t(t-s)^{-\frac12}(\|\grad v(s)\|_{C^\kappa}\|S(s,t')a\|_{L^\infty}+\|v(s)\|_{L^\infty}\|\grad S(s,t')a\|_{C^\kappa})ds\\
&\lesssim \int_{(t+t')/2}^t(t-s)^{-\frac12}(s^{-\frac32+\frac\epsilon2-\frac\kappa2}(t')^{-1+\alpha-\frac\epsilon2}\|a\|_Y+\|v(s)\|_{L^\infty}\|\grad S(s,t')a\|_{C^\kappa})ds.
}
The first term in the integrand contributes $O((t')^{-1+\alpha-\frac\epsilon2}t^{-1+\frac\epsilon2-\frac\kappa2}\|a\|_Y)$. Thus, defining $h(t)=(t-t')^{\frac12}t^\frac{1+\kappa}2\|\grad S(t,t')\|_{C^\kappa}$, we have
\eqn{
h(t)&\lesssim (t')^{-1+\alpha}(t/t')^\frac\epsilon2\|a\|_Y+\int_{(t+t')/2}^t(t-s)^{-\frac12}\|v(s)\|_{L^\infty}h(s)ds,
}
using that $t$ and $s$ are comparable on this time interval. We conclude once again by Lemma~\ref{gronwalllemma}, \eqref{vweightedlinftybound}, and \eqref{vl2intimelinftyinspacebound}.
\end{proof}

\subsection{Fixed Point Argument}\label{fixedpointsubsection}

We construct the perturbation $w^{(i)}$ by means of a fixed point argument on the Banach space\footnote{To keep the argument simple and to avoid spaces with piecewise-defined weights, we only execute the fixed point argument up to time $t=1$. Note that the question of existence of $w^{(i)}$ is only nontrivial when $t<\N{0}^{-2}$. Global-in-time existence follows easily from a small data argument; see \S\ref{proofoftheoremsection}.}
\eqn{
X=\{V\in C^0((0,1];C^{1,\kappa}(\Td;\Rd)):
\|V\|_X\coloneqq\sup_{t\in(0,1]}(t^{\frac12-\frac\alpha2}\|V\|_{L^\infty}+t^{1-\frac\alpha2}\|\grad V\|_{C^\kappa})<\infty\}.
}

Recall the parameter $\epsilon_0$ from Proposition~\ref{Festimateproposition} which quantifies the smallness of the residuals $F^{(i)}$. This smallness, combined with the zero initial data, enables us to construct the perturbations $w^{(i)}$ by a standard fixed point argument in $X$.

\begin{proposition}\label{wconstructionfixedpointproposition}
There exists $C_4>0$ such that for all $\epsilon_1\in(0,C_4^{-1})$ and sufficiently large $A>1$, there exist $w^{(1)},\,w^{(2)}\in B_X(0,\epsilon_1)$ such that $u^{(1)}=v^{(1)}+w^{(1)}$ and $u^{(2)}=v^{(2)}+w^{(2)}$ both satisfy \eqref{NSE}. Furthermore, $w^{(i)}(t)\to0$ in $\mathcal C^{-1+\frac\alpha2}$ as $t\to0$.
\end{proposition}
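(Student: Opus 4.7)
The plan is to realize $w^{(i)}$ as the unique fixed point in the closed ball $B_X(0,\epsilon_1)$ of the map
\begin{align*}
T(w)(t)\coloneqq\int_0^t S^{(i)}(t,t')\bigl(w\otimes w-F^{(i)}\bigr)(t')\,dt',
\end{align*}
which is the mild formulation \eqref{wmildform} of the equation for $w=u^{(i)}-v^{(i)}$ with zero initial data; a fixed point of $T$ automatically yields $u^{(i)}=v^{(i)}+w^{(i)}$ solving \eqref{NSE}. The first ingredient is the bilinear bound $\|w\otimes w\|_Y\lesssim\|w\|_X^2$. Applying the multiplication algebra property of $\mathcal{C}^{1+\kappa}\cap L^\infty$ from \S\ref{functionspacessection} component-wise to $w\otimes w$ yields $\|w\otimes w\|_{\mathcal{C}^{1+\kappa}}\lesssim\|w\|_{\mathcal{C}^{1+\kappa}}\|w\|_{L^\infty}$; since $w(t)$ is mean-zero on $\Td$ (being built by $S^{(i)}$ from divergence-form data), $\|w\|_{\mathcal{C}^{1+\kappa}}\sim\|\nabla w\|_{\mathcal{C}^\kappa}$, and plugging the $X$-weights into $\|\nabla(w\otimes w)\|_{\mathcal{C}^\kappa}\leq\|w\otimes w\|_{\mathcal{C}^{1+\kappa}}$ gives $t^{3/2-\alpha}\|\nabla(w\otimes w)\|_{\mathcal{C}^\kappa}\lesssim\|w\|_X^2$. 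The $L^\infty$ component is immediate from $\|w\otimes w\|_{L^\infty}\leq\|w\|_{L^\infty}^2$. Combined with Proposition~\ref{Festimateproposition}, $\|w\otimes w-F^{(i)}\|_Y\lesssim\|w\|_X^2+\epsilon_0$.

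Next, invoking Proposition~\ref{semigroupestimateproposition} pointwise in $t'$ and integrating, we obtain
\begin{align*}
\|T(w)(t)\|_{L^\infty}\lesssim(\|w\|_X^2+\epsilon_0)\int_0^t(t')^{-1+\alpha-\epsilon}\,t^{-1/2+\epsilon}\,dt'\lesssim(\|w\|_X^2+\epsilon_0)\,t^{-1/2+\alpha},
\end{align*}
so the weight $t^{1/2-\alpha/2}$ from $\|\cdot\|_X$ leaves the harmless factor $t^{\alpha/2}\leq 1$. The $\mathcal{C}^\kappa$ bound on $\nabla T(w)(t)$ proceeds identically after using the extra $(t-t')^{-1/2}$ loss from Proposition~\ref{semigroupestimateproposition} and evaluating the resulting Beta-function integral. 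Summing, $\|T(w)\|_X\lesssim\|w\|_X^2+\epsilon_0$. Contractivity $\|T(w_1)-T(w_2)\|_X\lesssim(\|w_1\|_X+\|w_2\|_X)\|w_1-w_2\|_X$ follows from the same calculation applied to $w_1\otimes w_1-w_2\otimes w_2=w_1\otimes(w_1-w_2)+(w_1-w_2)\otimes w_2$. Fixing $\epsilon_1<C_4^{-1}$ and then $\epsilon_0$ small enough that $C_4(\epsilon_1^2+\epsilon_0)\leq\epsilon_1$ (with $A$ taken large per Proposition~\ref{Festimateproposition}), Banach's fixed-point theorem produces the desired $w^{(i)}\in B_X(0,\epsilon_1)$, smooth for $t\in(0,1]$ by standard bootstrapping of the NSE linearized around the smooth field $v^{(i)}$.

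For the initial-time trace $w^{(i)}(t)\to 0$ in $\mathcal{C}^{-1+\alpha/2}$, we use the heat-kernel characterization $\|u\|_{\mathcal{C}^{-1+\alpha/2}}\sim\sup_{\tau>0}\tau^{(1-\alpha/2)/2}\|e^{\tau\Delta}u\|_{L^\infty}$, valid for zero-mean distributions on $\Td$. When $\tau\leq t$, the trivial bound $\|e^{\tau\Delta}w^{(i)}(t)\|_{L^\infty}\leq\|w^{(i)}(t)\|_{L^\infty}\lesssim\epsilon_1 t^{-1/2+\alpha/2}$ yields $\tau^{(1-\alpha/2)/2}\|e^{\tau\Delta}w^{(i)}(t)\|_{L^\infty}\lesssim\epsilon_1 t^{\alpha/4}\to 0$. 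When $\tau>t$, we unfold the mild form of $w^{(i)}$ and expand $S^{(i)}$ via its own Duhamel formula so that the enlarged heat propagator $e^{(t-t'+\tau)\Delta}\mathbb P\div$ appears explicitly in every term, producing an extra factor $(t+\tau)^{-1/2}\lesssim\tau^{-1/2}$ in the time integral. Repeating the Gronwall argument of Proposition~\ref{semigroupestimateproposition} with $(t-s+\tau)$ in place of $(t-s)$ at each heat-kernel appearance gives $\|e^{\tau\Delta}w^{(i)}(t)\|_{L^\infty}\lesssim(t+\tau)^{-1/2}t^\alpha$, whence $\tau^{(1-\alpha/2)/2}\|e^{\tau\Delta}w^{(i)}(t)\|_{L^\infty}\lesssim\tau^{-\alpha/4}t^\alpha\lesssim t^{3\alpha/4}\to 0$.

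The main obstacle is precisely this $\tau>t$ case: because $e^{\tau\Delta}$ does not commute with the convection $2\mathbb P\div(v^{(i)}\odot\cdot)$ driving $S^{(i)}$, Proposition~\ref{semigroupestimateproposition} cannot be applied directly with $t$ replaced by $t+\tau$. One must instead redo its proof with $e^{(t-s+\tau)\Delta}$ in place of $e^{(t-s)\Delta}$ throughout and re-verify that all time integrals close with only a mild additional loss, in order to extract the clean $(t+\tau)^{-1/2}$ gain used above.
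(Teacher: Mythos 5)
The fixed point argument in your first two paragraphs is correct and matches the paper's proof essentially step-for-step: same map $T$, same bilinear estimate $\|w\otimes w\|_Y\lesssim\|w\|_X^2$ (the paper derives it more directly from the definitions, you route through the $\mathcal C^{1+\kappa}\cap L^\infty$ algebra property, but the outcome is the same), and the same applications of Propositions~\ref{Festimateproposition} and \ref{semigroupestimateproposition} to close the contraction on $B_X(0,\epsilon_1)$.

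The final part, however, has a genuine gap — one which you yourself flag. You try to establish $w^{(i)}(t)\to 0$ in $\mathcal C^{-1+\alpha/2}$ through the heat-kernel characterization and then propagate the smoothing factor $e^{\tau\Delta}$ through the perturbed semigroup $S^{(i)}$. As you note, this does not work: $e^{\tau\Delta}$ does not commute with the convection $2\mathbb P\div(v^{(i)}\odot\cdot)$, so you cannot simply rerun Proposition~\ref{semigroupestimateproposition} with $t$ replaced by $t+\tau$, and ``re-verifying that all time integrals close'' would require an argument you have not given. The paper sidesteps this entirely by \emph{abandoning} the semigroup $S^{(i)}$ for this last estimate. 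Since $w$ solves $\partial_tw-\Delta w=\mathbb P\div(-2v\odot w-w\otimes w+F)$ with zero data, it obeys the \emph{plain} heat Duhamel formula
\eqn{
w(t)=\int_0^t e^{(t-t')\Delta}\mathbb P\div\bigl(-2v\odot w-w\otimes w+F\bigr)(t')\,dt',
}
with the convection term folded into the forcing. The heat–Besov estimate \eqref{besovheatestimate} with $m=1$, $s_1=-1+\alpha/2$, $s_2=\alpha/2$ gives $\|e^{(t-t')\Delta}\mathbb P\div g\|_{\mathcal C^{-1+\alpha/2}}\lesssim\|g\|_{C^{\alpha/2}}$ with \emph{no} singular power of $t-t'$, and the already-proved bounds $\|w\|_X\le\epsilon_1$, \eqref{vweightedlinftybound}, and Proposition~\ref{Festimateproposition} (plus interpolation to pass from $L^\infty$ and $C^{1,\kappa}$ to $C^{\alpha/2}$) make the integrand integrable near $t'=0$, yielding $\|w(t)\|_{\mathcal C^{-1+\alpha/2}}\lesssim t^{\alpha/4}$. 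This is the key observation you are missing: once $w$ is known in $X$, the convection term is just a forcing term that can be estimated crudely; you do not need to regard $w$ as a solution driven by $S^{(i)}$ at this stage.
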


Note that at this point, we do not make any claim about the initial data of the $u^{(i)}$. The second statement will eventually play a role by, in effect, enforcing that the data is determined only by $v^{(i)}$. We point out that $w^{(i)}\in X$ alone does not imply any control near the initial time.

\begin{proof}
Once again in this proof, we suppress the dependence on $i$ of $v^{(i)}$, $w^{(i)}$, $F^{(i)}$, and $S^{(i)}$.

In order for $u=v+w$ to solve \eqref{NSE}, the perturbation $w$ should solve
\eqn{
\dd_tw-\Delta w+2\mathbb P\div v\odot w=\mathbb P\div (F-w\otimes w)\\
w|_{t=0}=0
}
where $F$ is as in Proposition~\ref{Festimateproposition}, in particular satisfying \eqref{Frequirement}. We solve for $w$ as a fixed point of the map
\eqn{
T(w)(t)=\int_0^tS(t,t')(F-w\otimes w)(t')dt'
}
where $S(t,t')$ is the semigroup of the linearization around $v$ defined in \S\ref{semigroupsubsection}. Recall as well the function space $Y$ defined in \S\ref{semigroupsubsection}. From the following estimates, it will follow that $T$ is a well-defined operator on $X$.

Based on the definitions of $X$ and $Y$, we have the elementary product estimate
\eqn{
\|w\otimes w\|_Y&\lesssim\sup_{0<t\leq1}(t^{\frac32-\alpha}\|w(t)\|_{L^\infty}\|\grad w(t)\|_{C^\kappa}+t^{1-\alpha}\|w(t)\|_{L^\infty}^2)\lesssim \|w\|_X^2.
}
Combining this with Propositions~\ref{Festimateproposition} and \ref{semigroupestimateproposition} and the fact that $\epsilon<\alpha$,
\eqn{
\|T(w)(t)\|_{L^\infty}&\lesssim t^{-\frac12+\epsilon}\int_0^t(t')^{-1+\alpha-\epsilon}dt'\|F-w\otimes w\|_Y\\
&\lesssim t^{-\frac12+\epsilon}\int_0^t(t')^{-1+\alpha-\epsilon}dt'(\|w\|_X^2+\|F\|_Y)\\
&\lesssim t^{-\frac12+\alpha}(\|w\|_X^2+\epsilon_0)
}
and
\eqn{
\|\grad T(w)(t)\|_{C^\kappa}&\lesssim t^{-\frac12+\epsilon}\int_0^t(t')^{-1+\alpha-\epsilon}(t-t')^{-\frac12}dt'\|F-w\otimes w\|_Y\\
&\lesssim t^{-1+\alpha}(\|w\|_{X}^2+\epsilon_0).
}
It can be arranged via Proposition~\ref{Festimateproposition} that $\epsilon_0$ is so small that
\eqn{
\|T(w)\|_X&\leq O(\|w\|_X^2)+\frac12\epsilon_1,
}
having also used $t\leq1$. By a similar calculation,
\eqn{
\|T(w_1)-T(w_2)\|_X&\lesssim \|w_1-w_2\|_X(\|w_1\|_X+\|w_2\|_X).
}
Choosing $C_4$ sufficiently large to compensate for the implicit constants, it follows that $T$ is a contraction on the ball $B_X(0,\epsilon_1)$ for all $\epsilon_1\in(0,C_4^{-1})$, and we conclude by the Banach fixed point theorem.

Finally, we prove control of $w$ near the initial time using the Duhamel formula
\eqn{
w(t)=\int_0^te^{(t-t')\Delta}\mathbb P\div (-2v\odot w-w\otimes w+F)(t')dt'.
}
By \eqref{besovheatestimate}, \eqref{vweightedlinftybound}, Proposition~\ref{Festimateproposition}, interpolation, and the fact that $\|w\|_X\leq\epsilon_1$, we have
\eqn{
\|w(t)\|_{\mathcal C^{-1+\alpha/2}}&\lesssim \int_0^t\|-2v\odot w(t')-w\otimes w(t')+F(t')\|_{C^{\alpha/2}}dt'\\
&\lesssim \int_0^t((t')^{-1+\frac\alpha4}+(t')^{-1+\frac{3\alpha}4})dt'\\
&\lesssim t^{\alpha/4}
}
for all $t\in(0,1]$.
\end{proof}

\section{Proof of the Non-Uniqueness Theorem}\label{proofoftheoremsection}

We have shown that for $i\in\{1,2\}$, there exist $v^{(i)}\in C^\infty((0,1]\times\Td)$ and $w^{(i)}\in C^0((0,1];C^{1,\kappa})$ obeying suitable equations such that $u^{(i)}=v^{(i)}+w^{(i)}$ obey \eqref{NSE}. By standard regularity theory, $u^{(i)}$ is in fact smooth on $(0,1]\times\Td$.

Next we argue that both solutions attain the data $U^0$. We showed in Proposition~\ref{wconstructionfixedpointproposition} that $w^{(i)}(t)\to0$ in $\mathcal C^{-1+\alpha/2}(\mathbb{T}^3)$. This subcritical topology is too strong for the principal part of the solutions; instead we claim that $v^{(i)}(t)\to U^0$ in $\dot W^{-1,p}(\mathbb{T}^3)$, for all $1<p<\infty$. Recall from Definitions~\ref{initialdatadefinition}--\ref{principalpartdefinition} that for $i\in\{1,2\}$, we can write
\eqn{
v^{(i)}(t)=\curl\sum_{k\in\mathbb N_i}\curl\psi_k(t)+\sum_{k\in\mathbb N\setminus\mathbb N_i}\overline v_k(t),\quad U^0=\curl\sum_{k\in\mathbb N}\curl\psi_k^0
}
where $\mathbb N_i\coloneqq i-1,i+1,i+3,\ldots$ for $i\in\{1,2\}$. For the heat-dominated part of the data, we use Definition~\ref{principalpartdefinition}, \eqref{abounds}, and \eqref{mikadoprofilebound} to estimate
\eqn{
\Big\|\sum_{k\in\mathbb N_i}(\curl \psi_k(t)-\curl\psi_k^0)\Big\|_{L^{p}}&\lesssim\sum_k\|\curl(a_{j,k}(\Psi_{j,k}-\Psi_{j,k}^0))\|_{L^p}\\
&\lesssim\sum_{j,k}\|\grad(a_{j,k}\Psi_{j,k}^0)\|_{L^p}(1-e^{-|\eta_j|^2\N{k}^2t})\\
&\lesssim\sum_k|\widetilde\Omega_k|^\frac1p(1\wedge(\N{k}^2t))\\
&\leq\sum_{k\,:\,\N{k}\leq t^{-1/4}}\N{k}^2t+\sum_{k\,:\,\N{k}>t^{-1/4}}2^{-\frac kp}.
}
In the last two lines, we used the fact that $\|\nabla(a_{j,k}\Psi_{j,k}^0)\|_{L^\infty}=O(1)$ and that $\supp(a_{j,k}\Psi_{j,k}^0)\subset\widetilde\Omega_k$, whose volume is bounded by $2^{-k}$ as shown in Lemma~\ref{supportlemma}.
The terms in the first sum grow faster than exponentially so the sum is controlled by the largest term which is $O(t^\frac12)$. For the second sum, clearly the first index $k$ satisfying $\N{k+1}>t^{-1/4}$ will grow to infinity as $t\to0$. Since the terms themselves are independent of $t$, the sum is the shrinking tail of a convergent series so it goes to zero. We conclude that $\curl\sum_{k\in\mathbb N_i}\curl\psi_k(t)\to \curl\sum_{k\in\mathbb N}\curl\psi_k^0$ in $\dot W^{-1,p}$.

Likewise for the nonlinear evolution-dominated part, with Definition~\ref{principalpartdefinition}, we compute
\eqn{
&\curl\curl\psi_k^0-\overline{v}_k(t)\\
&\quad=\curl\curl\psi_k^0-\sum_j\mathbb{P}\div(e^{-2|\eta_j|^2\N{k+1}^2t}2^{-1}\N{k+1}^{-2}A_{j,k+1}|\eta_j|^2a_{j,k+1}^2(x)\theta_j\otimes \theta_j)\\
&\quad=\mathbb{P}\div\sum_j(1-e^{-2|\eta_j|^2\N{k+1}^2t})2^{-1}\N{k+1}^{-2}A_{j,k+1}|\eta_j|^2a_{j,k+1}^2(x)\theta_j\otimes \theta_j\\
&\quad\quad+\Big(\curl\curl\psi_k^0-2^{-1}\N{k+1}^{-2}\mathbb{P}\div\sum_ja_{j,k+1}|\eta_j|^2a_{j,k+1}^2(x)\theta_j\otimes \theta_j\Big).
}
By the same computation from Remark~\ref{dataremark}, the quantity in the last line identically vanishes. For the remaining part, we estimate using \eqref{abounds} and Lemma~\ref{supportlemma} that
\eqn{
&\|\mathbb{P}\div\sum_j(1-e^{-2|\eta_j|^2\N{k+1}^2t})2^{-1}\N{k+1}^{-2}A_{j,k+1}|\eta_j|^2a_{j,k+1}^2(x)\theta_j\otimes \theta_j\|_{\dot W^{-1,p}}\\
&\quad\lesssim \sum_j(1-e^{-2|\eta_j|^2\N{k+1}^2t})\N{k+1}^{-2}\|a_{j,k+1}\|_{L^{2p}}^2\\
&\quad\lesssim (1\wedge(\N{k+1}^2t))|\widetilde\Omega_k|^\frac1p
}
for all $p\in(1,\infty)$, since $\|a_{j,k+1}\|_{L^\infty}\lesssim \N{k+1}$ and $\supp a_{j,k+1}\subset\widetilde\Omega_k$ by Definitions~\ref{Omegadefintion} and \ref{dataiteratedefinition}. Summing, we find for all $t<1$ that $\sum_{k\in\mathbb N\setminus\mathbb N_i}(\overline v_k(t)-\curl\curl\psi^0_k)$ is absolutely summable in the $\dot W^{-1,p}$ norm and obeys the bound
\eqn{
\|\sum_{k\in\mathbb N\setminus\mathbb N_i}(\overline v_k(t)-\curl\curl\psi^0_k)\|_{\dot W^{-1,p}}&\lesssim\sum_k|\widetilde\Omega_k|^\frac1p(1\wedge(\N{k+1}^2t))
}
which converges to zero by the same reasoning as in the previous case. This completes the proof that the initial data is attained by both solutions.

Finally, we show that the two solutions are distinct. We consider a time scale when all but the lowest\footnote{This choice is made for simplicity, but any mode would exhibit the difference between the solutions on the appropriate time scale.} frequency mode should have dissipated away, for instance $t_0=\N{0}^{-2}$. Recall from Proposition~\ref{wconstructionfixedpointproposition} that $w\in B_X(0,\epsilon_1)$. From the triangle inequality,
\eqn{
\|v^{(1)}(t_0)-v^{(2)}(t_0)\|_{L^\infty}&\geq\|v_0(t_0)\|_{L^\infty}-\sum_{k\geq1}\|v_k(t_0)\|_{L^\infty}-\sum_{k\geq0}\|\overline v_k(t_0)\|_{L^\infty}\\
&\quad-\|w^{(1)}(t_0)\|_{L^\infty}-\|w^{(2)}(t_0)\|_{L^\infty}.
}
We estimate using \eqref{vkbounds} that
\eqn{
\sum_{k\geq1}\|v_k(t_0)\|_{L^\infty}&\lesssim\sum_{k\geq1}\N{k}\exp(-\N{k}^2\N{0}^{-2})\lesssim \N{1}\exp(-\N{1}^2\N{0}^{-2})\\
&\leq \N{0}(\N{1}/\N{0})^{-100}
}
and
\eqn{
\sum_{k\geq0}\|\overline v_k(t_0)\|_{L^\infty}&\lesssim\sum_{k\geq0}\N{k}\exp(-\N{k+1}^2\N{0}^{-2})\lesssim \N{0}\exp(-\N{1}^2\N{0}^{-2})
}
which is even smaller than the previous bound. For the $w^{(i)}$, we have by Proposition~\ref{wconstructionfixedpointproposition}
\eqn{
\|w^{(i)}(t_0)\|_{L^\infty}&\lesssim \epsilon_1t_0^{-\frac12+\frac\alpha2}=\epsilon_1\N{0}^{1-\alpha}=\epsilon_1A^{-\alpha\gamma}\N{0}.
}

It remains only to prove a lower bound on $v_0(t_0)$. Recall the notation of the proof of Proposition~\ref{Festimateproposition}, in particular the decomposition $v_0=v_0^p+v_0^e$. By \eqref{vkebounds},
\eqn{
\|v_0^e(t_0)\|_{L^\infty}\lesssim \M0\leq2\N{0}A^{-\gamma+1}.
}
We prove the lower bound on $v_0^p(x,t)=\N{0}^3|\eta_1|^2\Psi_{1,0}(x,t)$, recalling from Definition~\ref{dataiteratedefinition} that $a_{j,0}$ takes a particularly simple form when $k=0$. By inspection of Definition~\ref{mikadodefinition}, we easily have $\|\Psi_{1,0}(t_0)\|_{L^\infty}\gtrsim \N{0}^{-2}$
from which it follows $\|v_0^p(t_0)\|_{L^\infty}\geq C_5^{-1}\N{0}$ for a large enough constant $C_5$. Collecting everything,
\eqn{
\|v^{(1)}(t_0)-v^{(2)}(t_0)\|_{L^\infty}\geq C_5^{-1}\N{0}-A^{-c(b,\alpha,\gamma)}\N{0}
}
for a $c(b,\alpha,\gamma)>0$. With a sufficiently large choice of $A$, we conclude that $v^{(1)}$ and $v^{(2)}$ are distinct.

Finally, let us extend our solutions on $[0,1]$ to be global solutions. Recall from \eqref{vweightedlinftybound} that $\|\grad^m v^{(i)}|_{t=1}\|_{L^\infty}\leq \exp(-\N{0}^2/O_m(1))$ for all $m\geq0$. It follows that $\|P_Nv^{(i)}|_{t=1}\|_{L^\infty}\lesssim \langle N\rangle^{-5}\exp(-A^{2\gamma})$ and therefore $v^{(i)}|_{t=1}$ can be made arbitrarily small in, say, $H^\frac12(\Td)$ or $B^{-1}_{\infty,2}\subset BMO^{-1}$, and existence of a global-in-time solution follows. \null\hfill\qedsymbol

\appendix

\section{Tools from Convex Integration}\label{CIappendix}

The construction of the initial data $U^0$ in \S\ref{principalpartsection} makes use of several tools appearing in works on convex integration. In this appendix we quote those results for the reader's convenience.

First, we have the following rank-1 decomposition of a symmetric tensor field. This idea appeared first in the work of Nash~\cite{nash1954c}, and subsequently in many of the modern works on fluid equations.

\begin{lem}[Nash lemma]\label{nash-lemma}
There exist $c_0>0$, $\theta_1,\ldots,\theta_6\in\Zd$, and $\Gamma_1,\ldots,\Gamma_6\in C^\infty(B;\mathbb R)$ such that
\eqn{
M=\sum_{j=1}^6\Gamma_j(M)^2\theta_j\otimes \theta_j
}
for all $M\in B$, where $B=B(\Id,c_0)$ is the closed ball of radius $c_0$ centered at the identity in $S^{3\times 3}$. The $\Gamma_j$ obey the estimates
\eq{\label{Gammabound}
\frac1{100}\leq\Gamma_j\leq1\quad\forall M\in B
}
and
\eq{\label{GammaboundII}
\|\grad^m\Gamma_j\|_{L^\infty(B)}\lesssim_m1
}
for all $m\geq1$.
\end{lem}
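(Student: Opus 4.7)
The plan is to reduce the lemma to an exercise in linear algebra in the 6-dimensional space $S^{3\times 3}(\mathbb R)$. First I would choose six integer vectors $\theta_1,\ldots,\theta_6\in\Zd$ such that the rank-one matrices $\theta_j\otimes\theta_j$ form a basis of $S^{3\times 3}$ and such that $\Id$ has all strictly positive coordinates in that basis. Given such a choice, $\Gamma_j$ will be defined as the (smooth) square root of the corresponding coordinate functions, on a ball around $\Id$ small enough that those coordinates remain strictly positive.

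A concrete choice is
\[
\theta_1=(1,1,0),\ \theta_2=(1,-1,0),\ \theta_3=(1,0,1),\ \theta_4=(1,0,-1),\ \theta_5=(0,1,1),\ \theta_6=(0,1,-1).
\]
One computes directly that $\theta_1\otimes\theta_1+\theta_2\otimes\theta_2=2(e_1\otimes e_1+e_2\otimes e_2)$, and analogously for the other two pairs, so that $\sum_{j=1}^6\theta_j\otimes\theta_j=4\,\Id$ and hence $\Id=\sum_{j=1}^6\tfrac14\theta_j\otimes\theta_j$ with all six coefficients equal to $1/4$. The three differences $\theta_{2i-1}\otimes\theta_{2i-1}-\theta_{2i}\otimes\theta_{2i}$ recover (up to nonzero scalars) the three off-diagonal symmetric basis elements $e_a\otimes e_b+e_b\otimes e_a$, so the three sums and three differences together span all six standard basis elements of $S^{3\times 3}$. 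Consequently $\{\theta_j\otimes\theta_j\}_{j=1}^6$ is a basis of $S^{3\times 3}$, and the linear map $L:\mathbb R^6\to S^{3\times 3}$ defined by $L(\phi)=\sum_j\phi_j\,\theta_j\otimes\theta_j$ is an isomorphism. Its inverse gives six linear (hence $C^\infty$) coordinate functions $\phi_j:S^{3\times 3}\to\mathbb R$ with $\phi_j(\Id)=1/4$.

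By continuity of the linear map $\phi$, I can choose $c_0>0$ small enough that $\phi_j(M)\in[1/8,3/8]$ for every $j$ and every $M\in B=B(\Id,c_0)$. Then $\Gamma_j(M):=\sqrt{\phi_j(M)}$ is smooth on $B$, because $s\mapsto\sqrt s$ is $C^\infty$ on a neighborhood of $[1/8,3/8]$, and the identity $M=\sum_j\Gamma_j(M)^2\theta_j\otimes\theta_j$ on $B$ holds by construction. The bound $\tfrac1{100}\le\Gamma_j\le1$ follows from $\Gamma_j(M)\in[\sqrt{1/8},\sqrt{3/8}]$, and the estimate $\|\grad^m\Gamma_j\|_{L^\infty(B)}\lesssim_m1$ follows from the smoothness of $\sqrt{\cdot}$ on $[1/8,3/8]$ together with the fact that each $\phi_j$ is a fixed linear function of $M$; the implicit constants therefore depend only on $m$ and the fixed data $(\theta_j)_{j=1}^6$. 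No step here is a real obstacle: the entire content of the lemma is the existence of a basis of rank-one integer tensors in which $\Id$ has strictly positive coordinates, and the symmetric configuration above achieves both linear independence and positivity at once.
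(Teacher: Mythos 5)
Your proof is correct. Let me verify the key computations: with your choice of vectors, $\theta_1\otimes\theta_1+\theta_2\otimes\theta_2=2(e_1\otimes e_1+e_2\otimes e_2)$ and analogously for the other pairs, so the three pair-sums give coefficient vectors $(2,2,0)$, $(2,0,2)$, $(0,2,2)$ in the diagonal basis (determinant $-16\neq0$), while the three pair-differences give $2(e_a\otimes e_b+e_b\otimes e_a)$ for the three off-diagonal basis elements. This confirms $\{\theta_j\otimes\theta_j\}$ is a basis of $S^{3\times3}$, and $\sum_j\theta_j\otimes\theta_j=4\Id$ gives $\phi_j(\Id)=1/4$ for every $j$. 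The remaining steps (shrinking $c_0$, taking square roots, bounding derivatives) are routine and correct, and $[\sqrt{1/8},\sqrt{3/8}]\subset[1/100,1]$ as needed.

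Your route differs from the paper's. The paper picks a different set of six integer vectors (all with last coordinate $1$: $(0,0,1)$, $(2,0,1)$, $(1,1,1)$, $(-1,1,1)$, $(-2,0,1)$, $(0,-2,1)$) and simply \emph{displays} the six affine functions $\Gamma_j(M)^2$ in terms of $\epsilon=M-\Id$, with the constant term $\Gamma_j(\Id)^2$ ranging over $\{1/3,1/12,1/6\}$; the reader is then left to verify the resulting identity and positivity. Your argument is more structural: you exhibit a symmetric configuration for which $\{\theta_j\otimes\theta_j\}$ is visibly a basis and $\Id$ visibly has equal positive coordinates, and then invoke continuity rather than recording the explicit inverse map. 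The two approaches prove the same thing; yours is shorter to check and makes the conceptual content (existence of a rank-one integer basis with $\Id$ strictly interior to the positive cone it generates) explicit, while the paper's gives concrete formulas that one might prefer if precise numerical constants or a particular vector set were needed downstream. Note, as a minor point, that the $\theta_j$ are used later in the paper to orient Mikado pipes, but nothing in the Nash lemma or its downstream use constrains the choice beyond what you've arranged, so the substitution is harmless.
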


\begin{proof}
Let us define the vectors
\eqn{
\theta_1=(0,0,1),\quad \theta_2=(2,0,1),\quad \theta_3=(1,1,1),\\ \theta_4=(-1,1,1),\quad \theta_5=(-2,0,1),\quad \theta_6=(0,-2,1).
}
For any $M\in B$, let $\epsilon=M-\Id$. One can verify that the claim holds if
\eqn{
\Gamma_1(M)^2&=\frac{1}{3}-\frac{\epsilon _{11}}{4}-\frac{5 \epsilon _{22}}{12}-\frac{\epsilon _{23}}{3}+\epsilon _{33}\\
\Gamma_2(M)^2&=\frac{1}{12}+\frac{\epsilon _{11}}{8}-\frac{\epsilon _{12}}{4}+\frac{\epsilon _{13}}{4}-\frac{\epsilon _{22}}{24}-\frac{\epsilon _{23}}{12}\\
\Gamma_3(M)^2&=\frac{1}{6}+\frac{\epsilon _{12}}{2}+\frac{\epsilon _{22}}{6}+\frac{\epsilon _{23}}{3}\\
\Gamma_4(M)^2&=\frac{1}{6}-\frac{\epsilon _{12}}{2}+\frac{\epsilon _{22}}{6}+\frac{\epsilon _{23}}{3}\\
\Gamma_5(M)^2&=\frac{1}{12}+\frac{\epsilon _{11}}{8}+\frac{\epsilon _{12}}{4}-\frac{\epsilon _{13}}{4}-\frac{\epsilon _{22}}{24}-\frac{\epsilon _{23}}{12}\\
\Gamma_6(M)^2&=\frac{1}{6}+\frac{\epsilon _{22}}{6}-\frac{\epsilon _{23}}{6}.
}
By taking $c_0=\frac1{1000}$ (say), one can arrange that each of these expressions is bigger than $1/100$. Thus there is a smooth positive choice of $(\Gamma_j)_{j=1,\ldots,6}$. The estimates \eqref{Gammabound}--\eqref{GammaboundII} are elementary.
\end{proof}

Next, we define the following anti-divergence operator which first appeared in \cite{de2013dissipative}.

\begin{define}\label{Rdefinition}
We define the operator $\mathcal R:C^\infty(\Td;\mathbb R^d)\to C^\infty(\Td;S^{3\times 3})$ by $(\mathcal RV)_{ij}\coloneq\mathcal R_{ijk}V_k$ where
\eqn{
\mathcal R_{ijk}=-\frac12\Delta^{-2}\dd^3_{ijk}-\frac1{2}\Delta^{-1}\delta_{ij}\dd_k+\Delta^{-1}\delta_{jk}\dd_i+\Delta^{-1}\delta_{ik}\dd_j
}
where $\delta$ denotes the Kronecker delta.
\end{define}

The utility of Definition~\ref{Rdefinition} is that $\mathcal R$ is a right inverse for the divergence operator acting on symmetric tensor fields. For a proof, see for instance Lemma 4.3 in \cite{de2013dissipative}.

\begin{lem}\label{Rlemma}
For any $V\in C^\infty(\Td;\mathbb R^d)$, we have $\mathcal RV\in C^\infty(\Td;S^{3\times3}(\mathbb R))$ as well as the identity
\eqn{
    \div\mathcal RV=V-\fint_{\Td} V.
}
\end{lem}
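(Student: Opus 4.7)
The plan is to verify the three assertions (symmetry, smoothness, and the divergence identity) by direct calculation using the definitions of $\Delta^{-1}$ and $\Delta^{-2}$ as Fourier multipliers on $\Td$, with symbols $-|\xi|^{-2}$ and $|\xi|^{-4}$ respectively for $\xi\neq0$ and extended by zero at $\xi=0$. With this convention, $\Delta^{-1}\Delta V=V-\fint_\Td V$, which is the only subtlety that will appear.

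First, the symmetry $(\mathcal R V)_{ij}=(\mathcal R V)_{ji}$ is immediate from inspecting $\mathcal R_{ijk}$: the first two summands are manifestly symmetric in $(i,j)$, while swapping $i\leftrightarrow j$ interchanges the last two summands $\Delta^{-1}\delta_{jk}\dd_i$ and $\Delta^{-1}\delta_{ik}\dd_j$.

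Second, smoothness of $\mathcal RV$ follows from standard Fourier multiplier theory on $\Td$: each of $\Delta^{-2}\dd^3_{ijk}$ and $\Delta^{-1}\dd_k$ is a Fourier multiplier of order $\leq0$ (symbols $i\xi_i\xi_j\xi_k/|\xi|^4$ and $-i\xi_k/|\xi|^2$ respectively), so if $V\in C^\infty(\Td)$ has rapidly decaying Fourier coefficients, then so does $\mathcal RV$.

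The main step is the divergence identity, which I would prove by computing $\dd_j\mathcal R_{ijk}$ termwise (summing $j$) using $\dd^2_{jj}=\Delta$ and contracting the Kronecker deltas. The four summands contribute
\eqn{
-\tfrac12\Delta^{-2}\dd_i\dd_k\Delta,\quad -\tfrac12\Delta^{-1}\dd_i\dd_k,\quad \Delta^{-1}\dd_i\dd_k,\quad \Delta^{-1}\delta_{ik}\Delta,
}
respectively, and since $\dd_i\dd_k$ annihilates constants one has $\Delta^{-2}\dd_i\dd_k\Delta=\Delta^{-1}\dd_i\dd_k$ as operators on $C^\infty(\Td)$. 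The first three terms then combine with coefficients $-\tfrac12-\tfrac12+1=0$ and cancel. The remaining term applied to $V_k$ gives $\delta_{ik}(V_k-\fint_\Td V_k)=V_i-\fint_\Td V_i$, which is exactly the claimed formula. The only ``obstacle'' is bookkeeping of the zero Fourier mode, but this is handled cleanly by the observation that $\dd_i\dd_k$ kills constants, so the mean correction $\fint_\Td V$ only appears through the final $\Delta^{-1}\delta_{ik}\Delta V_k$ term.
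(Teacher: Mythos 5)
Your proof is correct. The paper does not prove this lemma itself, instead citing Lemma~4.3 of De Lellis--Sz\'ekelyhidi~\cite{de2013dissipative}; your direct Fourier-multiplier verification (symmetry by inspection, smoothness from boundedness of the order-$(-1)$ symbols, and the divergence identity via the cancellation $-\tfrac12-\tfrac12+1=0$ plus tracking of the zero mode through $\Delta^{-1}\Delta=\Id-\fint_\Td$) is the standard argument from that reference and is carried out correctly.
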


Variants of the following stationary phase lemma can be found in the papers of Buckmaster--De Lellis--Isett--Sz\'ekelyhidi~\cite{buckmaster2015anomalous} and De Lellis--Sz\'ekelyhidi~\cite{de2013dissipative}.

\begin{lem}\label{stationaryphase}
With $\mathcal R$ as in Definition~\ref{Rdefinition},
\eqn{
\|\mathcal R(a(x)e^{i\lambda k\cdot x})\|_{C^\beta}&\lesssim_{\beta,m}\lambda^{-1+\beta}\|a\|_{L^\infty}+\lambda^{-m+\beta}\|\grad^ma\|_{L^\infty}+\lambda^{-m}\|\grad^ma\|_{C^\beta}.
}

\end{lem}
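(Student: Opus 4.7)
By linearity, it suffices to bound a single Fourier multiplier $T=r(D)$, where $r\in C^\infty(\mathbb R^3\setminus 0)$ is homogeneous of degree $-1$; each component of $\mathcal R$ has this form. Via the modulation identity
$$T(ae^{i\lambda k\cdot x})=e^{i\lambda k\cdot x}\,T_\lambda a,$$
with $T_\lambda$ the Fourier multiplier of symbol $r(\xi+\lambda k)$ (set to zero at the singular lattice point $\xi=-\lambda k$), and the algebra bound $\|e^{i\lambda k\cdot x}g\|_{C^\beta}\lesssim \lambda^\beta\|g\|_{L^\infty}+\|g\|_{C^\beta}$, the claim reduces to showing
\begin{align*}
\|T_\lambda a\|_{L^\infty}&\lesssim \lambda^{-1}\|a\|_{L^\infty}+\lambda^{-m}\|\grad^m a\|_{L^\infty},\\
\|T_\lambda a\|_{C^\beta}&\lesssim \lambda^{-1+\beta}\|a\|_{L^\infty}+\lambda^{-m}\|\grad^m a\|_{C^\beta}.
\end{align*}

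The plan is to split $a=a_L+a_H$ with $a_L:=P_{<\lambda/4}a$. On the Fourier support of $a_L$ one has $|\xi+\lambda k|\sim\lambda$, so $r(\xi+\lambda k)$ is smooth there and I would Taylor-expand to order $m$ around $\xi=0$:
$$r(\xi+\lambda k)=\sum_{|\alpha|<m}\frac{\partial^\alpha r(\lambda k)}{\alpha!}\xi^\alpha+R_m(\xi),$$
where $|\partial^\alpha r(\lambda k)|\lesssim \lambda^{-1-|\alpha|}$ by homogeneity and $|R_m(\xi)|\lesssim \lambda^{-1-m}|\xi|^m$ for $|\xi|<\lambda/4$. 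The principal ($\alpha=0$) term contributes $\lambda^{-1}r(k)\,a_L$, giving $\lambda^{-1+\beta}\|a\|_{L^\infty}$ in $C^\beta$ via the Bernstein-type bound $\|a_L\|_{C^\beta}\lesssim \lambda^\beta\|a\|_{L^\infty}$. Intermediate terms $\lambda^{-1-j}\grad^j a_L$ for $1\leq j<m$ are absorbed into the target RHS using the interpolation $\|\grad^j a\|_X\leq \|a\|_X^{1-j/m}\|\grad^m a\|_X^{j/m}$ and Young's inequality, in either $X=L^\infty$ or $X=C^\beta$. After writing its symbol as $\lambda^{-1-m}|\xi|^m$ times a uniformly bounded smooth symbol on the support of $\hat a_L$, the Taylor remainder contributes at most $\lambda^{-1-m}\|\grad^m a\|_X$, comfortably within the target (this is where the $\lambda^{-m}\|\grad^m a\|_{C^\beta}$ term is produced).

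For the high-frequency part, I would dyadically decompose $a_H=\sum_{N\geq\lambda/4}P_Na$ and use Bernstein ($\|P_N a\|_{L^\infty}\lesssim N^{-m}\|\grad^m a\|_{L^\infty}$ and $\|P_Na\|_{C^\beta}\sim N^\beta\|P_Na\|_{L^\infty}$). On each shell $|\xi|\sim N\gtrsim\lambda$ the symbol satisfies $|r(\xi+\lambda k)|\lesssim N^{-1}$, except at the $O(1)$ exceptional lattice points near $-\lambda k$; for those modes the Fourier coefficients obey $|\hat a(\xi)|\lesssim\lambda^{-m}\|\grad^m a\|_{L^\infty}$ by integration by parts. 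Summing the geometric series over $N\gtrsim\lambda$ yields $\|T_\lambda a_H\|_{L^\infty}\lesssim\lambda^{-1-m}\|\grad^m a\|_{L^\infty}$ and $\|T_\lambda a_H\|_{C^\beta}\lesssim \lambda^{-1-m+\beta}\|\grad^m a\|_{L^\infty}$, both subsumed by the target. The main technical obstacle will be verifying Mikhlin-type multiplier bounds in the Hölder scale for the Taylor remainder piece, and treating the exceptional frequencies $\xi\approx -\lambda k$ where $r$ fails to decay; both steps are standard once the discrete structure of $\mathbb Z^3$ is exploited, and fit into the template of the stationary-phase arguments in \cite{buckmaster2015anomalous,de2013dissipative}.
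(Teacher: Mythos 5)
The paper does not prove Lemma~\ref{stationaryphase}: it quotes it from Buckmaster--De Lellis--Isett--Sz\'ekelyhidi \cite{buckmaster2015anomalous} and De Lellis--Sz\'ekelyhidi \cite{de2013dissipative}. So there is no in-paper proof to compare against; I evaluate your argument on its own merits. The modulation reduction, the Littlewood--Paley split $a=a_L+a_H$, and the order-$m$ Taylor expansion of $r(\xi+\lambda k)$ about $\xi=0$ on the support of $\widehat{a_L}$ are all sound, and the low-frequency analysis (principal term, intermediate terms via interpolation, remainder via a smooth degree-$m$ factorization) does close.

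There is a genuine gap in the high-frequency part, in the treatment of the frequencies $\xi$ near the singular lattice point $-\lambda k$. You assert that on the shell $|\xi|\sim N\gtrsim\lambda$ the symbol satisfies $|r(\xi+\lambda k)|\lesssim N^{-1}$ ``except at the $O(1)$ exceptional lattice points near $-\lambda k$.'' This is not accurate: on the shell $N\sim\lambda$ there are roughly $\lambda^3$ lattice points where the bound $\lesssim N^{-1}$ fails, namely all $\xi$ with $1\leq|\xi+\lambda k|\ll\lambda$, at which $|r(\xi+\lambda k)|\sim|\xi+\lambda k|^{-1}$ ranges from $\lambda^{-1}$ up to $O(1)$. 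If one handles these by the pointwise decay $|\hat a(\xi)|\lesssim\lambda^{-m}\|\grad^m a\|_{L^\infty}$ and sums in $\ell^1$, one gets
\[
\sum_{1\leq|\xi+\lambda k|<\lambda/10}|\xi+\lambda k|^{-1}\,\lambda^{-m}\|\grad^m a\|_{L^\infty}\ \sim\ \lambda^{2-m}\|\grad^m a\|_{L^\infty},
\]
which is $\lambda^2$ worse than any term on the right-hand side of the lemma. The $\ell^1$ triangle inequality over lattice modes throws away essential cancellation here.

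The fix is to work at the operator level rather than term by term. In the unmodulated picture, writing $g=\mathcal R(ae^{i\lambda k\cdot x})$, the problematic range is precisely the output frequencies $|\eta|\lesssim\lambda$ of $g$. There one observes that $P_{N}(ae^{i\lambda k\cdot x}) = P_{N}\big((P_{>\lambda/2}a)\,e^{i\lambda k\cdot x}\big)$ for $N<\lambda/10$, since only frequencies of $a$ above $\lambda/2$ can land at output frequency below $\lambda/10$. Then, because $\mathcal R$ has a degree-$(-1)$ symbol on $\Td$, $\mathcal RP_N$ is $L^\infty$-bounded with norm $\lesssim N^{-1}$ (the periodized kernel of a degree-$(-1)$ multiplier has a $|x|^{-2}$ singularity, which is integrable in dimension $3$). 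This yields $\|P_Ng\|_{L^\infty}\lesssim N^{-1}\lambda^{-m}\|\grad^m a\|_{L^\infty}$, hence $\sup_{N<\lambda/10}N^\beta\|P_Ng\|_{L^\infty}\lesssim\lambda^{-m}\|\grad^m a\|_{L^\infty}$, and after undoing the modulation one obtains the contribution $\lambda^{-m+\beta}\|\grad^m a\|_{L^\infty}$, which is exactly the middle term of the lemma. Relatedly, your intermediate reduced target $\|T_\lambda a\|_{C^\beta}\lesssim\lambda^{-1+\beta}\|a\|_{L^\infty}+\lambda^{-m}\|\grad^m a\|_{C^\beta}$ is too strong: the near-$(-\lambda k)$ modes necessarily produce a $\lambda^{-m+\beta}\|\grad^m a\|_{L^\infty}$ contribution after modulation, so that term must be allowed in the reduced target as well.
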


Finally, we quote the following ``improved H\"older's inequality'' that first appeared in work of Modena--Sz\'ekelyhidi \cite[Lemma 2.1]{modena2018non}, inspired by a similar lemma of Buckmaster--Vicol~\cite{buckmaster2019nonuniqueness}. The idea, roughly speaking, is that the gains due to intermittency at separated scales is multiplicative.

\begin{lem}\label{BVlemma}
For $f,g\in C^\infty(\Td)$ and $\lambda\in\mathbb N$,
\eqn{
\|f(\cdot)g(\lambda\cdot)\|_{L^1}\leq\|f\|_{L^1}\|g\|_{L^1}+O(\lambda^{-1}\|f\|_{C^1}\|g\|_{L^1}).
}
\end{lem}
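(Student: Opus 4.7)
My plan is to exploit the periodicity of $g(\lambda\cdot)$ at a scale much finer than that of $f$. Since $\lambda\in\mathbb N$, one can partition $\Td$ into $\lambda^3$ disjoint cubes $\{Q_\alpha\}$ of side length $2\pi/\lambda$, on each of which $g(\lambda\cdot)$ traces out exactly one full period. By change of variables and $2\pi$-periodicity of $g$,
\[
\int_{Q_\alpha}|g(\lambda x)|\,dx = \lambda^{-3}\|g\|_{L^1}
\]
for every $\alpha$, independent of the cube. This is the key ingredient that decouples the scales of $f$ and $g(\lambda\cdot)$.

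Next I would freeze the slowly varying function $f$ on each small cube: for $x\in Q_\alpha$ and any chosen corner $x_\alpha\in Q_\alpha$, the bound $\bigl||f(x)|-|f(x_\alpha)|\bigr|\leq|f(x)-f(x_\alpha)|\leq C\lambda^{-1}\|f\|_{C^1}$ holds since $\mathrm{diam}(Q_\alpha)\lesssim\lambda^{-1}$ and $|f|$ is Lipschitz with constant at most $\|\nabla f\|_{L^\infty}$. Summing the resulting estimate on each $\int_{Q_\alpha}|f||g(\lambda\cdot)|$, one obtains a main term $\lambda^{-3}\|g\|_{L^1}\sum_\alpha|f(x_\alpha)|$ and an error of size $O(\lambda^{-1}\|f\|_{C^1}\|g\|_{L^1})$.

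The final step is to recognize the main term as a Riemann sum: using $|Q_\alpha|=(2\pi)^3\lambda^{-3}$,
\[
\lambda^{-3}\sum_\alpha|f(x_\alpha)| = (2\pi)^{-3}\sum_\alpha|f(x_\alpha)|\,|Q_\alpha| \leq (2\pi)^{-3}\bigl(\|f\|_{L^1}+C\lambda^{-1}\|f\|_{C^1}|\Td|\bigr),
\]
the error again coming from replacing $|f(x)|$ by $|f(x_\alpha)|$ cubewise. Multiplying by $\|g\|_{L^1}$ and combining with the error from the previous step yields a bound of the form $(2\pi)^{-3}\|f\|_{L^1}\|g\|_{L^1}+O(\lambda^{-1}\|f\|_{C^1}\|g\|_{L^1})$, which \emph{a fortiori} implies the stated inequality. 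I do not anticipate any real obstacle beyond the bookkeeping of the $(2\pi)^3$ volume factors; the proof is in the spirit of the standard averaging arguments in homogenization and in the Modena--Sz\'ekelyhidi and Buckmaster--Vicol works cited in the statement.
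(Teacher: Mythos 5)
Your proof is correct and complete. The argument — partition $\Td$ into $\lambda^3$ cells of side $2\pi/\lambda$ on each of which $g(\lambda\cdot)$ runs through exactly one period, freeze $f$ on each cell at a cost of $O(\lambda^{-1}\|f\|_{C^1})$, factor out $\int_{Q_\alpha}|g(\lambda\cdot)| = \lambda^{-3}\|g\|_{L^1}$, and then recognize $\lambda^{-3}\sum_\alpha|f(x_\alpha)|$ as a Riemann sum for $(2\pi)^{-3}\int_\Td|f|$ with another $O(\lambda^{-1}\|f\|_{C^1})$ error — is sound, and you are right that it yields the sharper constant $(2\pi)^{-3}=|\Td|^{-1}$ in front of the main term; equivalently, the lemma is most naturally stated with the normalized $L^1$ norm $\fint_\Td|\cdot|$, and the paper's stated form with constant $1$ follows a fortiori. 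The paper itself quotes this lemma from Modena--Sz\'ekelyhidi without reproducing a proof, so there is no in-paper argument to compare against.

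For what it is worth, the proof in the cited source~\cite{modena2018non} proceeds differently: one writes $|g(\lambda\cdot)| = \fint_\Td|g| + h(\lambda\cdot)$ where $h = |g| - \fint|g|$ has zero mean, observes that $h(\lambda\cdot)$ admits a primitive of size $O(\lambda^{-1}\|g\|_{L^1})$ (by solving a divergence equation, or equivalently by noting that its Fourier support lies at frequencies $\geq\lambda$), and integrates by parts against $|f|$, which is where the $\|f\|_{C^1}$ enters. Your cube-decomposition argument avoids this Fourier/anti-divergence input and is therefore more elementary; it also makes the role of the hypothesis $\lambda\in\mathbb N$ completely transparent, namely that the small cubes tile $\Td$ while aligning exactly with the period lattice of $g(\lambda\cdot)$. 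Both routes are comparably short and both rely essentially on that integrality, so the choice between them is a matter of taste.
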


\section{Tools for Constructing the Semigroup}

\subsection{Existence for the Perturbed Stokes System}\label{linearappendix}

In the interest of the argument being self-contained, we construct solutions to the Navier--Stokes linearized around the principal part of the solutions. This will be necessary for defining the semigroup $S(t;t')$ in \S\ref{semigroupsubsection}. One can find a much more precise result in the book of Lemarie-Rieusset~\cite[Theorem 21.3]{lemarie2002recent} where the drift is allowed to be in a very refined critical space.

\begin{proposition}\label{linearweaksolutionsexistence}
Fix $t_0\in \mathbb{R}$.
Let $v\in C^\infty([t_0,\infty)\times\Td;\Rd)$ and $w^0\in C^\infty(\Td;\Rd)$ be divergence-free vector fields. Then there exists a solution $w\in C^\infty([t_0,\infty)\times\Td;\Rd)$ of
\begin{equation}\begin{aligned}\label{transportdiffusionequation}
    \dd_t w-\Delta w+v\cdot\grad w+w\cdot\grad v+\grad p=0\\
    \div w=0\\
    w(t_0,x)=w^0(x).
\end{aligned}\end{equation}
\end{proposition}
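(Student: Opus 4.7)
The plan is to apply the Leray projection $\mathbb P$ to eliminate the pressure, reducing \eqref{transportdiffusionequation} to the linear parabolic system
\eqn{
\dd_tw-\Delta w+\mathbb P(v\cdot\grad w+w\cdot\grad v)=0,\quad w|_{t=t_0}=w^0
}
on divergence-free vector fields; the pressure is then recovered as $\grad p=-(\Id-\mathbb P)(v\cdot\grad w+w\cdot\grad v)$, which is smooth whenever $w$ and $v$ are. Since $v$ is smooth with bounds on each compact sub-interval of $[t_0,\infty)\times\Td$, this is a linear equation with smooth, bounded coefficients and the existence of a smooth solution is standard. I would carry it out via Galerkin approximation combined with energy estimates.

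Concretely, set up a Galerkin scheme using the Littlewood--Paley projection $P_{\leq N}$ from \S\ref{littlewoodpaleysection}, solving
\eqn{
\dd_tw_N-\Delta w_N+P_{\leq N}\mathbb P(v\cdot\grad w_N+w_N\cdot\grad v)=0,\quad w_N|_{t=t_0}=P_{\leq N}w^0
}
on the finite-dimensional space of divergence-free trigonometric polynomials of degree $\leq N$. This is an ODE with smooth coefficients, so Picard--Lindel\"of gives a unique local smooth solution, made global by the energy estimate below.

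The key a priori bound comes from pairing the equation with $w_N$ in $L^2(\Td)$. Because both $v$ and $w_N$ are divergence-free, the drift term cancels:
\eqn{
\int_\Td (v\cdot\grad w_N)\cdot w_N\,dx=\tfrac12\int_\Td v\cdot\grad|w_N|^2\,dx=0,
}
while $|\int(w_N\cdot\grad v)\cdot w_N\,dx|\leq\|\grad v\|_{L^\infty}\|w_N\|_{L^2}^2$. This yields
\eqn{
\tfrac12\tfrac{d}{dt}\|w_N\|_{L^2}^2+\|\grad w_N\|_{L^2}^2\leq\|\grad v(t)\|_{L^\infty}\|w_N\|_{L^2}^2,
}
and Gronwall produces an $N$-uniform bound $\|w_N(t)\|_{L^2}\lesssim\|w^0\|_{L^2}\exp(\int_{t_0}^t\|\grad v\|_{L^\infty}ds)$ on any compact sub-interval. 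Differentiating the equation $m$ times and repeating the same calculation gives $N$-uniform bounds on $\|w_N(t)\|_{H^m}$ for every $m$: every commutator term is controlled by $\|v\|_{C^{m+1}}$ (finite on compact intervals by hypothesis) and lower-order Sobolev norms of $w_N$, so one closes the estimate inductively in $m$ using Gronwall at each level.

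With these bounds in hand, a standard Aubin--Lions compactness argument produces a subsequence $w_{N_k}\to w$ strongly in $C^0([t_0,T];H^m(\Td))$ for every $T<\infty$ and $m\geq0$. The limit $w$ is divergence-free, lies in $C^\infty([t_0,\infty)\times\Td;\Rd)$ by Sobolev embedding and parabolic regularity, and solves the projected system. There is no real ``hard part'' in this proof: the only obstruction that is not automatic is showing the Sobolev norms stay finite on bounded time intervals, and this is handled by the linearity of the equation (no critical nonlinear balance as in the genuine Navier--Stokes problem) together with the assumed smoothness of $v$.
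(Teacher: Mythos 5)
Your proof is correct and follows essentially the same Galerkin-plus-energy-estimate strategy as the paper, but with two worthwhile differences in the details. For the $L^2$ estimate you integrate by parts so that $\int(v\cdot\grad w_N)\cdot w_N$ vanishes and then pay $\|\grad v\|_{L^\infty}$ on the zeroth-order term; the paper instead moves the derivative onto $w_m$ in the term $\int(w_m\cdot\grad v)\cdot w_m$, rewrites it as $\int (w_m\cdot\grad w_m)\cdot v$, and absorbs it by Peter--Paul into the dissipation, thereby costing only $\|v\|_{L^2_tL^\infty_x}^2$. Both close Gr\"onwall on compact intervals, but the paper's choice is deliberate: the quantitative version of this estimate is reused in \S\ref{semigroupsubsection} near $t'=0$, where $\|\grad v(t)\|_{L^\infty}\sim t^{-1}$ is not integrable in time while $\|v\|_{L^2_tL^\infty_x}$ grows only logarithmically (Proposition~\ref{vestimatesproposition}). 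For higher regularity you iterate the energy estimate at each Sobolev level $H^m$ via commutators; the paper instead bootstraps the weak $L^2$ solution through the mild (Duhamel) formulation with the Stokes semigroup, using the smoothing estimate \eqref{heatandderivativemultiplierestimate}. Both are valid for a linear parabolic system with smooth coefficients, and the mild-formulation route sidesteps commutator bookkeeping while matching the style of analysis in the rest of \S\ref{perturbationsection}. One small imprecision in your compactness step: Aubin--Lions gives strong convergence in $C^0([t_0,T];H^{m-\epsilon})$ rather than $C^0([t_0,T];H^m)$; this is harmless since your a priori bounds hold at every level $m$, but the limiting space should be stated with the $\epsilon$.
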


\begin{proof}
It suffices to prove the proposition on the finite interval $[t_0,T]$ for all $T>0$. We begin by proving the existence of a weak solution in $L_t^\infty L_x^2\cap L_t^2H_x^1([t_0,T]\times\Td;\Rd)$ by the Galerkin method as in \cite[Theorem 3.5]{tsai2018lectures}. Fix $\{\phi_j\}_{j=1}^\infty\subset C^\infty(\Td;\Rd)$, a divergence-free orthonormal basis of $H^1(\mathbb{T}^d; \mathbb{R}^d)$. Consider the finite rank approximation of $w_0$ in this basis:
$$ w^0_m(x) = \sum_{j=1}^{m} \phi_j(x)(\phi_j, w^0)_{L^2}.$$
We search for a solution of \eqref{transportdiffusionequation} in this basis, namely a vector field of the form
\eq{\label{finitedimensionalansatz}w_m(x,t) = \sum_{j=1}^m g_j(t)\phi_j(x)}
and desire to solve a system of ODE's for the coefficient functions $g_j(t)$ with initial data given by $g_j(t_0)= (\phi_j, w^0)$.

We would like to choose the coefficients in the ansatz \eqref{finitedimensionalansatz} so that $w_m$ exactly obeys \eqref{transportdiffusionequation}. Of course this is not possible because the partial basis is not closed under the operations in the equation; instead, we solve \eqref{transportdiffusionequation} projected onto the subspace. Namely,
\eqn{
(\phi_i,\sum_{j=1}^m(g_j'\phi_j-g_j\Delta\phi_j+g_jv\cdot\grad\phi_j+g_j\phi_j\cdot\grad v))_{L^2}=0
}
which leads to the ODE system
\eqn{g_i'(t) +a_{ij}(t)g_j =0\\
g_i|_{t=t_0}=(\phi_i,w^0)_{L^2}}
for $i=1,2,\ldots,m$, where
\eqn{a_{ij}(t)=(\varphi_i,-\Delta\varphi_j+v(t)\cdot\grad\varphi_j+\varphi_j\cdot\grad v(t))_{L^2}.}

Because $v\in L^1([t_0,\infty);C^1(\Td))$, the norm of $a_{ij}$ is integrable and standard ODE theory gives a unique global solution $g_1(t), \ldots g_m(t)$ to the system. Having constructed the coefficients, we can build the function $w_m$ as defined in \eqref{finitedimensionalansatz}. By construction, $w_m$ obeys \eqref{transportdiffusionequation} projected onto any of the $\phi_1,\ldots,\phi_m$. In particular we may test against $w_m$ to obtain
$$\frac{1}{2}\int_{\mathbb{T}^d}|w_m(t)|^2 dx + \int_{t_0}^t \int_{\mathbb{T}^d} |\nabla w_m|^2 dxdt = \frac{1}{2}\int_{\mathbb{T}^d} |w_m^0|^2 dx +\int_{t_0}^t\int_{\mathbb{T}^d} (w_m \cdot \nabla w_m) \cdot v dxdt,$$
having used that $v$ and $w_m$ are divergence-free. We estimate the last term using the Peter-Paul inequality:
\eqn{
\int_{t_0}^t\int_{\mathbb{T}^d} (w_m \cdot \nabla w_m) \cdot v dxdt &\leq \int_{t_0}^t \|w_m \|_{L^2}\|\nabla w_m\|_{L^2}\|v\|_{L^\infty}dt\\
&\leq \int_{t_0}^t \Big(\frac14\|\nabla w_m\|_{L^2}^2 + 4\|w_m\|_{L^2}^2\|v\|_{L^\infty}^2\Big)dt.
}

Inserting this back in to the energy equality, we conclude that
$$\frac{1}{2}\int_{\mathbb{T}^d}|w_m(t)|^2 dx  +\frac{1}{4}\int_{t_0}^t \int_{\mathbb{T}^d} |\nabla w_m|^2dxdt\leq \frac{1}{2}\int_{\mathbb{T}^d} |w_m^0|^2 dx +\int_{t_0}^t 4\|w_m\|_{L^2}^2\|v\|_{L^\infty}^2dt.$$
By Gr\"onwall's inequality, we have
$$\|w_m\|_{L^\infty_t L^2_x([t_0,T] \times \mathbb{T}^d)}^2 +\frac{1}{2}\|\nabla w_m \|_{L^2_t L^2_x([t_0,T] \times \mathbb{T}^d)}^2 \leq \|w^0\|_{L^2(\Td)}^2 + 8 \exp( \|v\|_{L^2_tL^\infty_x([t_0,T] \times \mathbb{T}^d)}^2).$$
Thus $w_m$ is a priori bounded in the energy space, uniformly in $m$. As a result, there exists a limiting function $w_\infty(x,t)$ such that $w_m$ converges weak-$\ast$ to $w_\infty$ in $L^\infty_t L^2_x$ and $w_m$ converges weakly to $w_\infty$ in $L^2_t L^2_x$. Moreover $w_\infty$ satisfies the same a priori bound as the $w_m$. To show that $w_\infty$ is a weak solution of \eqref{transportdiffusionequation}, it suffices to test against functions of the form $\zeta(x,t):=\theta(t) \phi_j(x)$. By construction, for all $m\geq j$ we have
$$\int_{t_0}^T\int_{\mathbb{T}^d} -w_m\cdot \partial_t \zeta -  w_m \cdot \Delta \zeta -(v\odot w_m) :\nabla \zeta dx =0.
$$
By the weak convergence of $w_m$, we can pass to the limit and conclude $w_\infty$ satisfies \eqref{transportdiffusionequation}. From the energy bounds on $w=w_\infty$, it is easy to see that $\partial_t w_\infty \in L^2_t H^{-1}_x$ exists in the weak sense, so we in fact have $w_\infty \in C([t_0,T], L^2_x)$ and convergence to the initial data $w^0$.

Finally, one can upgrade this weak solution to a strong one. One approach is to consider $w$ as a solution of the forced Stokes equation
\eqn{
\dd_tw-\Delta w+\grad p=\div f\\
\div w=0\\
w(t_0,x)=w^0(x)
}
where $f=2v\odot w\in L_t^\infty L_x^2$. The weak solution can be expressed in terms of the Stokes semigroup $e^{t\Delta}\mathbb P$ as
\eqn{
w(t)=e^{t\Delta}w^0+\int_0^te^{(t-t')\Delta}\mathbb P\div f(t')dt'.
}
From this formula, one can iteratively bootstrap regularity using the smoothing effect of the heat kernel via \eqref{heatandderivativemultiplierestimate}. For instance,
\eqn{
\|w(t)\|_{H^\frac12}&\lesssim \|w^0\|_{H^\frac12}+\int_0^t(t-t')^{-\frac34}\|f(t')\|_{L^2}dt'<\infty,
}
and so on.
\end{proof}

\subsection{Gr\"onwall inequalities}

The following nonlinear Gr\"onwall inequality will be used to prove the fractional Gr\"onwall inequality (Lemma~\ref{gronwalllemma}). It is similar to some that have appeared before (e.g. \cite{willett1964nonlinear}) but we could not locate the version necessary for our purposes in the literature.

\begin{lem}\label{nonlineargronwall}
Let $f(t)$, $h_i(t)$, $b_i(t)$, and $a(t)$ be non-negative continuous functions on $[{t_0},\infty)$, with $b_i(t)$ and $a(t)$ non-decreasing. Suppose for some $p>1$,
\eqn{
f(t)\leq a(t)+b_1(t)\int_{t_0}^th_1(s)f(s)ds+(b_2(t)\int_{t_0}^th_2(s)f(s)^pds)^\frac1p.
}
Then for any constant $C_6$ large enough so that $C_6^{-1}+(C_6p)^{-\frac1p}<1$, we have
\eq{\label{nonlineargronwallclaim}
f(t)\leq C_6a(t)\exp\Big(C_6(b_1(t)\int_{t_0}^th_1(s)ds+b_2(t)\int_{t_0}^th_2(s)ds)\Big).
}
\end{lem}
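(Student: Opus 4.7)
The plan is a bootstrap (continuity) argument built around the candidate upper bound itself. Set
$$G(t) := C_6\, a(t)\exp\bigl(C_6\alpha(t)+C_6\beta(t)\bigr),\qquad \alpha(t) := b_1(t)\!\int_{t_0}^t\! h_1,\quad \beta(t) := b_2(t)\!\int_{t_0}^t\! h_2,$$
and let $T^* := \sup\{T\geq t_0 : f(s)\leq G(s)\text{ for all }s\in[t_0,T]\}$. Since $f(t_0)\leq a(t_0)<G(t_0)=C_6 a(t_0)$, we have $T^*>t_0$. The aim is to derive a contradiction from $T^*<\infty$ by proving $f(T^*)<G(T^*)$ strictly, which would extend the bound past $T^*$ by continuity.

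The substantive content reduces to two integral estimates, valid on any interval where $f\leq G$:
\begin{itemize}
\item[(i)] $\displaystyle b_1(t)\int_{t_0}^t h_1(s)\,G(s)\,ds \leq \tfrac{1}{C_6}\,G(t)$;
\item[(ii)] $\displaystyle \Bigl(b_2(t)\int_{t_0}^t h_2(s)\,G(s)^p\,ds\Bigr)^{1/p} \leq (C_6 p)^{-1/p}\,G(t)$.
\end{itemize}
Both are proved the same way. In (i), pull out $a(s)\leq a(t)$ and $\exp(C_6\beta(s))\leq\exp(C_6\beta(t))$, reducing the left-hand side to $G(t)\,e^{-C_6\alpha(t)}\cdot b_1(t)\!\int h_1(s)\,e^{C_6\alpha(s)}ds$. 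The monotonicity of $b_1$ gives the pointwise bound $\alpha(s)\leq \tilde\alpha(s):=b_1(t)\!\int_{t_0}^s h_1$, whose derivative is \emph{exactly} $b_1(t)h_1(s)$; thus $b_1(t)\!\int h_1\,e^{C_6\tilde\alpha(s)}ds = \tfrac{1}{C_6}(e^{C_6\alpha(t)}-1)$, which yields (i). Step (ii) is identical with $\beta$ in place of $\alpha$ and an extra factor of $p$ in the exponent, producing the factor $(C_6 p)^{-1/p}$ after taking the $p$-th root.

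Applying the hypothesis at $t=T^*$ together with (i), (ii), and $a(T^*)\leq G(T^*)/C_6$, we obtain
$$f(T^*)\leq\Bigl(\tfrac{2}{C_6}+(C_6 p)^{-1/p}\Bigr)G(T^*).$$
For $C_6$ chosen large enough (which is what the smallness hypothesis $C_6^{-1}+(C_6 p)^{-1/p}<1$ is guaranteeing, up to enlarging $C_6$ by a fixed factor to absorb the extra $1/C_6$), the bracket is strictly less than $1$, so $f(T^*)<G(T^*)$, contradicting maximality. Thus $T^*=\infty$ and \eqref{nonlineargronwallclaim} follows.

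The only genuinely delicate point is choosing the monotonicity direction correctly so that the exponential integrals in (i)–(ii) can be evaluated \emph{exactly} rather than merely bounded; specifically, one must use $b_i(s)\leq b_i(t)$ \emph{inside} the exponent (to majorize $\alpha,\beta$) while keeping $b_i(t)$ \emph{in front} of the integral, so that the integrand becomes an exact total derivative of an exponential. Once the ansatz $G$ is in hand, the remainder of the argument is formal.
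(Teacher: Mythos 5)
Your proof follows the same continuity/bootstrap strategy as the paper (the paper phrases it as showing the set of times where the bound holds is open, closed, and nonempty; you phrase it via $T^*=\sup$, which is the same thing), and the central mechanism — substituting the ansatz $G$ into the hypothesis and exploiting that the integrands become exact total derivatives of exponentials — is also identical. The one substantive difference is that you keep $\alpha$ and $\beta$ separate, whereas the paper first reduces to constant $a,b_i$ and then uses the \emph{combined} exponent $C_6\int(h_1+h_2)$ in the ansatz; this combined form makes $\int h_1\,e^{C_6\int(h_1+h_2)}\,ds\leq\frac{1}{C_6}(e^{C_6\int(h_1+h_2)}-1)$, and the freestanding $a$ then telescopes cleanly with the $-1$ to give $a\,e^{C_6\int(h_1+h_2)}=\frac{1}{C_6}G$. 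Your separated version, as you wrote it, throws away that $-1$, so the freestanding $a(T^*)$ costs you an extra $1/C_6$ and you land on $\bigl(\tfrac{2}{C_6}+(C_6p)^{-1/p}\bigr)G(T^*)$. Your proposed fix — ``enlarge $C_6$'' — does not actually repair this, since the lemma asserts the conclusion for \emph{every} $C_6$ with $C_6^{-1}+(C_6p)^{-1/p}<1$, and the constant $C_6$ also appears in the claimed upper bound itself, so you cannot trade it away. The correct fix within your decomposition is to retain the $-1$: one checks that
\[
a(T^*)+a(T^*)e^{C_6\beta(T^*)}\tfrac{1}{C_6}\bigl(e^{C_6\alpha(T^*)}-1\bigr)\;\leq\;\tfrac{1}{C_6}G(T^*),
\]
which reduces to $1-\tfrac1{C_6}e^{C_6\beta}\leq(1-\tfrac1{C_6})e^{C_6(\alpha+\beta)}$ and holds because the left side is $\leq 1-\tfrac1{C_6}$ and the right side is $\geq 1-\tfrac1{C_6}$. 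With that refinement your argument reproduces the lemma exactly.
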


\begin{proof}
First, assume $a$ and $b_i$ are constant. In this case the $b_i$ can be absorbed into the $h_i$. The proof is by a continuity argument. Consider the set $I\subset[{t_0},\infty)$ defined by
\eqn{
I=\big\{T\in[{t_0},\infty):\text{ \eqref{nonlineargronwallclaim} for all }t\in[{t_0},T]\big\}.
}
Then $I$ is clearly closed (by continuity) and non-empty (because of the trivial point $t={t_0}$). The lemma is proved once we show that $I$ is open in $[{t_0},\infty)$. Suppose we have $T\in I$. Then
\eqn{
f(T)&\leq a+\int_{t_0}^Th_1(s)f(s)ds+(\int_{t_0}^Th_2(s)f(s)^pds)^\frac1p\\
&\leq a+C_6a\int_{t_0}^Th_1(s)\exp(C_6\int_{t_0}^s(h_1+h_2))ds\\
&\quad+C_6a(\int_{t_0}^Th_2(s)\exp(pC_6\int_{t_0}^s(h_1+h_2))ds)^\frac1p\\
&\leq a+a\int_{t_0}^T\frac d{ds}\exp(C_6\int_{t_0}^s(h_1+h_2))ds\\
&\quad+(C_6p)^{-\frac1p}C_6a(\int_{t_0}^T\frac d{ds}\exp(pC_6\int_{t_0}^s(h_1+h_2))ds)^\frac1p\\
&=a\exp(C_6\int_{t_0}^T(h_1+h_2))+(C_6p)^{-\frac1p}C_6a(\exp(pC_6\int_{t_0}^T(h_1+h_2)ds)-1)^\frac1p\\
&\leq a(1+(C_6p)^{-\frac1p}C_6)\exp(C_6\int_{t_0}^T(h_1+h_2)).
}
By assumption, the coefficient is strictly less than $C_6$; thus by continuity, there is an $\epsilon>0$ such that $[T,T+\epsilon)\subset I$. We conclude that $I=[{t_0},\infty)$.

Now we consider the case of non-decreasing $a$ and $b_i$. Fix any $T>0$. We have
\eqn{
f(t)\leq a(T)+b_1(T)\int_{t_0}^th_1(s)f(s)ds+(b_2(T)\int_{t_0}^th_2(s)f(s)^pds)^\frac1p
}
for all $t\in[{t_0},T]$. We may apply the ``constant'' version of the lemma proved above to conclude 
\eqn{
f(t)\leq C_6a(T)\exp\Big(C_6(b_1(T)\int_{t_0}^th_1(s)ds+b_2(T)\int_{t_0}^th_2(s)ds)\Big)\quad \forall t\in[{t_0},T].
}
In particular, this holds at $t=T$ which proves the general claim.
\end{proof}

Gr\"onwall-type inequalities similar to the one below have appeared in the literature and are known as ``fractional Gr\"onwall'' inequalities (e.g., \cite{webb2021fractional}). The one required in \S\ref{semigroupsubsection} has an endpoint character. One can see, for instance, that the extra factor $\|s^\frac12g_2\|_{L^\infty}$ in the exponential reflects the failure of the Hardy--Littlewood--Sobolev inequality at the $L^\infty$ endpoint.

\begin{lem}\label{gronwalllemma}
If $a(t)$, $f(t)$, $g_1(t)$, and $g_2(t)$ are positive continuous functions on $[t_0,\infty)$ with $a(t)$ non-decreasing and
\eq{\label{gronwallhypothesis}
f(t)\leq a(t)+\int_{t_0}^{t}(g_1(s)+(t-s)^{-\frac12}g_2(s))f(s)ds\quad\forall t\geq t_0
}
then for any $p>2$,
\eqn{
f(t)\lesssim_p a(t)\exp\Big(O_p(\int_{t_0}^tg_1(s)ds+\|s^\frac12 g_2\|_{L^\infty([t_0,t])}^{p-2}\int_{t_0}^tg_2(s)^2ds)\Big)\quad \forall t\geq t_0.
}
\end{lem}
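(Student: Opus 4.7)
My plan is to reduce the statement to the nonlinear Gr\"onwall inequality of Lemma~\ref{nonlineargronwall} by absorbing the singular kernel $(t-s)^{-1/2}$ through a H\"older split that is chosen so as to produce exactly the quadratic dependence on $g_2$ and the extra $\|s^{1/2}g_2\|_{L^\infty}^{p-2}$ factor appearing in the conclusion. The key observation is that the endpoint failure of Hardy--Littlewood--Sobolev should be reflected precisely in this $L^\infty$-type correction.

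Concretely, I would write $g_2(s)f(s) = g_2(s)^{(p-2)/p}\cdot g_2(s)^{2/p}f(s)$ and apply H\"older with exponents $p/(p-1)$ and $p$ to get
\eqn{
\int_{t_0}^t (t-s)^{-1/2}g_2(s)f(s)\,ds \leq \Bigl(\int_{t_0}^t (t-s)^{-p/(2(p-1))}g_2(s)^{(p-2)/(p-1)}ds\Bigr)^{(p-1)/p}\Bigl(\int_{t_0}^t g_2(s)^2 f(s)^p ds\Bigr)^{1/p}.
}
Setting $M(t)\coloneqq\|s^{1/2}g_2\|_{L^\infty([t_0,t])}$, I use the pointwise bound $g_2(s)\leq M(t)s^{-1/2}$ only inside the $L^{p/(p-1)}$ factor, so that $g_2$ still appears to the power $2$ in the $L^p$ piece. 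This gives
\eqn{
\int_{t_0}^t (t-s)^{-1/2}g_2(s)f(s)\,ds \leq M(t)^{(p-2)/p}\Bigl(\int_{t_0}^t (t-s)^{-\alpha}s^{-\beta}\,ds\Bigr)^{(p-1)/p}\Bigl(\int_{t_0}^t g_2(s)^2 f(s)^p ds\Bigr)^{1/p}
}
with $\alpha=\frac{p}{2(p-1)}$ and $\beta=\frac{p-2}{2(p-1)}$. Note $\alpha+\beta=1$, so after the substitution $u=s/t$ the middle integral equals $\int_{t_0/t}^1(1-u)^{-\alpha}u^{-\beta}du\leq B(1-\alpha,1-\beta)$, a finite constant depending only on $p$ (this is where $p>2$ enters: it is precisely the condition that $\alpha<1$).

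Substituting this back into \eqref{gronwallhypothesis}, I obtain
\eqn{
f(t)\leq a(t)+\int_{t_0}^t g_1(s)f(s)\,ds+\Bigl(b_2(t)\int_{t_0}^t g_2(s)^2 f(s)^p\,ds\Bigr)^{1/p}
}
with $b_2(t)=C_p^{p}M(t)^{p-2}$ non-decreasing in $t$. This is exactly the structure required by Lemma~\ref{nonlineargronwall} (with $h_1=g_1$, $h_2=g_2^2$, $b_1=1$), which directly yields
\eqn{
f(t)\lesssim_p a(t)\exp\Bigl(O_p\Bigl(\int_{t_0}^t g_1(s)\,ds+M(t)^{p-2}\int_{t_0}^t g_2(s)^2\,ds\Bigr)\Bigr).
}

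The main (rather mild) obstacle is bookkeeping the three constraints that determine the H\"older split: the $f$-factor must appear to the power $p$ to match Lemma~\ref{nonlineargronwall}; the $g_2$-factor in the $L^p$ piece must have power $2$ to match the conclusion; and the remaining kernel exponent must be subcritical, which forces $p>2$ and makes the auxiliary Beta integral $t$-independent. Once this accounting is done, the proof is essentially bookkeeping plus invocation of the nonlinear Gr\"onwall lemma.
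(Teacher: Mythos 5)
Your proof is correct and is essentially the same as the paper's: the identical Hölder split of $g_2 f$ into $g_2^{1-2/p}\cdot g_2^{2/p}f$ with exponents $p'=p/(p-1)$ and $p$, the same use of the pointwise bound $g_2(s)\leq\|s^{1/2}g_2\|_{L^\infty}s^{-1/2}$ inside the $L^{p'}$ factor to produce a $t$-independent Beta integral (which requires $p>2$), and the same reduction to Lemma~\ref{nonlineargronwall} with $h_2=g_2^2$ and nondecreasing $b_2(t)\sim\|s^{1/2}g_2\|_{L^\infty([t_0,t])}^{p-2}$. Nothing further to add.
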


\begin{proof}
By H\"older's inequality,
\eqn{
\int_{t_0}^t(t-s)^{-\frac12}g_2(s)f(s)ds&\leq (\int_{t_0}^t(t-s)^{-\frac{p'}2}g_2(s)^{(1-\frac2p)p'}ds)^{\frac1{p'}}(\int_{t_0}^tg_2(s)^2f(s)^pds)^{\frac1p}.
}
The first factor on the right-hand side is controlled by
\eqn{
\|s^\frac12g_2\|_{L^\infty}^{1-\frac2p}(\int_{t_0}^t(t-s)^{-\alpha}s^{-\beta}ds)^{1-\frac1p}
}
where $\alpha=\frac p{2(p-1)}$ and $\beta=\frac{p-2}{2(p-1)}$. In general we have that when $\alpha\in(0,1)$ and $\alpha+\beta=1$ then
\eqn{
\int_{t_0}^t(t-s)^{-\alpha}s^{-\beta}ds\leq\frac\pi{\sin\pi\alpha}\quad\forall t\geq t_0
}
with equality when $t_0=0$. In summary,
\eqn{
f(t)\leq a(t)+\int_{t_0}^tg_1(s)f(s)ds+O_p(\|s^\frac12 g_2\|_{L^\infty([t_0,t])}^{1-\frac2p}(\int_{t_0}^tg_2(s)^2f(s)^pds)^\frac1p)
}
so we conclude by Lemma~\ref{nonlineargronwall}, using that the coefficient function $t\mapsto \|s^\frac12 g_2\|_{L^\infty([t_0,t])}^{1-\frac2p}$ is non-decreasing.
\end{proof}

\section{Multiplier Estimates}\label{multiplierappendix}

Proving the basic properties of Littlewood--Paley calculus is slightly more subtle on the torus than in the whole space due to the lack of scaling symmetry, so we include the proofs in the interest of being self-contained. The idea is that one can move between $\Td$ and $\Rd$ using the Poisson summation formula; see for instance the book of Schmeisser and Triebel \cite{SchmeisserTriebel1987}.

\subsection{A general estimate}

Let $a:\Zd\to\mathbb C$. We reserve the notation $\mathcal F$ and $\mathcal F^{-1}$ for the forward and inverse Fourier transforms on $\Td$; see \S\ref{notationsection}. We also recall the forward and inverse Fourier transforms on $\Rd$: for Schwartz functions $\varphi\in\mathcal S(\Rd;\mathcal C)$, we define
\eqn{
\mathcal F_{\Rd}\varphi(x)\coloneqq\int_\Rd\varphi(\xi)e^{-ix\cdot\xi}d\xi,\quad\mathcal F^{-1}_{\Rd}\varphi(x)\coloneqq(2\pi)^{-3}\int_\Rd\varphi(\xi)e^{ix\cdot\xi}d\xi.
}
Let us briefly recall a version of the Poisson summation formula. Let $f\in\mathcal S(\Rd;\mathbb C)$ be Schwartz and define the periodization
\eqn{
f_p:\Td\to\mathbb C;\quad f_p(x)\coloneqq\sum_{n\in(2\pi\mathbb Z)^3}f(x+n).
}
Then it is straightforward to compute the following relation between the Fourier series of $f_p$ and the Fourier transform of $f$:
\eq{\label{poissonsummationstartingpoint}
\hat f_p(\xi)=(2\pi)^{-3}\mathcal F_{\Rd}f(\xi)\quad\forall\xi\in\Zd.
}
By setting $\xi=0$ in \eqref{poissonsummationstartingpoint}, one obtains the standard version of the Poisson summation formula. More useful for us is the following observation: let $f=\mathcal F^{-1}_{\Rd}\varphi$ with $\varphi\in\mathcal S(\Rd;\mathbb C)$. Then \eqref{poissonsummationstartingpoint} implies that $\hat f_p(\xi)=(2\pi)^{-3}\varphi(\xi)$ for all $\xi\in\Zd$. Viewing $\varphi$ as a function $\Zd\to\mathbb C$, it follows that $\mathcal F^{-1}\varphi=(2\pi)^3f_p$. In other words,
\eq{\label{poissonsummationformula}
\mathcal F^{-1}\varphi(x)=(2\pi)^3\sum_{\xi\in\Zd}\mathcal F^{-1}_\Rd\varphi(x+2\pi \xi)\quad\forall x\in\Td.
}
Having established \eqref{poissonsummationformula}, we can proceed to prove the multiplier bounds.

\begin{lem}[General multiplier boundedness]\label{generalmultiplierlemma}
If $m\in \mathcal S(\Rd;\mathbb C)$, $1\leq p\leq q\leq\infty$, and $n\geq 4$, then
\eqn{
\|m(\grad/i)f\|_{L^q(\Td)}&\lesssim_{p,q,n}C_n(m)\|f\|_{L^p(\Td)}
}
for all $f\in L^p(\Td;\mathbb R)$ where
\eqn{
C_n(m)&=\|\mathcal F_{\Rd}^{-1}m\|_{L^1(\Rd)}^{1+1/q-1/p}\max(\|m\|_{L^1(\Rd)},\|m\|_{L^1(\Rd)}^{1-3/n}\|\grad^nm\|_{L^1(\Rd)}^{3/n})^{1/p-1/q}.
}
\end{lem}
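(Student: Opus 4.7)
\medskip

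\textbf{Proof proposal.} The plan is to reduce the estimate to a Young convolution bound on $\Td$ and then interpolate two endpoint estimates on the associated periodic kernel.

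First, I would write $m(\grad/i)$ as convolution with a periodic kernel. Unwinding the definitions,
\eqn{
m(\grad/i)f(x)=\fint_{\Td}f(y)K(x-y)\,dy,\qquad K(z):=\sum_{\xi\in\Zd}m(\xi)e^{iz\cdot\xi},
}
so $m(\grad/i)f=K\ast_{\Td}f$, where $\ast_{\Td}$ denotes convolution with the normalized measure $\fint$. Young's inequality on $\Td$ then gives
\eqn{
\|m(\grad/i)f\|_{L^q(\Td)}\leq\|K\|_{L^r(\Td)}\|f\|_{L^p(\Td)},\qquad \tfrac1r=1+\tfrac1q-\tfrac1p.
}
By log-convexity of the $L^r(\Td)$ scale,
\eqn{
\|K\|_{L^r(\Td)}\leq\|K\|_{L^1(\Td)}^{1/r}\|K\|_{L^\infty(\Td)}^{1-1/r},
}
so it suffices to prove $\|K\|_{L^1(\Td)}\lesssim\|\mathcal F_{\Rd}^{-1}m\|_{L^1(\Rd)}$ and $\|K\|_{L^\infty(\Td)}\lesssim\max(\|m\|_{L^1(\Rd)},\|m\|_{L^1(\Rd)}^{1-3/n}\|\grad^nm\|_{L^1(\Rd)}^{3/n})$.

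For the $L^1$ bound, I would apply the Poisson summation identity \eqref{poissonsummationformula} (with $\varphi=m$) to obtain $K(z)=(2\pi)^3\sum_{n\in\Zd}\mathcal F_{\Rd}^{-1}m(z+2\pi n)$ as functions on $\Td$. Fubini then gives
\eqn{
\|K\|_{L^1(\Td)}\leq(2\pi)^3\fint_{\Td}\sum_{n\in\Zd}|\mathcal F_{\Rd}^{-1}m(z+2\pi n)|\,dz=\int_{\Rd}|\mathcal F_{\Rd}^{-1}m(z)|\,dz.
}
For the $L^\infty$ bound I would use the trivial inequality $\|K\|_{L^\infty(\Td)}\leq\sum_{\xi\in\Zd}|m(\xi)|$ and estimate this lattice sum by a scale-invariant Sobolev embedding on balls of radius $r\in(0,1/2]$: since $n\geq4>3$, Morrey's embedding after rescaling yields
\eqn{
|m(\xi)|\lesssim r^{-3}\|m\|_{L^1(B(\xi,r))}+r^{n-3}\|\grad^nm\|_{L^1(B(\xi,r))}.
}
For $r\leq1/2$ the balls $B(\xi,r)$ are pairwise disjoint, so summing over $\xi\in\Zd$ gives
\eqn{
\sum_{\xi}|m(\xi)|\lesssim r^{-3}\|m\|_{L^1(\Rd)}+r^{n-3}\|\grad^nm\|_{L^1(\Rd)}.
}

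The final step is to optimize $r$. The unconstrained minimizer is $r_\ast=(\|m\|_{L^1}/\|\grad^nm\|_{L^1})^{1/n}$, which yields the interpolated bound $\|m\|_{L^1}^{1-3/n}\|\grad^nm\|_{L^1}^{3/n}$ whenever $r_\ast\leq1/2$. In the opposite regime $\|m\|_{L^1}\gtrsim\|\grad^nm\|_{L^1}$, I would set $r=1/2$, which produces an upper bound of order $\|m\|_{L^1}+\|\grad^nm\|_{L^1}\lesssim\|m\|_{L^1}$; in this regime the interpolated quantity is already $\leq\|m\|_{L^1}$, so in both cases the bound collapses to the stated maximum. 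The main (mild) obstacle is justifying the scale-invariant Sobolev embedding on a ball for $W^{n,1}$ with $n\geq4$ without picking up intermediate derivative norms, which can be done either by Morrey's embedding combined with the Gagliardo--Nirenberg interpolation $\|\grad^km\|_{L^1}\lesssim\|m\|_{L^1}^{1-k/n}\|\grad^nm\|_{L^1}^{k/n}$ for $0<k<n$, followed by Young's inequality to absorb intermediate terms into $\|m\|_{L^1}+\|\grad^nm\|_{L^1}$, or by applying the embedding directly to $\chi m$ for a suitable cutoff $\chi$ supported in a slightly enlarged ball. Combining the two endpoint bounds through interpolation with exponents $1/r=1+1/q-1/p$ and $1-1/r=1/p-1/q$ reproduces $C_n(m)$ exactly.
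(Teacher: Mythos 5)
Your argument is correct and reaches the stated bound, but it reaches the $L^\infty$ kernel estimate by a genuinely different route than the paper. The overall skeleton is the same: realize $m(\grad/i)$ as convolution with the periodic kernel $K=\mathcal F^{-1}m$, apply Young's inequality, and then interpolate $\|K\|_{L^r(\Td)}\leq\|K\|_{L^1}^{1/r}\|K\|_{L^\infty}^{1-1/r}$; the $L^1$ bound by Poisson summation is also the same. The difference is in the $L^\infty$ bound. The paper works on the physical side: it writes $K(x)=(2\pi)^3\sum_k\mathcal F^{-1}_\Rd m(x+2\pi k)$, applies Riemann--Lebesgue to $m$ and $\grad^n m$ to obtain $|\mathcal F^{-1}_\Rd m(z)|\lesssim\min(\|m\|_{L^1},|z|^{-n}\|\grad^n m\|_{L^1})$, and splits the lattice sum at the crossover radius (or at a fixed radius when $\|\grad^n m\|_{L^1}\leq\|m\|_{L^1}$), which gives the maximum directly. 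You instead stay on the Fourier side: you bound $\|K\|_{L^\infty}$ by $\sum_\xi|m(\xi)|$, estimate each sample $|m(\xi)|$ by a scaled $W^{n,1}\hookrightarrow L^\infty$ Sobolev embedding on disjoint balls $B(\xi,r)$ with $r\leq\tfrac12$, sum using disjointness, and then optimize over $r$ (truncating at $r=\tfrac12$ in the degenerate regime). Both yield the same constant $C_n(m)$. The trade-off is that the paper's approach uses only the elementary Riemann--Lebesgue estimate and keeps all the work at the kernel level, whereas your approach requires the ball-Sobolev inequality in the "extreme norms only" form $\|g\|_{L^\infty(B)}\lesssim\|g\|_{L^1(B)}+\|\grad^n g\|_{L^1(B)}$, which as you correctly flag is not the off-the-shelf Morrey statement and needs a Gagliardo--Nirenberg or extension argument to absorb the intermediate derivatives. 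Filling that in is routine but is an extra step that the paper's proof avoids; on the other hand, your approach makes transparent that the quantity being controlled is really the discrete Riemann sum $\sum_\xi|m(\xi)|$, which can be conceptually convenient.
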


\begin{proof}
By density, we may assume without loss of generality that $f\in C^\infty(\Td;\mathbb R)$.

The operator $m(\grad/i)$ can be expressed as convolution on $\Td$ with the kernel
\eqn{
K(x)=\mathcal F^{-1}m(x)=(2\pi)^3\sum_{k\in\Zd}\mathcal F^{-1}_{\Rd}m(x+2\pi k)
}
using \eqref{poissonsummationformula}. Recalling that we identify $\Td$ with $[0,2\pi]^3$,
\eqn{
\|K\|_{L^1(\Td)}&\leq(2\pi)^3\sum_{k\in\Zd}\|\mathcal F_{\Rd}^{-1}m(x+2\pi k)\|_{L^1(\Td)}=(2\pi)^3\|\mathcal F_{\Rd}^{-1}m\|_{L^1(\Rd)}.
}
Moreover, by the Riemann--Lebesgue lemma,
\eqn{
|K(x)|&\leq(2\pi)^3\sum_{k\in\Zd}|\mathcal F_{\Rd}^{-1}m(x+2\pi k)|\\
&\lesssim\sum_{k\in\Zd}\min(\|m\|_{L^1(\Rd)},|x+2\pi k|^{-n}\|\grad^nm\|_{L^1(\Rd)}).
}
If $\|\grad^nm\|_{L^1(\Rd)}> \|m\|_{L^1(\Rd)}$, then one splits the sum based on which item in the minimum dominates to obtain $\|m\|_{L^1(\Rd)}^{1-3/n}\|\grad^nm\|_{L^1(\Rd)}^{3/n}$. Otherwise, one can estimate at $|k|\leq10$ using $\|m\|_{L^1}$ and at $|k|>10$ with the other bound to obtain $\|m\|_{L^1}+\|\grad^nm\|_{L^1}\lesssim \|m\|_{L^1}$.

The lemma follows by Young's inequality and interpolation,
\eqn{
\|K\|_{L^r}\leq\|K\|_{L^1}^{1/r}\|K\|_{L^\infty}^{1-1/r}
}
where $\frac1r=1+\frac1q-\frac1p$.
\end{proof}
We are now equipped to prove the Lemmas~\ref{bernsteininequality}--\ref{heatlplemma}.

\subsection{Proof of Lemma~\ref{bernsteinlemma}}

Recall from \S\ref{littlewoodpaleysection} the symbol $\pi(\xi/N)$ for the Fourier multiplier $P_N$. Then we have
\eqn{
\|\mathcal F^{-1}_\Rd(\pi(\cdot/N)m)\|_{L^1(\Rd)}&=N^d\|(\mathcal F^{-1}_\Rd(m(N\cdot)\pi))(N\cdot)\|_{L^1(\Rd)}=\|\mathcal F^{-1}_{\Rd}(m(N\cdot)\pi)\|_{L^1(\Rd)}
}
by homogeneity. Recall that by definition, $\pi(\cdot/N)$ is supported in an annulus at spatial scale $\sim N$. By the Riemann--Lebesgue lemma, H\"older, and \eqref{multiplierassumption}, we have both the $L^\infty$ bound
\eqn{
\|\mathcal F^{-1}_{\Rd}(m(N\cdot)\pi)\|_{L^\infty(\Rd)}&\lesssim \|m(N\cdot)\pi\|_{L^1}\lesssim AN^{\alpha}
}
and the weighted $L^\infty$ bound
\eqn{
\||x|^{10}\mathcal F^{-1}_{\Rd}(m(N\cdot)\pi)\|_{L^\infty(\Rd)}&\lesssim \|\grad^{10}(m(N\cdot)\pi)\|_{L^1}\lesssim AN^{\alpha}.
}
One may easily combine these pointwise estimates to control the $L^1$ norm and obtain
\eqn{\|\mathcal F^{-1}_\Rd(\pi(\cdot/N)m)\|_{L^1(\Rd)}\lesssim AN^\alpha.}
Finally, one estimates
\eqn{
\|\grad^n(\pi(\cdot/N)m)\|_{L^1(\Rd)}&=AN^{\alpha-n}\|1\|_{L^1(\supp\pi(\cdot/N))}=O(AN^{3+\alpha-n})
}
and the claim follows by Lemma~\ref{generalmultiplierlemma}.\null\hfill\qedsymbol

\subsection{Proof of Lemma~\ref{heatlplemma}}

With $m$ the multiplier for $P_Ne^{t\Delta}$, it is straightforward to obtain the bound
\eqn{
|\mathcal F^{-1}_{\Rd}m(x)|&\lesssim \min(1,\Big(\frac{1+N^2t}{|x|}\Big)^{10})\exp(-N^2t/O(1))
}
which implies $\|\mathcal F^{-1}_{\Rd}m\|_{L^1(\Rd)}\lesssim\exp(-N^2t/O(1))$. Then \eqref{heatlittlewoodpaleymultiplierestimate} follows by Lemma~\ref{generalmultiplierlemma}.

Let us turn to \eqref{stokesequationsolutionoperatorestimate}, first assuming that $t\leq N^{-2}$. Define the multiplier
\eqn{
m(\xi)=e^{-t|\xi|^2}\pi_N(\xi)b(\xi).
}
We easily have
\eqn{
|\grad^nm(\xi)|\lesssim |\xi|^\alpha t^{\frac n2}(t^\frac12|\xi|)^{-n}\mathbf1_{|\xi|\sim N}
}
for all $n\geq0$ which leads to
\eqn{
\|\grad ^nm(\xi)\|_{L^1(\Rd)}&\lesssim N^{3+\alpha-n}
}
and thus
\eqn{
\|\mathcal F^{-1}_\Rd m\|_{L^1(\Rd)}&\lesssim N^\alpha
}
for this set of times. On the other hand, when $t>N^{-2}$,
\eqn{
|\grad ^nm(\xi)|&\lesssim |\xi|^\alpha t^{\frac n2}\exp(-N^2t/O(1))\mathbf1_{|\xi|\sim N}.
}
Note that we can remove all factors of $(t^\frac12|\xi|)$ by enlarging the $O(1)$ in the exponential, as needed. It follows that
\eqn{
\|\mathcal F^{-1}m\|_{L^1(\Rd)}&\lesssim t^{\frac32}N^{3+\alpha}\exp(-N^2t/O(1))\lesssim N^\alpha\exp(-N^2t/O(1)).
}
Thus we conclude the desired bounds for all times by Lemma~\ref{generalmultiplierlemma}.

Next, if $\hat f(0)=0$ and $t>1$, we can decompose $f=\sum_{N\geq1}P_Nf$ and directly obtain \eqref{heatmultiplierestimate} from \eqref{heatlittlewoodpaleymultiplierestimate}. If instead $t\leq1$, that approach would not lead to the optimal estimate because there is a $O(1)$ contribution for each of the modes between $N=1$ and $N\sim t^{-\frac12}$, leading to a logarithmic loss. Instead we decompose
\eqn{
\|e^{t\Delta} f\|_{L^\infty(\Td)}&\lesssim \|P_{1\leq\cdot\leq t^{-\frac12}}e^{t\Delta}f\|_{L^\infty}+\sum_{N>t^{-\frac12}}\|P_Ne^{t\Delta}f\|_{L^\infty}.
}
The sum over large $N$ can be controlled by $O(1)$ using \eqref{heatlittlewoodpaleymultiplierestimate}. Then one can see the operator $P_{1\leq\cdot\leq t^{-1/2}}e^{t\Delta}$ is uniformly bounded on $L^\infty$ by appealing to Lemma~\ref{generalmultiplierlemma}.

Finally, \eqref{heatandderivativemultiplierestimate} follows from \eqref{heatlittlewoodpaleymultiplierestimate} in a similar (more straightforward) way. Because $\alpha>0$, the low modes are summable even when $t$ is small.\null\hfill\qedsymbol

\bibliographystyle{abbrv}
\bibliography{references}

\end{document}